\DeclareMathAlphabet{\mathpzc}{OT1}{pzc}{m}{it}
\newcommand{\EO}[1]{{\color{black}#1}}
\newtheorem{remark}[theorem]{Remark}
\numberwithin{equation}{section}
\newcommand{\calL}{{\mathcal L}}
\newcommand{\calQ}{{\mathcal Q}}
\newcommand{\calJ}{{\mathcal J}}
\newcommand{\pe}{\mathscr{P}}
\newcommand{\ee}{\mathscr{E}}
\newcommand{\calR}{{\mathcal R}}
\newcommand{\calG}{{\mathcal{G}}}
\newcommand{\bmr}{{\boldsymbol r}}
\newcommand{\slope}{\mathfrak{s}}
\newcommand{\bA}{{\mathbf A}}
\def\limn{\lim_{n\to\infty}}
\def\limsupn{\limsup_{n\to\infty}}
\def\liminfn{\liminf_{n\to\infty}}
\def\C{\mathcal{C}}
\title{
A reaction coefficient identification problem for fractional diffusion\thanks{
The research of EO was supported in part by CONICYT through project FONDECYT 11180193. TNTQ gratefully acknowledges support of the University of Goettingen, Goettingen, Lower Saxony, Germany.}}
\author{
Enrique Ot\'arola\thanks{Departamento de Matem\'atica, Universidad T\'ecnica Federico Santa Mar\'ia, Valpara\'iso, Chile (\texttt{enrique.otarola@usm.cl}).}
\and
Tran Nhan Tam Quyen\thanks{Institute for Numerical and Applied Mathematics, University of Goettingen, Goettingen, Germany (\texttt{quyen.tran@uni-goettingen.de}).}
}
\date{Draft version of \today.}
\begin{document}

\maketitle
\begin{abstract}
We analyze a reaction coefficient identification problem for the spectral fractional powers of a symmetric, coercive, linear, elliptic, second--order operator in a bounded domain $\Omega$. We realize fractional diffusion as the Dirichlet-to-Neumann map for a nonuniformly elliptic problem posed on the semi--infinite cylinder $\Omega \times (0,\infty)$. We thus consider an equivalent coefficient identification problem, where the coefficient to be identified appears explicitly. We derive existence of local solutions, optimality conditions, regularity estimates, and a rapid decay of solutions on the extended domain $(0,\infty)$. The latter property suggests a truncation that is suitable for numerical approximation. We thus propose and analyze a fully discrete scheme that discretizes the set of admissible coefficients with piecewise constant functions. The discretization of the state equation relies on the tensorization of a first--degree FEM in $\Omega$ with a suitable $hp$--FEM in the extended dimension. We derive convergence results and obtain, under the assumption that in neighborhood of a local solution the second derivative of the reduced cost functional is coercive, a priori error estimates.
\end{abstract}

\begin{keywords}
coefficient identification problems, fractional diffusion, nonlocal operators, finite elements, error estimates.
\end{keywords}

\begin{AMS}
35J70,       
35R11,	     
35R30,       
65M12,       
65M15,       
65M60.	     
\end{AMS}

\section{Introduction}
\label{sec:introduccion}
In recent times, it has become evident that many of the assumptions that lead to classical models of diffusion are not always satisfactory or even realistic: memory, heterogeneity or a multiscale structure might violate them. In such a scenario, the assumption of locality does not hold and to describe diffusion one needs to resort to nonlocal operators; classical integer order differential operators fail to provide an accurate description. This is specially the case when long range (i.e., nonlocal) interactions are to be taken into consideration. Different models of diffusion have been proposed, fractional diffusion being one of them. An incomplete list of problems where fractional diffusion appears includes finance \cite{MR2064019}, turbulent flow \cite{chen23}, quasi--geostrophic flows models \cite{MR2680400,Kiselev2007}, models of anomalous thermoviscous behaviors \cite{doi:10.1121/1.1646399}, peridynamics \cite{MR3023366,MR3618711}, and imaging sciences \cite{doi:10.1137/070698592,Gatto2015}. Even when having a mathematical model based on fractional diffusion, in a practical setting could occur that such a model is not exact: coefficients or source terms may be subject to uncertainty or unknown. In addition, data or a priori information may be available: we may have a sparse and/or noisy measurement of the state of the system or of an output of interest that we would like to match and/or a priori information of some model coefficients. In such cases, one can resort to the solution of an inverse or control problem to recover such parameters and define a more accurate, data-driven, mathematical model. All these considerations motivate, the need to, on the basis of physical observations, identify coefficients in a fractional diffusion model.

In this work we shall be interested in the analysis of a coefficient identification problem for certain fractional powers of symmetric, coercive, self--adjoint, second order differential operators in bounded domains with homogeneous Dirichlet boundary conditions. To make matters precise, for $d \in \{ 2, 3\}$, we let $\Omega \subset \R^d$ be an open and bounded domain with Lipschitz boundary $\partial \Omega$. We define
\begin{equation}\label{eq:calL}
\calL w := - \DIV_{x'}( A \GRAD_{x'} w ) + qw,
\end{equation}
supplemented with homogeneous Dirichlet boundary conditions. The diffusion matrix $A \in C^{0,1}(\Omega,\GL(\R^d))$ is symmetric and uniformly positive definite and the reaction term $q$ belongs to the following set of admissible coefficients:
\begin{equation}\label{23-2-f2}
\calQ:= \left \{ q \in L^{\infty}(\Omega) ~|~ 0 \leq q(x) \leq \bar{q} \textrm{ a.e. in } \Omega \right \}, \quad \bar q > 0.
\end{equation}
For $s \in (0,1)$, we denote by $\calLs$ the spectral fractional powers of the operator $\calL$.

Given $s \in (0,1)$ and a fixed function $f: \Omega \rightarrow \mathbb{R}$, we shall be concerned with the analysis of the problem of identifying the reaction coefficient $q\in \mathcal{Q}$ in the Dirichlet problem for fractional diffusion
\begin{equation}
\label{eq:fractional_diffusion}
    \calL^s u = f \text{  in  } \Omega
\end{equation}
from the observation data $z_{\delta} \in L^2(\Omega)$ of the exact solution $u^\dag$ satisfying the deterministic noise model
\begin{equation}\label{23-2-f1}
\| u^\dag -z_\delta \|_{L^2(\Omega)} \leq \delta,
\end{equation} 
where $\delta>0$ denotes the level of measurement error. For $f \in \Hsd$, a weak formulation for \eqref{eq:fractional_diffusion} reads: Find $u \in \Hs$ such that
$
\langle \calL^s u, v \rangle = \langle f , v  \rangle
$
for all $v \in \Hs$. This formulation admits a unique solution (see section \ref{sec:notation} for notation and details). 

We mention that the use of $L^2$-observations of the exact state is quite popular in computational inverse problems for elliptic partial differential equations (PDEs). In practice, the observation is measured at certain points of the domain $\Omega$ and an interpolation process is needed to derive distributed observations. Such observation assumptions have been used by many authors; see, for instance \cite{MR1092646,MR852507,0266-5611-26-12-125014,MR2885597,MR2594377,MR3788158,MR1230716,MR2235638}.
As a first step, and in view of several technical difficulties that appears in the analysis and approximation of \eqref{eq:fractional_diffusion}, we assume that observations of $u^\dag$ are available in $\Omega$. We briefly comment about the case of observations being available in a subdomain $\Omega_{\mathrm{obs}}\subset\Omega$ in Remark \ref{rm:extensions} below.

To solve the proposed identification problem in a stable manner,
we will utilize the standard output least squares method with Tikhonov regularization \cite{Banks:1989:ETD:575635,MR2554448,MR1408680,MR2963507}. In fact, for estimating the coefficient $q$ in \eqref{eq:fractional_diffusion} from the observations $z_\delta$ of the exact solution $u^\dag$, we will
invoke
the following cost functional
\begin{equation}\label{eq:functional}
J_{\delta,\rho}(q) := \frac{1}{2} \| U(q) - z_{\delta} \|_{L^2(\Omega)}^2 + \frac{\rho}{2} \| q - q^* \|_{L^2(\Omega)}^2,
\end{equation}
where $U$ denotes the so--called coefficient--to--solution operator, which associates to an element $q \in \calQ$ a unique weak solution $u =: U(q)$ of problem \eqref{eq:fractional_diffusion}. In \eqref{eq:functional}, $q^*$ corresponds to an a priori estimate of the coefficient $q$ to be identified and $\rho>0$ denotes the so--called regularization parameter. We will thus consider a minimizer of the optimization problem
\begin{equation}
\label{eq:min_fractional}
\min_{q \in \calQ} J_{\delta,\rho}(q) 
\end{equation}
as a reconstruction. One of the main difficulties in the analysis and design of solution techniques for problem \eqref{eq:min_fractional} is that $\calL^s$ is a \emph{nonlocal} operator \cite{MR3469920,CS:11,CT:10,CS:07,MR3489634,CDDS:11}. We must immediately notice that the coefficient $q$ to be identified does not appear explicitly neither in the strong nor the weak formulation of problem \eqref{eq:fractional_diffusion}. The coefficient $q$ acts, however, modifying the spectrum of the differential operator $\calL$ and thus the definition of the fractional powers $\calLs$; see definition \eqref{def:second_frac} below. As a consequence, the analysis of the coefficient identification problem \eqref{eq:min_fractional} is intricate.

The mathematical analysis of fractional diffusion has been one of the most studied topics in the past decade \cite{MR3469920,CS:11,CT:10,CS:07,MR2680400,CDDS:11,NOSChina,NOS3,MR3445279,MR3177769}. A breakthrough in the theory, that allows for the localization of $\calL^s$, is due to Caffarelli and Silvestre \cite{CS:07}. When $\Omega = \R^d$ and $\calL=-\Delta$, \ie in the case when $\calL$ coincides with the Laplace operator in $\R^d$, Caffarelli and Silvestre proved that any power $s \in (0,1)$ of the fractional Laplacian $(-\Delta)^s$ can be realized as the Dirichlet--to--Neumann map for an extension problem posed on the upper half--space $\R^{d+1}_+ := \R^d\times(0,\infty)$ \cite{CS:07}. Such an extension problem involves a nonuniformly but \emph{local} elliptic PDE. This result was later adapted in \cite{CT:10,CDDS:11,ST:10} to bounded domains $\Omega$ and more general operators, thereby obtaining a  extension problem posed on the semi--infinite cylinder $\C := \Omega \times (0,\infty)$. The latter corresponds to the following \emph{local} boundary value problem
\begin{equation}\label{eq:alpha_harm_intro}
  \begin{dcases}
   -\DIV\left( y^\alpha \bA \nabla \ue \right) + q y^{\alpha} \ue= 0  & \text{ in } \C, 
    \\
    \ue = 0  &\text{ on } \partial_L \C, \\
    \partial_{\nu^\alpha} \ue = d_s f  &\text{ on } \Omega \times \{0\},
  \end{dcases}
\end{equation}
where $\bA = \diag \{A,1\} \in C^{0,1}(\C, \GL(\R^{d+1}))$ and $\partial_L \C := \partial \Omega \times (0,\infty)$ denotes the lateral boundary of $\C$. In addition, in \eqref{eq:alpha_harm_intro}, $d_s$ denotes a positive normalization constant given by $d_s: = 2^{1-2s} \Gamma(1-s)/\Gamma(s)$ and the parameter $\alpha$ is defined as $\alpha := 1-2s \in (-1,1)$ (cf.\ \cite{CT:10,CS:07,CDDS:11}). 
The conormal exterior derivative of $\ue$ at $\Omega \times \{ 0 \}$ is defined by
\begin{equation}
\label{def:lf}
\partial_{\nu^\alpha} \ue = -\lim_{y \rightarrow 0^+} y^\alpha \ue_y,
\end{equation}
where the limit is understood in the distributional sense. We will call $y \in (0,\infty)$ the \emph{extended variable} and call the dimension $d+1$ in $\R_+^{d+1}$ the \emph{extended dimension}. With the extension $\ue$ at hand, the fractional powers of $\calL$ in \eqref{eq:fractional_diffusion} and 
the Dirichlet--to--Neumann operator of problem \eqref{eq:alpha_harm_intro} are related by
\begin{equation} \label{eq:identity}
  d_s \calL^s u = \partial_{\nu^\alpha} \ue \text{ in } \Omega.
\end{equation}

Motivated by applications to tomography and related techniques, the study of parameter identification problems in PDEs
has received considerable attention over the past 50 years. A rather incomplete list of problems where parameter identification problems appear includes  modern medical imaging modalities, aquifer analysis, geophysical prospecting and pollutant detection. We refer the interested reader to \cite{Banks:1989:ETD:575635,benning_burger_2018,Colton,MR2963507,doi:10.1137/1.9780898717921} for a survey. In particular, the problem of identifying the reaction coefficient in local and elliptic equations has been extensively studied; we refer the reader to \cite{0266-5611-26-12-125014,MR3788158} and references therein. In contrast to these advances, and to the best of our knowledge, this is the first work addressing the study of a reaction coefficient identification problem for fractional diffusion.

We provide a comprehensive treatment for a reaction coefficient identification problem for the spectral fractional powers of a symmetric, coercive, linear, elliptic, second--order operator in a bounded domain $\Omega$. We overcome the nonlocality of fractional diffusion by using the results of Caffarelli and Silvestre \cite{CS:07}. We realize \eqref{eq:fractional_diffusion} by \eqref{eq:alpha_harm_intro} and propose an equivalent identification problem. As a consequence, standard variational techniques are applicable since, in contrast to \eqref{eq:fractional_diffusion}, the reaction coefficient $q$ appears explicitly in the weak formulation of \eqref{eq:alpha_harm_intro}.  This is one of the highlights of our work. We rigorously derive existence and differentiability results, optimality conditions and regularity estimates. We also present a numerical scheme, and prove that there exists a sequence of local minima that converges to a local and exact solution for all values of $s$.

Our presentation is organized as follows. The notation and  functional setting is described in section \ref{sec:notation}, where we also briefly describe, in \S \ref{sub:fracLaplace}, the definition of spectral fractional diffusion and, in \S \ref{sub:CaffarelliSilvestre}, its localization via the Caffarelli--Silvestre extension property. In section \ref{sec:extended} we introduce the \emph{extended} identification problem and prove that is equivalent to \eqref{eq:min_fractional}. In addition, we derive the existence of local solutions, study differentiability properties for the underlying coefficient--to--solution operator, analyze optimality conditions and derive regularity estimates. In section \ref{sec:truncated}, we begin the numerical analysis of our problem. We introduce a truncation of the state equation and a \emph{truncated} identification problem. We derive approximation properties of its solution. In section \ref{subsec:fems} we briefly recall the finite element scheme of \cite{BMNOSS:17} that approximates the solution to \eqref{eq:alpha_harm_intro}. In section \ref{sub:fully_discrete_scheme} we introduce a fully discrete scheme for the truncated identification problem and derive convergence of discrete solutions when the regularization parameter converges to zero. Finally, in section \ref{sub:a_priori}, and under the assumption that in neighborhood of a local solution the second derivative of the reduced cost functional is coercive, we derive convergence results and a priori error estimates for the proposed fully discrete scheme.

\section{Notation and preliminaries}
\label{sec:notation}
We adopt the notation of \cite{NOS,Otarola}. Throughout this work $\Omega$ is a convex polytopal subset of $\mathbb{R}^d$, $d \in \{ 2, 3\}$, with boundary $\partial \Omega$. Besides the semi--infinite cylinder $\C=\Omega \times (0,\infty)$, we introduce the truncated cylinder with base $\Omega$ and height $\Y$ as $\C_{\Y}:= \Omega \times (0,\Y)$ and its lateral boundary $\partial_L\C_{\Y}:= \partial \Omega \times (0,\Y).$ Since the extended variable $y$ will play a special role in the analysis that we will perform, throughout the text, points $x \in \C = \Omega \times (0,\infty) \subset \R^{d+1}$ will be written as 
\[
 x = (x',y), \quad x' \in \Omega\subset \R^d, \quad y \in (0,\infty).
\]

Whenever $X$ is a normed space we denote by $ \| \cdot \|_{X}$ its norm and by $X'$ its dual. For normed spaces $X$ and $Y$ we write $X \hookrightarrow Y$ to indicate continuous embedding.

By $a \lesssim b$ we mean $a \leq Cb$  with a constant $C$ that neither depends 
on $a$, $b$ nor the discretization parameters. The notation $a \sim b$ signifies
$a \lesssim b\lesssim a$. The value of $C$ might change at each occurrence. 

Finally, since we assume $\Omega$ to be convex, in what follows we will make use, without explicit mention, of the following regularity result \cite{Grisvard}:
\begin{equation}
\label{eq:Omega_regular}
\| w \|_{H^2(\Omega)} \lesssim \| \mathcal{L} w \|_{L^2(\Omega)} \quad \forall w \in H^2(\Omega) \cap H^1_0(\Omega).
\end{equation}
\subsection{Fractional powers of elliptic operators}
\label{sub:fracLaplace}
In this section, we invoke spectral theory \cite{BS,Kato} and define the \emph{spectral fractional powers} of the elliptic operator $\calL$. 
To accomplish this task, we begin by noticing that in view of the assumptions on $A$ and $q$, the operator $\calL$ induces the following inner product on $H^1_0(\Omega)$:
\begin{equation}
\label{eq:blfOmega} 
\blfa{\Omega}: H^1_0(\Omega) \times H^1_0(\Omega) \rightarrow \mathbb{R},
\qquad
\blfa{\Omega}(w,v) = \int_\Omega  \left(A \nabla w \cdot \nabla v + qwv \right)  \diff x'.
\end{equation} 
In addition, $\calL: H^1_0(\Omega) \rightarrow H^{-1}(\Omega)$ defined by $u \mapsto  \blfa{\Omega}(u,\cdot)$ is an isomorphism. The eigenvalue problem 
\[
(\lambda,\phi) \in {\mathbb R} \times H^1_0(\Omega) \setminus \{0\}: \quad
 \blfa{\Omega}(\phi,v) = \lambda ( \phi, v)_{L^2(\Omega)}
 \quad \forall v \in H^1_0(\Omega)
\]
has a countable collection of solutions $\{\lambda_k, \varphi_k \}_{k \in \mathbb{N}} \subset \R^+ \times H_0^1(\Omega)$, where $\{\varphi_k\}_{k \in \mathbb{N}}$ is an orthonormal basis of 
$L^2(\Omega)$ and an orthogonal basis of $(H_0^1(\Omega), \blfa{\Omega}(\cdot,\cdot))$. The real sequence of eigenvalues $\{\lambda_k\}_{k \in \mathbb{N}}$ is enumerated in increasing order, counting multiplicities and it accumulates at infinite.

With these ingredients at hand, we define, for $s \ge 0$, the fractional Sobolev space
\begin{equation}
\label{def:Hs}
  \Hs = \left\{ w = \sum_{k=1}^\infty w_k \varphi_k ~\big|~ \| w \|_{\Hs} := \left(
  \sum_{k=1}^{\infty} \lambda_k^s w_k^2 \right)^{\frac12} < \infty \right\}.
\end{equation}

\begin{remark}[the equivalent space $\mathcal{H}^s(\Omega)$]\rm
Let us denote by $\{ \mu_k,\phi_k\}_{k \in \mathbb{N}}$ the eigenpairs of the Dirichlet Laplace operator in the bounded domain $\Omega$. Notice that such a classic operator is obtained upon setting $A \equiv I$ and $q \equiv 0$ in \eqref{eq:calL}. With these eigenpairs at hand, we define the space 
\begin{equation}
  \mathcal{H}^s(\Omega) : = \left\{ w = \sum_{k=1}^\infty w_k \phi_k ~\big|~ \| w \|_{\mathcal{H}^s(\Omega)} := \left(
  \sum_{k=1}^{\infty} \mu_k^s w_k^2 \right)^{\frac12} < \infty \right\}.
   \label{eq:mathcalH}
\end{equation}
In the analysis that follows we will make use and mention when relevant that the space $\mathbb{H}^s(\Omega)$, defined in \eqref{def:Hs}, is equivalent to $\mathcal{H}^s(\Omega)$: for $w \in \Hs$, we have that
\begin{equation}
C_{\dagger}  \| w \|_{\mathcal{H}^s(\Omega)} \leq  \| w \|_{\Hs} \leq C^{\dagger} \| w \|_{\mathcal{H}^s(\Omega)},
\end{equation}
where $C^{\dagger}$ and $C_{\dagger}$ denote positive constants.
\label{rk:mathcalH}
\end{remark}

We denote by $\langle \cdot,\cdot \rangle$ the duality pairing between $\Hs$ and $\Hsd$; $\Hsd$ denotes the dual space of $\Hs$. Through this duality pairing, the definition of the space \eqref{def:Hs} and the norm $ \| \cdot \|_{\Hs}$ can both be extended to $s < 0$. 
By real interpolation between $L^2(\Omega)$ and $H^1_0(\Omega)$, we infer, for $s \in (0,1)$, that $\Hs = [L^2(\Omega), H^1_0(\Omega)]_s$. If $s\in(1,2]$, owing to \eqref{eq:Omega_regular}, we have that $\Hs = H^s(\Omega)\cap H^1_0(\Omega)$ \cite{ShinChan}.  

For $s=1$ and $w  = \sum_{k=1}^\infty w_k \varphi_k \in \mathbb{H}^{1}(\Omega)$, we thus have that $\calL w = \sum_{k=1}^\infty \lambda_k w_k \varphi_k \in \mathbb{H}^{-1}(\Omega)$. For $s \in (0,1)$ and  $w  = \sum_{k=1}^\infty w_k \varphi_k \in \Hs$, the operator $\calL^s$ is defined by
\begin{equation}
 \label{def:second_frac}
\calLs:\Hs \rightarrow \Hsd,
\qquad \calL^s w  
= 
\sum_{k=1}^\infty \lambda_k^{s} w_k \varphi_k.
\end{equation} 

A weak formulation for \eqref{eq:fractional_diffusion} reads as follows: Find $u \in \Hs$ such that
\begin{equation}
\label{eq:fractional_diffusion_weak}
\langle \calL^s u, v \rangle = \langle f, v  \rangle \quad \forall v \in \Hs.
\end{equation}
Given $f \in \Hsd$, problem \eqref{eq:fractional_diffusion_weak} admits a unique weak solution $u \in \Hs$ \cite{CT:10,CDDS:11}. In addition, the  following estimate can be derived \cite{CT:10,CDDS:11}
\begin{equation}
\label{23-2-f3}
\| u \|_{\Hs} \lesssim \| f \|_{\Hsd}.
\end{equation}
%
\subsection{An extension property}
\label{sub:CaffarelliSilvestre}
The Caffarelli--Silvestre extension result requires us to deal with the local but nonuniformly elliptic problem \eqref{eq:alpha_harm_intro} (cf.\ \cite{BCdPS:12,CT:10,CS:07, CDDS:11}). It is thus instrumental to define Lebesgue and Sobolev spaces with the weight $y^{\alpha}$, where $\alpha \in (-1,1)$. If $D \subset \R_{+}^{d+1}$, we define $L^2(y^\alpha,D)$ as the Lebesgue space for the measure $y^{\alpha}\diff x$. We also define the weighted Sobolev space
\[
H^1(y^{\alpha},D) :=
  \left\{ w \in L^2(y^{\alpha},D) ~|~ | \nabla w | \in L^2(y^{\alpha},D) \right\},
\]
where $\nabla w$ is the distributional gradient of $w$. We equip $H^1(y^{\alpha},D)$ with the norm
\begin{equation}
\label{eq:wH1norm}
\| w \|_{H^1(y^{\alpha},D)} =
\left(  \| w \|^2_{L^2(y^{\alpha},D)} + \| \nabla w \|^2_{L^2(y^{\alpha},D)} 
\right)^{\frac{1}{2}}.
\end{equation}
Since $\alpha \in (-1,1)$, the weight $y^\alpha$ belongs to the Muckenhoupt class $A_2(\R^{d+1})$ (cf.\ \cite{Javier,FKS:82,GU,Muckenhoupt,Turesson}). 
This in particular implies that $H^1(y^{\alpha},D)$ with norm \eqref{eq:wH1norm}
is a Hilbert space and $C^{\infty}(D) \cap H^1(y^{\alpha},D)$ is dense in $H^1(y^{\alpha},D)$ 
(cf.~\cite[Proposition 2.1.2, Corollary 2.1.6]{Turesson}, \cite{KO84} and \cite[Theorem~1]{GU}). 

We define the weighted Sobolev space
\begin{equation}
  \label{HL10}
  \HL(y^{\alpha},\C) = \left\{ w \in H^1(y^\alpha,\C) ~|~ w = 0 \textrm{ on } \partial_L \C\right\},
\end{equation}
and immediately notice that $\HL(y^{\alpha},\C)$ can be equivalently defined as \cite{CDDS:11}
\begin{align}
\HL(y^{\alpha},\C) = \big\{ &w: \C \rightarrow \R: \quad w \in H^1(\Omega \times (r,t)) \quad \forall \,\,  0<r<t<\infty,
\nonumber
\\  
&w = 0 \textrm{ on } \partial_L \C, \quad \| \nabla w \|_{L^2(y^{\alpha},\C)} < \infty \big\}.
\label{eq:H1weighted}
\end{align}

As \cite[inequality (2.21)]{NOS} shows, the following \emph{weighted Poincar\'e inequality} holds:
\begin{equation}
\label{Poincare_ineq}
\| w \|_{L^2(y^{\alpha},\C)} \lesssim \| \nabla w \|_{L^2(y^{\alpha},\C)}
\quad \forall w \in \HL(y^{\alpha},\C).
\end{equation}
Consequently, the seminorm on $\HL(y^{\alpha},\C)$ is equivalent to \eqref{eq:wH1norm}.  

For $w \in H^1(y^{\alpha},\C)$, $\tr w$ denotes its trace onto $\Omega \times \{ 0 \}$ 
which satisfies
\begin{equation}
\label{Trace_estimate}
\tr \HL(y^\alpha,\C) = \Hs,
\qquad
  \|\tr w\|_{\Hs} \leq C_{\tr} \| w \|_{\HLn(y^\alpha,\C)};
\end{equation}
see \cite[Proposition 2.5]{NOS}. We notice that, if a function $w $ belongs to $\HL(y^{\alpha},\C)$ then, in view of \eqref{eq:H1weighted}, we have, for $y>0$, that $w(\cdot,y) \in H^{1/2}(\Omega)$.

Define the continuous and coercive bilinear form $\blfa{\C}: \HL(y^{\alpha},\C) \times  \HL(y^{\alpha},\C) \to \R$:
\begin{equation}
\label{eq:blf-a} 
\blfa{\C}(w,\phi)(q) =  \int_\C y^\alpha \left( \bA \nabla w \cdot \nabla \phi + q w \phi \right) \diff x' \diff y.
\end{equation}
We shall simply write $\blfa{\C}(w,\phi)$ or $\blfa{\C}$ when no confusion arises. Notice that $\blfa{\C}$ induces an inner product on $\HL(y^\alpha,\C)$ and an \emph{energy norm}. The latter is defined as follows:
\begin{equation}
\label{eq:norm-C} 
\normC{w}:= \blfa{\C}(w,w)^{1/2} \quad \forall w \in \HL(y^\alpha,\C).
\end{equation}
In view of the assumptions on $\bA$ and $q$, we conclude that $\normC{w} \sim \|\nabla w\|_{L^2(y^\alpha,\C)}$. 

We now present a weak formulation for problem \eqref{eq:alpha_harm_intro}:
\begin{equation}
\label{eq:ue-variational} 
\ue \in  \HL(y^{\alpha},\C): \quad
\blfa{\C}(\ue,\phi)(q) = d_s \langle f,\tr \phi \rangle \qquad \forall \phi \in \HL(y^\alpha,\C).
\end{equation}

The fundamental result of Caffarelli and Silvestre \cite{CS:07}, \cite[Proposition 2.2]{CT:10}, \cite{CDDS:11} is stated bellow.

\begin{proposition}[Caffarelli--Silvestre extension result]\label{22-10-18ct1}
Let $s \in (0,1)$ and $u \in \Hs$ be the solution to \eqref{eq:fractional_diffusion} with $f \in \Hsd$. If $\ue$ solves \eqref{eq:ue-variational}, then
\begin{equation}\label{eq:identity2}
u = \tr \ue \text{ in } \Omega, \qquad  d_s \calL^s u = \partial_{\nu^\alpha} \ue \text{ in } \Omega,
\end{equation} 
where $d_s = 2^{1-2s} \Gamma(1-s)/\Gamma(s)$.
\end{proposition}

The $\Hs$--norm of $u$ and the energy norm $\normC{\cdot}$ of $\ue$ are related by
\begin{equation}
\label{eq:norms_are_equal}
 d_s  \| u \|_{\Hs}^2 = \normC{\ue}^2.
\end{equation}


\section{The extended identification problem}
\label{sec:extended}

In order to analyze problem \eqref{eq:min_fractional} and design a numerical technique to efficiently solve it, we will consider an equivalent minimization problem based on \eqref{eq:ue-variational}: the \emph{extended identification problem}. To describe it, we define the extended coefficient--to--solution operator
\begin{equation}
\label{eq:mapE}
E: \calQ \rightarrow \HL(y^{\alpha},\C), \quad q \mapsto \ue(q),
\end{equation}
which, for a given coefficient $q \in \calQ$ associates to it the unique weak solution $\ue =: E(q) \in \HL(y^{\alpha},\C)$ of problem \eqref{eq:ue-variational}. With the map $E$ at hand, we define, for $\rho > 0$ and $z_{\delta} \in L^2(\Omega)$, the cost functional
\begin{equation}\label{eq:functional_22}
\calJ_{\delta,\rho}(q) := \frac{1}{2} \| \tr E(q) - z_{\delta} \|^2_{L^2(\Omega)} + \frac{\rho}{2} \| q - q^* \|_{L^2(\Omega)}^2.
\end{equation}

The \emph{extended identification problem} thus reads as follows:
\begin{equation}
\label{eq:min_extended}
\min_{q \in \calQ} \calJ_{\delta,\rho}(q).
\end{equation}

The following remark is in order.
\begin{remark}[non--uniqueness]\rm
Due to the lack of strict convexity of the cost functional $\mathcal{J}_{\delta,\rho}$ the uniqueness of a minimizer, when it exists, cannot be guaranteed.
\end{remark}

The previous remark motivates the following definition \cite[Section 4.4]{Tbook}.

\begin{definition}[local solution]\rm
A coefficient $\tilde q$ is said to be a local solution for problem \eqref{eq:min_extended} if there exists $\epsilon > 0$ such that for all $q \in \calQ$ that satisfies $\| q -\tilde q\|_{L^2(\Omega)} < \epsilon$ we have that $\mathcal{J}_{\rho,\delta}(\tilde q) \leq \mathcal{J}_{\rho,\delta}(q)$.
\label{def:local_solution}
\end{definition}

In the following result we state the equivalence between the fractional identification problem \eqref{eq:min_fractional} and the extended one \eqref{eq:min_extended}.

\begin{theorem}[equivalence of \eqref{eq:min_fractional} and \eqref{eq:min_extended}] 
\rm
The coefficient $q^{\ddagger} \in \mathcal{Q}$ is a local solution of \eqref{eq:min_extended} if and only if $q^{\ddagger} \in \mathcal{Q}$ is a local solution of  \eqref{eq:min_fractional}. In addition, we have
\begin{equation}
\label{eq:equivalence}
\tr E(q^{\ddagger}) = U(q^{\ddagger}),
\end{equation}
where $\tr$ is defined as in \S \ref{sub:CaffarelliSilvestre}.
\label{thm:equivalence}
\end{theorem}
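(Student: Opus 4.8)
The plan is to exploit the two identities supplied by the Caffarelli--Silvestre extension result, namely \eqref{eq:identity2} and, crucially, the norm equivalence \eqref{eq:norms_are_equal}. First I would fix a coefficient $q \in \calQ$ and observe that, by the definitions of the coefficient--to--solution operators $U$ and $E$, the function $u = U(q)$ solves \eqref{eq:fractional_diffusion_weak} while $\ue = E(q)$ solves \eqref{eq:ue-variational}; the extension result then gives $\tr E(q) = U(q)$ pointwise a.e.\ in $\Omega$. Since this holds for every admissible $q$, we obtain \eqref{eq:equivalence} for any candidate $q^{\ddagger}$, and, more importantly for the equivalence of the optimization problems, we get
\begin{equation*}
\| \tr E(q) - z_{\delta} \|_{L^2(\Omega)} = \| U(q) - z_{\delta} \|_{L^2(\Omega)} \qquad \forall q \in \calQ.
\end{equation*}
This identity, together with the fact that the Tikhonov term $\tfrac{\rho}{2}\|q - q^*\|_{L^2(\Omega)}^2$ is literally the same in \eqref{eq:functional} and \eqref{eq:functional_22}, yields
\begin{equation*}
\calJ_{\delta,\rho}(q) = J_{\delta,\rho}(q) \qquad \forall q \in \calQ.
\end{equation*}

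With the pointwise equality of the two cost functionals on all of $\calQ$ in hand, the equivalence of local solutions is essentially immediate. Suppose $q^{\ddagger}$ is a local solution of \eqref{eq:min_extended} in the sense of Definition \ref{def:local_solution}: there is $\epsilon > 0$ so that $\calJ_{\delta,\rho}(q^{\ddagger}) \le \calJ_{\delta,\rho}(q)$ whenever $q \in \calQ$ and $\|q - q^{\ddagger}\|_{L^2(\Omega)} < \epsilon$. Replacing $\calJ_{\delta,\rho}$ by $J_{\delta,\rho}$ on both sides via the functional identity, we conclude $J_{\delta,\rho}(q^{\ddagger}) \le J_{\delta,\rho}(q)$ on the same $L^2$--ball, so $q^{\ddagger}$ is a local solution of \eqref{eq:min_fractional}. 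The converse direction is identical, swapping the roles of the two functionals. Finally, \eqref{eq:equivalence} follows from the first identity in \eqref{eq:identity2} applied at $q = q^{\ddagger}$.

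I do not expect a serious obstacle here; the content of the statement is a direct consequence of the Caffarelli--Silvestre correspondence, which the excerpt already records. The one point that needs a little care is making sure the trace $\tr E(q)$ is genuinely an $L^2(\Omega)$ function so that $\| \tr E(q) - z_{\delta}\|_{L^2(\Omega)}$ makes sense and coincides with the fractional object: this is guaranteed by the trace estimate \eqref{Trace_estimate}, since $\Hs \hookrightarrow L^2(\Omega)$ for $s \in (0,1)$, together with \eqref{eq:identity2} which identifies $\tr \ue$ with the unique weak solution $u = U(q)$ of \eqref{eq:fractional_diffusion_weak}. One should also note that the well--posedness of \eqref{eq:ue-variational} (coercivity of $\blfa{\C}$ via the weighted Poincar\'e inequality \eqref{Poincare_ineq}) ensures $E$ is well defined on all of $\calQ$, so the functional identity holds without exceptional coefficients. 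No quantitative estimate is required — only the exact equality of the two functionals — so the argument is purely qualitative.
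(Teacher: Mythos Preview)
Your argument is correct and matches the paper's own proof essentially line for line: both use \eqref{eq:identity2} to obtain $\tr E(q) = U(q)$ for every $q \in \calQ$, deduce $\calJ_{\delta,\rho}(q) = J_{\delta,\rho}(q)$ on all of $\calQ$, and conclude the equivalence of local minima from this pointwise equality of functionals. One small remark: you announce the norm identity \eqref{eq:norms_are_equal} as ``crucial'' in your opening, but you never use it and it is not needed here---only the trace identity from \eqref{eq:identity2} is required.
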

\begin{proof}
Let $q^{\ddagger} \in \mathcal{Q}$. In view of  \eqref{eq:identity2} we immediately conclude that $\tr E(q^{\ddagger}) = U(q^{\ddagger})$ and that
\[
\frac{1}{2} \| U(q^{\ddagger}) - z_{\delta} \|^2_{L^2(\Omega)} + \frac{\rho}{2} \| q^{\ddagger} - q^* \|_{L^2(\Omega)}^2.
=
\frac{1}{2} \| \tr E(q^{\ddagger}) - z_{\delta} \|^2_{L^2(\Omega)} + \frac{\rho}{2} \| q^{\ddagger} - q^* \|_{L^2(\Omega)}^2.
\]
Consequently, $J_{\delta,\rho}(q^{\ddagger}) = \calJ_{\delta,\rho}(q^{\ddagger})$. This implies our claim and concludes the proof.
\end{proof}

\subsection{Existence of solutions}

We present the following result.

\begin{theorem}[existence of solutions]
\label{thm:exist}
The regularization problem \eqref{eq:min_extended} has a solution $q_{\delta,\rho}$.
\end{theorem}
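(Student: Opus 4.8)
The plan is to use the direct method of the calculus of variations. First I would observe that the cost functional $\calJ_{\delta,\rho}$ is bounded below by zero, so the infimum over $\calQ$ is a finite nonnegative number $m$; pick a minimizing sequence $\{q_n\}_{n\in\bbN}\subset\calQ$ with $\calJ_{\delta,\rho}(q_n)\to m$. Since $\calQ$ is bounded in $L^\infty(\Omega)$, it is bounded in $L^2(\Omega)$, hence, after extracting a subsequence (not relabeled), $q_n \rightharpoonup q_{\delta,\rho}$ weakly in $L^2(\Omega)$ for some $q_{\delta,\rho}\in L^2(\Omega)$. The set $\calQ$ is convex and closed in $L^2(\Omega)$ (the pointwise bounds $0\le q\le \bar q$ are preserved under $L^2$-limits along a further a.e.-convergent subsequence, or simply because $\calQ$ is convex and strongly closed, hence weakly closed), so $q_{\delta,\rho}\in\calQ$.

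The key step is the continuity of the coefficient--to--solution map $E$ with respect to this weak convergence, in a suitable sense. Denote $\ue_n := E(q_n)$ and $\ue := E(q_{\delta,\rho})$. Testing \eqref{eq:ue-variational} with $\phi = \ue_n$ and using coercivity of $\blfa{\C}$ together with the trace estimate \eqref{Trace_estimate} gives $\normC{\ue_n} \lesssim \|f\|_{\Hsd}$ uniformly in $n$; since $\normC{\cdot}\sim\|\nabla\cdot\|_{L^2(y^\alpha,\C)}$ and the weighted Poincaré inequality \eqref{Poincare_ineq} holds, $\{\ue_n\}$ is bounded in $\HL(y^\alpha,\C)$. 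Extract a further subsequence so that $\ue_n \rightharpoonup w$ weakly in $\HL(y^\alpha,\C)$. I would then pass to the limit in the weak formulation
\[
\int_\C y^\alpha \bA \nabla \ue_n \cdot \nabla \phi \,\diff x' \diff y + \int_\C y^\alpha q_n \ue_n \phi \,\diff x' \diff y = d_s\langle f, \tr\phi\rangle
\]
for a fixed $\phi\in\HL(y^\alpha,\C)$. The first term converges to $\int_\C y^\alpha \bA\nabla w\cdot\nabla\phi$ by weak convergence of the gradients. For the reaction term, the product $q_n\ue_n$ is the troublesome piece: I would handle it by noting that on any truncated cylinder $\C_\Y$ the embedding $\HL(y^\alpha,\C_\Y)\hookrightarrow L^2(y^\alpha,\C_\Y)$ is compact (this is the weighted Rellich--Kondrachov property for $A_2$ weights), so $\ue_n\to w$ strongly in $L^2(y^\alpha,\C_\Y)$; meanwhile $q_n\phi\rightharpoonup q_{\delta,\rho}\phi$ weakly in $L^2(y^\alpha,\C_\Y)$ (since $q_n\rightharpoonup q_{\delta,\rho}$ in $L^2(\Omega)$ and $\phi$ is a fixed function, using that $q_n$ does not depend on $y$), and the pairing of a weakly convergent sequence against a strongly convergent one passes to the limit; the tail $\C\setminus\C_\Y$ is controlled uniformly in $n$ using $\normC{\ue_n}\lesssim 1$, $|q_n|\le\bar q$, and $\phi\in L^2(y^\alpha,\C)$, letting $\Y\to\infty$. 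Hence $\blfa{\C}(w,\phi)(q_{\delta,\rho}) = d_s\langle f,\tr\phi\rangle$ for all $\phi$, so by uniqueness $w = E(q_{\delta,\rho}) = \ue$.

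It remains to pass to the limit in the functional. Since $\tr$ is continuous from $\HL(y^\alpha,\C)$ to $\Hs\hookrightarrow L^2(\Omega)$, weak convergence $\ue_n\rightharpoonup\ue$ in $\HL(y^\alpha,\C)$ gives $\tr\ue_n\rightharpoonup\tr\ue$ weakly in $L^2(\Omega)$, whence $\|\tr E(q_{\delta,\rho})-z_\delta\|_{L^2(\Omega)} \le \liminf_n \|\tr E(q_n)-z_\delta\|_{L^2(\Omega)}$ by weak lower semicontinuity of the norm; likewise $\|q_{\delta,\rho}-q^*\|_{L^2(\Omega)}\le\liminf_n\|q_n-q^*\|_{L^2(\Omega)}$. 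Adding these, $\calJ_{\delta,\rho}(q_{\delta,\rho}) \le \liminf_n \calJ_{\delta,\rho}(q_n) = m$, and since $q_{\delta,\rho}\in\calQ$ we also have $\calJ_{\delta,\rho}(q_{\delta,\rho})\ge m$; therefore $q_{\delta,\rho}$ is a minimizer. The main obstacle is the weak-strong argument for the reaction term together with the uniform control of the cylindrical tail, which is what makes the compactness-on-truncations detour necessary; everything else is routine lower semicontinuity and boundedness.
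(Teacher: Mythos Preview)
Your argument is correct and reaches the same conclusion by a genuinely different route in the crucial compactness step. One small point to tighten: the claim that $q_n\phi\rightharpoonup q_{\delta,\rho}\phi$ weakly in $L^2(y^\alpha,\C_\Y)$ does not follow from weak $L^2(\Omega)$-convergence of $q_n$ alone, because testing against $g\in L^2(y^\alpha,\C_\Y)$ yields $x'\mapsto\int_0^\Y y^\alpha\phi g\,\diff y$, which is only $L^1(\Omega)$ in general; you should use weak$^*$ convergence in $L^\infty(\Omega)$, which you do have from the uniform bound $0\le q_n\le\bar q$ (and which coincides with your $L^2$-weak limit).

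For comparison, the paper works with weak$^*$ $L^\infty$-convergence of $q_{n_k}$ from the outset and, to pass to the limit in the reaction term, adds and subtracts $q_{\delta,\rho}\Theta\phi$; the resulting cross term $\int_\C y^\alpha q_{n_k}(E_{n_k}-\Theta)\phi$ is then handled by the Vitali convergence theorem, with the tail controlled by the exponential decay of $E_{n_k}$ in the extended variable (a PDE-specific property). Your truncation device replaces this by the weighted Rellich--Kondrachov embedding on bounded cylinders together with a uniform tail estimate coming only from $\phi\in L^2(y^\alpha,\C)$; this is more elementary in that it needs neither Vitali nor the exponential decay. Finally, the paper upgrades $\tr E_{n_k}\rightharpoonup\tr\Theta$ to strong $L^2(\Omega)$-convergence via the compact embedding $\Hs\hookrightarrow L^2(\Omega)$, whereas you rely only on weak lower semicontinuity of the norm, which already suffices for the minimization argument.
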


\begin{proof}
Let $(q_n)_{n \in \mathbb{N}} \subset \calQ$ be a minimizing sequence for problem \eqref{eq:min_extended}, \ie
\begin{align*}
&\limn \left( \dfrac{1}{2}\|\tr E(q_n) -z_\delta\|^2_{L^2(\Omega)} + \dfrac{\rho}{2}\|q_n-q^*\|^2_{L^2(\Omega)}\right) \\
&~\quad = \inf_{q\in \calQ} \left( \dfrac{1}{2}\|\tr E(q) -z_\delta\|^2_{L^2(\Omega)} + \dfrac{\rho}{2}\|q-q^*\|^2_{L^2(\Omega)}\right);
\end{align*}
such a sequence does exist by the definition of the infimum. We immediately notice that in view of \cite[Theorem 1.7]{MR3014456} we can conclude that $\calQ$ is a weakly$^*$ compact subset of $L^\infty(\Omega)$; see also \cite[Remark 2.1]{quyen18} and \cite[Theorem 3.16]{MR2759829}. Consequently, we deduce the existence of $q_{\delta,\rho}\in \calQ$ and a subsequence $(q_{n_k})_{k \in \mathbb{N}} \subset (q_n)_{n \in \mathbb{N}}$ such that $(q_{n_k})_{k \in \mathbb{N}}$ converges weakly$^*$ to $q_{\delta,\rho}$ in $L^\infty(\Omega)$, \ie
\begin{align}\label{1-3-18-f1}
\lim_{k \rightarrow \infty} \int_\Omega q_{n_k} \chi \diff x' = \int_\Omega q_{\delta,\rho} \chi \diff x' \quad \forall \chi \in L^1(\Omega).
\end{align}

On the other hand, in view of \eqref{23-2-f3}, \eqref{eq:norms_are_equal}, and Remark \ref{rk:mathcalH}, we have that $(E_{n_k})_{k \in \mathbb{N} }$, defined by $E_{n_k} :=E(q_{n_k})$, is bounded in $H^1(y^{\alpha},\C)$. We conclude the existence of $\Theta \in H^1(y^{\alpha},\C)$ and a nonrelabeled subsequence $(E_{n_k})_{k\in \mathbb{N}}$ such that
\begin{equation}
\label{eq:embeddings}
E_{n_k} \rightharpoonup \Theta 
\mbox{  in  }
H^1(y^{\alpha},\C),
\qquad
E_{n_k} \rightharpoonup \Theta 
\mbox{  in  }
L^2(y^{\alpha},\C),
\qquad
k \uparrow \infty.
\end{equation}
In addition, since $\tr: \HL(y^{\alpha},\C) \rightarrow \mathcal{H}^s(\Omega)$ is linear and continuous, the compact embedding $ \mathcal{H}^s(\Omega) \hookrightarrow L^2(\Omega)$ \cite[Theorem 3.27]{Mclean} reveals that
\begin{equation}
\label{eq:embeddings2}
\tr E_{n_k} \rightharpoonup \tr \Theta 
\mbox{  in  } 
\mathcal{H}^s(\Omega),
\qquad
\tr E_{n_k} \rightarrow \tr \Theta 
\mbox{  in  }
L^2(\Omega),
\qquad
k \uparrow \infty.
\end{equation}
We recall that $\mathcal{H}^s(\Omega)$, that is defined in \eqref{eq:mathcalH}, is equivalent to the space $\Hs$; see Remark \ref{rk:mathcalH}.

In what follows we derive that $\Theta = E(q_{\delta,\rho})$. Let  $n_k \in \mathbb{N}$ and $\phi \in \HL(y^\alpha,\C)$. From definitions \eqref{eq:ue-variational} and \eqref{eq:blf-a}, we obtain that
\begin{multline*}
d_s \langle f,\tr \phi \rangle 
= \int_\C y^\alpha \left( \bA \nabla E_{n_k} \cdot \nabla \phi + q_{n_k} E_{n_k} \phi \right) \diff x 
= \int_\C y^\alpha (q_{n_k}-q_{\delta,\rho}) \Theta \phi  \diff x
\\
+
\int_\C y^\alpha \left( \bA \nabla E_{n_k} \cdot \nabla \phi + q_{\delta,\rho}\Theta \phi \right) \diff x
+ 
\int_\C y^\alpha q_{n_k} (E_{n_k} - \Theta)\phi \diff x  =: \mathrm{I}_k + \mathrm{II}_k + \mathrm{III}_k.
\end{multline*}

We proceed to estimate $\textrm{I}_k$. To accomplish this task, we define, a.e. $x'\in \Omega$,  
the function $\chi(x') := \int_0^\infty y^\alpha \Theta(x',y) \phi(x',y) \diff y$ and notice that $\chi \in L^1(\Omega)$. In fact,
\begin{align*}
\| \chi \|_{L^1(\Omega)}
&= \int_\Omega \left|\int_0^\infty y^\alpha \Theta(x',y) \phi(x',y) \diff y\right| \diff x'\\
&\le \int_\C \left| y^\alpha \Theta(x',y) \phi(x',y) \right|\diff x
\le \|\Theta\|_{L^2(y^{\alpha},\C)} \|\phi\|_{L^2(y^{\alpha},\C)} < \infty,
\end{align*}
where we have used that $\Theta \in \HL(y^{\alpha},\C)$. This, together with \eqref{1-3-18-f1}, yields
\begin{align}
\label{28-2-18ct3}
\int_\C y^\alpha q_{n_k}  \Theta \phi \diff x 
&= \int_\Omega q_{n_k} \left(\int_0^\infty y^\alpha \Theta(x',y) \phi(x',y) \diff y\right) \diff x' = \int_{\Omega} q_{n_k} \chi \diff x'\notag\\
&
\rightarrow \int_{\Omega} q_{\delta,\rho} \chi \diff x' 
= \int_\C y^\alpha q_{\delta,\rho}  \Theta \phi \diff x
\end{align}
as $k \uparrow \infty$. As a consequence, when $k \uparrow \infty$, the term $\textrm{I}_k \rightarrow 0$.

Applying directly \eqref{eq:embeddings}, we conclude, as $k \uparrow \infty$, that
\begin{align}\label{28-2-18ct2}
\textrm{II}_k= \int_\C y^\alpha \left( \bA \nabla E_{n_k} \cdot \nabla \phi + q_{\delta,\rho}\Theta \phi \right) \diff x  \rightarrow \int_\C y^\alpha \left( \bA \nabla \Theta \cdot \nabla \phi + q_{\delta,\rho}\Theta \phi \right) \diff x.
\end{align}

To control the term $\mathrm{III}_k$, we proceed as follows. First, notice that
\[
\left| \mathrm{III}_k \right | =
\left| \int_0^{\infty} y^{\alpha} \int_{\Omega} q_{n_k} (x') [ E_{n_k}(x',y) - \Theta(x',y)] \phi(x',y) \diff x' \diff y \right | \leq \overline{q} \int_0^{\infty} \zeta_{n_k}(y) \diff y,
\]
where
\[
\zeta_{n_k}(y):= y^{\alpha} \| E_{n_k} (\cdot,y) - \Theta(\cdot,y) \|_{L^2(\Omega)} \|  \phi (\cdot,y) \|_{L^2(\Omega)}.
\]

Since $E_{n_k} \in \HL(y^{\alpha},\C)$, \eqref{eq:H1weighted} implies that $ \{ E_{n_k}(\cdot,y) \} \subset H^{1/2}(\Omega)$ for a.e. $y \in (0,\infty)$. This and the trace estimate \eqref{Trace_estimate} allow us to conclude that $\{ E_{n_k} (\cdot,y)\}$ converges to $\Theta(\cdot,y)$ in $L^2(\Omega)$ and thus that, a.e. $y \in (0,\infty)$, $\zeta_{n_k}(y) \rightarrow 0$ as $n_k \uparrow 0$. In addition, it can be proved that 
$
( \zeta_{n_k} )_{k \in \mathbb{N}}
$
is uniformly integrable and that, for every $\epsilon > 0$, there exists a set $A \subset (0,\infty)$ of finite measure such that, for all $n_k$, $\int_{A^c} \zeta_{n_k} \diff y < \epsilon$; the latter being a consequence of the exponential decay of $E_{n_k}$ in the extended variable $y$ \cite[Proposition 3.1]{NOS}. We can thus apply the Vitali convergence theorem \cite[Section 6]{Folland} to conclude that
\begin{equation}
\label{eq:III_k}
\left| \int_0^{\infty} y^{\alpha} \int_{\Omega} q_{n_k} (x') [ E_{n_k}(x',y) - \Theta(x',y)] \phi(x',y) \diff x' \diff y \right | \rightarrow 0,
\end{equation}
as $k \uparrow \infty$.

It thus follows from \eqref{28-2-18ct3}--\eqref{eq:III_k} that
\begin{align*}
\int_\C y^\alpha \left( \bA \nabla \Theta \cdot \nabla \phi + q_{\delta,\rho} \Theta \phi \right) \diff x = d_s \langle f,\tr \phi \rangle \quad \forall \phi \in \HL(y^\alpha,\C),
\end{align*}
\ie we have thus proved that $\Theta = E(q_{\delta,\rho})$. 

We now invoke the fact that $\tr E_{n_k} \rightarrow \tr E(q_{\delta,\rho})$ in $L^2(\Omega)$ and that $\| \cdot \|_{L^2(\Omega)}$ is weakly lower semicontinuous to conclude that
\begin{align*}
\mathcal{J}_{\rho,\delta}(q_{\rho,\delta}) & = \dfrac{1}{2}\|\tr E(q_{\delta,\rho}) -z_\delta\|^2_{L^2(\Omega)} + \dfrac{\rho}{2}\|q_{\delta,\rho}-q^*\|^2_{L^2(\Omega)} \\
&\leq \liminf_{k\to\infty} \left( \dfrac{1}{2}\|\tr E_{n_k} -z_\delta\|^2_{L^2(\Omega)} + \dfrac{\rho}{2}\|q_{n_k}-q^*\|^2_{L^2(\Omega)}\right) \\
&= \inf_{q\in \calQ} \left( \dfrac{1}{2}\|\tr E(q) -z_\delta\|^2_{L^2(\Omega)} + \dfrac{\rho}{2}\|q-q^*\|^2_{L^2(\Omega)}\right).
\end{align*}
Consequently, $q_{\delta,\rho}$ is a minimizer for problem \eqref{eq:min_extended}. This concludes the proof.
\end{proof}

\begin{remark}[local solution]\rm
With Definition \ref{def:local_solution} at hand, the results of Theorem \ref{thm:exist} immediately yield the existence of a local solution $q_{\delta,\rho}$ for \eqref{eq:min_extended}.
\label{remark:local_solution}
\end{remark}
\begin{theorem}[existence of solutions]
The fractional regularization problem \eqref{eq:min_fractional} has a solution $q_{\delta,\rho}$.
\label{thm:exist_fractional}
\end{theorem}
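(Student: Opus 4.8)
The plan is to deduce Theorem~\ref{thm:exist_fractional} directly from the two results already established, namely the existence result of Theorem~\ref{thm:exist} for the extended problem and the equivalence of Theorem~\ref{thm:equivalence}. The argument is essentially a one--line transfer of information, so the proof below will be short.

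First I would invoke Theorem~\ref{thm:exist} to obtain a coefficient $q_{\delta,\rho}\in\calQ$ that minimizes $\calJ_{\delta,\rho}$ over $\calQ$; in particular, by Remark~\ref{remark:local_solution}, $q_{\delta,\rho}$ is in fact a local solution of \eqref{eq:min_extended} in the sense of Definition~\ref{def:local_solution} (indeed a global one). Next I would apply Theorem~\ref{thm:equivalence}: since $q_{\delta,\rho}$ is a local solution of the extended problem \eqref{eq:min_extended}, it is also a local solution of the fractional problem \eqref{eq:min_fractional}. Finally, to see that $q_{\delta,\rho}$ is actually a \emph{global} minimizer of $J_{\delta,\rho}$ over $\calQ$ — which is what ``has a solution'' should mean here — I would use the pointwise identity $J_{\delta,\rho}(q)=\calJ_{\delta,\rho}(q)$ valid for every $q\in\calQ$, which follows from \eqref{eq:equivalence} (more precisely, from $\tr E(q)=U(q)$ for all $q$, a consequence of \eqref{eq:identity2}). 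Since the two functionals coincide on all of $\calQ$, any minimizer of one is a minimizer of the other, and hence $q_{\delta,\rho}$ solves \eqref{eq:min_fractional}.

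There is no real obstacle here; the only mild point of care is making explicit that the identity $\tr E(q)=U(q)$ holds for \emph{every} $q\in\calQ$ (not merely at the optimizer), so that $J_{\delta,\rho}\equiv\calJ_{\delta,\rho}$ on $\calQ$ as functionals. This is immediate from the Caffarelli--Silvestre identity \eqref{eq:identity2}, which relates $U(q)=u$ and $\tr E(q)=\tr\ue$ for the solution $\ue$ of \eqref{eq:ue-variational} with that particular $q$. Given this, the equality of infima, and therefore the transfer of the minimizer, is trivial.

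\begin{proof}
By Theorem~\ref{thm:exist}, problem \eqref{eq:min_extended} admits a solution $q_{\delta,\rho}\in\calQ$, i.e., $\calJ_{\delta,\rho}(q_{\delta,\rho})\le\calJ_{\delta,\rho}(q)$ for all $q\in\calQ$. On the other hand, for every $q\in\calQ$, identity \eqref{eq:identity2} yields $\tr E(q)=U(q)$ in $\Omega$, and therefore
\[
J_{\delta,\rho}(q)=\tfrac{1}{2}\|U(q)-z_\delta\|_{L^2(\Omega)}^2+\tfrac{\rho}{2}\|q-q^*\|_{L^2(\Omega)}^2=\tfrac{1}{2}\|\tr E(q)-z_\delta\|_{L^2(\Omega)}^2+\tfrac{\rho}{2}\|q-q^*\|_{L^2(\Omega)}^2=\calJ_{\delta,\rho}(q).
\]
Consequently, $J_{\delta,\rho}(q_{\delta,\rho})=\calJ_{\delta,\rho}(q_{\delta,\rho})\le\calJ_{\delta,\rho}(q)=J_{\delta,\rho}(q)$ for all $q\in\calQ$, which shows that $q_{\delta,\rho}$ is a solution of \eqref{eq:min_fractional}. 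This concludes the proof.
\end{proof}
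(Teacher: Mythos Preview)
Your proof is correct and follows exactly the same approach as the paper: the paper's proof is the single line ``The result follows directly from the results of Theorems~\ref{thm:equivalence} and~\ref{thm:exist},'' and your argument simply spells out this implication by making explicit that $\tr E(q)=U(q)$ for every $q\in\calQ$ (via \eqref{eq:identity2}), so that $J_{\delta,\rho}\equiv\calJ_{\delta,\rho}$ on $\calQ$ and the minimizer transfers.
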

\begin{proof}
The result follows directly from the results of Theorems \ref{thm:equivalence} and \ref{thm:exist}.
\end{proof}

\subsection{Optimality conditions}

We begin with a classical result.

\begin{lemma}[first--order optimality condition]
If $q_{\delta,\rho} \in \calQ$ minimizes the functional $\calJ_{\delta,\rho}$, then $q_{\delta,\rho}$ solves the variational inequality 
\begin{equation}
\label{eq:first_order}
( \calJ_{\delta,\rho}'(q_{\delta,\rho}), q - q_{\delta,\rho})_{L^2(\Omega)} \geq 0
\end{equation} 
for every $q \in \calQ$.
\end{lemma}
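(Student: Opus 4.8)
The plan is to establish the standard first--order necessary optimality condition for the minimization of a differentiable functional over a convex set. I would first observe that $\calQ$ is a convex subset of $L^2(\Omega)$: indeed, if $q_0, q_1 \in \calQ$ and $t \in [0,1]$, then $(1-t)q_0 + t q_1$ satisfies $0 \leq (1-t)q_0(x) + tq_1(x) \leq \bar q$ a.e.\ in $\Omega$, so it again belongs to $\calQ$. Consequently, for any $q \in \calQ$ the segment $q_{\delta,\rho} + t(q - q_{\delta,\rho}) = (1-t)q_{\delta,\rho} + tq$ lies in $\calQ$ for all $t \in [0,1]$.

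Next I would exploit the differentiability of $\calJ_{\delta,\rho}$. The regularization term $q \mapsto \tfrac{\rho}{2}\|q - q^*\|_{L^2(\Omega)}^2$ is plainly Fr\'echet differentiable on $L^2(\Omega)$, and the tracking term $q \mapsto \tfrac12\|\tr E(q) - z_\delta\|_{L^2(\Omega)}^2$ is differentiable by the chain rule, since the coefficient--to--solution operator $E: \calQ \to \HL(y^\alpha,\C)$ is differentiable (this is the content of the differentiability results announced in the introduction and developed in this section), the trace map $\tr$ is linear and continuous from $\HL(y^\alpha,\C)$ to $\mathcal{H}^s(\Omega) \hookrightarrow L^2(\Omega)$, and the squared $L^2(\Omega)$--norm is smooth. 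Thus $\calJ_{\delta,\rho}$ is differentiable at $q_{\delta,\rho}$ with derivative $\calJ_{\delta,\rho}'(q_{\delta,\rho}) \in L^2(\Omega)$ (identifying the dual of $L^2(\Omega)$ with itself via the Riesz representation), and for each fixed $q \in \calQ$ the scalar function $\varphi(t) := \calJ_{\delta,\rho}(q_{\delta,\rho} + t(q - q_{\delta,\rho}))$ is differentiable on $[0,1]$ with $\varphi'(0) = (\calJ_{\delta,\rho}'(q_{\delta,\rho}), q - q_{\delta,\rho})_{L^2(\Omega)}$.

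Finally I would close the argument by a one--dimensional minimality comparison. Since $q_{\delta,\rho}$ minimizes $\calJ_{\delta,\rho}$ over $\calQ$ and $q_{\delta,\rho} + t(q - q_{\delta,\rho}) \in \calQ$ for $t \in (0,1]$, we have $\varphi(t) \geq \varphi(0)$, hence
\[
(\calJ_{\delta,\rho}'(q_{\delta,\rho}), q - q_{\delta,\rho})_{L^2(\Omega)} = \varphi'(0) = \lim_{t \to 0^+} \frac{\varphi(t) - \varphi(0)}{t} \geq 0.
\]
Because $q \in \calQ$ was arbitrary, this is exactly \eqref{eq:first_order}. I do not anticipate a genuine obstacle here: this is the textbook variational inequality for convex--constrained differentiable optimization, and the only ingredient that is not entirely elementary is the differentiability of $q \mapsto \tr E(q)$, which I would simply cite from the differentiability results established earlier in this section (and which, in turn, rest on the Lax--Milgram--type solvability and coercivity of the bilinear form $\blfa{\C}(\cdot,\cdot)(q)$). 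If for some reason one wished to avoid invoking Fr\'echet differentiability of $E$ in full, it would suffice to note that one only needs the directional (G\^ateaux) derivative of $\calJ_{\delta,\rho}$ along admissible directions $q - q_{\delta,\rho}$, which follows from a direct estimate on the difference quotient of $E$ in those directions.
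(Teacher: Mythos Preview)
Your argument is correct and is exactly the standard derivation of the variational inequality for a differentiable functional over a convex admissible set. The paper itself does not prove this lemma at all: it simply introduces it as ``a classical result'' and then, \emph{after} stating it, proceeds to establish the Fr\'echet differentiability of $E$ (Lemma ``first--order Fr\'echet differentiability'') that is needed to make sense of $\calJ_{\delta,\rho}'$. So your write--up actually supplies what the paper omits, and your remark that only the directional derivative along admissible directions is really needed is a fair observation.
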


To explore \eqref{eq:first_order}, in what follows, we derive the Fr\'echet and thus the G\^{a}teaux differentiability of the map $E$. To accomplish this task, we follow standard arguments (see, for instance, \cite{doi:10.1137/S0036139901386375}) and define, for $q \in \calQ$, the map
\begin{equation}
\label{eq:mapPsi}
\Psi:  L^{\infty}(\Omega) \rightarrow \HL(y^{\alpha},\C), \qquad h \mapsto \Psi(h) = \psi,
\end{equation}
where $\psi \in \HL(y^{\alpha},\C)$ solves
\begin{equation}
\label{eq:psi}
\blfa{\C}\left( \psi ,\phi \right )(q) = - \int_{\C} y^{\alpha}  h E(q) \phi \diff x \quad \forall \phi \in \HL(y^{\alpha},\C).
\end{equation}
Since $h \in L^{\infty}(\Omega)$ and $E(q) \in \HL(y^{\alpha},\C )$, a simple application of the Lax--Milgram Lemma reveals that there exist a unique $\psi$ that solves \eqref{eq:psi} together with the estimate
\begin{equation}
\label{eq:estimate_psi}
\| \nabla \psi \|_{L^2(y^{\alpha},\C	)} \lesssim \| h \|_{L^{\infty}(\Omega)} \|f \|_{\Hsd}.
\end{equation}
Consequently, the map $\Psi: L^{\infty}(\Omega) \rightarrow \HL(y^{\alpha},\C)$ is linear and continuous.

\begin{lemma}[first--order Fr\'echet differentiability]\rm
Let $E: \calQ \rightarrow \HL(y^{\alpha},\C)$ be the extended coefficient--to--solution operator defined in \eqref{eq:mapE}.
The map $E$ is first--order Fr\'echet differentiable. In addition, for $q \in \calQ$ and $h \in L^{\infty}(\Omega)$, we have that $E'(q)h = \Psi(h)$, where $\Psi$ is defined as in \eqref{eq:mapPsi}--\eqref{eq:psi}, and that
\begin{equation}
\label{eq:normEp}
\| \nabla E'(q) h \|_{L^2(y^{\alpha},\C)}\lesssim \| f \|_{\Hsd} \| h \|_{L^{\infty}(\Omega)},
\end{equation}
where the hidden constant is independent of $q$ and $h$.
\label{lemma:differentiability}
\end{lemma}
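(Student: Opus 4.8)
The strategy is the standard one for establishing Fréchet differentiability of a coefficient-to-solution map: exhibit the candidate derivative (here the bounded linear map $\Psi$), show it is a bona fide linear continuous operator (already done in the excerpt via Lax--Milgram and \eqref{eq:estimate_psi}), and then control the remainder $r(h) := E(q+h) - E(q) - \Psi(h)$ by $o(\|h\|_{L^\infty(\Omega)})$ as $\|h\|_{L^\infty(\Omega)} \to 0$. The estimate \eqref{eq:normEp} will then follow immediately from $E'(q)h = \Psi(h)$ and the already-established bound \eqref{eq:estimate_psi}, since $\| \nabla E'(q)h\|_{L^2(y^\alpha,\C)} = \|\nabla \Psi(h)\|_{L^2(y^\alpha,\C)} \lesssim \|h\|_{L^\infty(\Omega)}\|f\|_{\Hsd}$ with a constant independent of $q$ and $h$.

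\textbf{Key steps.} First I would note that, for $h$ small enough (say $\|h\|_{L^\infty(\Omega)} < \bar q$, so that $q + h$ still lies in an admissible range), the solution $E(q+h)$ is well defined and, by \eqref{23-2-f3}, \eqref{eq:norms_are_equal} and Remark \ref{rk:mathcalH}, bounded in $\HL(y^\alpha,\C)$ uniformly in such $h$. Next, subtract the weak formulations \eqref{eq:ue-variational} for $q+h$ and for $q$: writing $w := E(q+h) - E(q)$, one gets, for all $\phi \in \HL(y^\alpha,\C)$,
\begin{equation*}
\blfa{\C}(w,\phi)(q) = -\int_\C y^\alpha h\, E(q+h)\, \phi \diff x.
\end{equation*}
Comparing with the defining equation \eqref{eq:psi} for $\psi = \Psi(h)$, the remainder $r(h) = w - \psi$ satisfies
\begin{equation*}
\blfa{\C}(r(h),\phi)(q) = -\int_\C y^\alpha h\, \bigl(E(q+h) - E(q)\bigr)\, \phi \diff x \qquad \forall \phi \in \HL(y^\alpha,\C).
\end{equation*}
Testing with $\phi = r(h)$, using coercivity of $\blfa{\C}(\cdot,\cdot)(q)$ (i.e. $\normC{r(h)}^2 \sim \|\nabla r(h)\|_{L^2(y^\alpha,\C)}^2$), bounding $|h| \le \|h\|_{L^\infty(\Omega)}$, applying Cauchy--Schwarz in $L^2(y^\alpha,\C)$ and the weighted Poincaré inequality \eqref{Poincare_ineq}, one obtains
\begin{equation*}
\|\nabla r(h)\|_{L^2(y^\alpha,\C)} \lesssim \|h\|_{L^\infty(\Omega)} \, \|E(q+h) - E(q)\|_{L^2(y^\alpha,\C)}.
\end{equation*}
Finally, I would show $\|E(q+h) - E(q)\|_{L^2(y^\alpha,\C)} = \|w\|_{L^2(y^\alpha,\C)} \lesssim \|h\|_{L^\infty(\Omega)}\|f\|_{\Hsd}$ by testing the equation for $w$ displayed above with $\phi = w$ and again invoking coercivity, Cauchy--Schwarz, the uniform bound on $\|E(q+h)\|_{\HL(y^\alpha,\C)}$, and Poincaré; this gives Lipschitz continuity of $q \mapsto E(q)$. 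Combining the two bounds yields $\|\nabla r(h)\|_{L^2(y^\alpha,\C)} \lesssim \|h\|_{L^\infty(\Omega)}^2 \|f\|_{\Hsd}$, hence $\normC{r(h)} = o(\|h\|_{L^\infty(\Omega)})$, which is precisely Fréchet differentiability with $E'(q)h = \Psi(h)$. The bound \eqref{eq:normEp} is then read off from \eqref{eq:estimate_psi}.

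\textbf{Main obstacle.} The computations are all routine energy estimates; the only point requiring a little care is the very first one, namely establishing the Lipschitz bound $\|E(q+h) - E(q)\|_{L^2(y^\alpha,\C)} \lesssim \|h\|_{L^\infty(\Omega)}\|f\|_{\Hsd}$ with a constant uniform in $q \in \calQ$ and in small $h$ — one must be sure the coercivity constant of $\blfa{\C}(\cdot,\cdot)(q)$ does not degenerate as $q$ ranges over $\calQ$, which it does not since $q \ge 0$ and the principal part $y^\alpha \bA\nabla\cdot\nabla$ already provides coercivity on $\HL(y^\alpha,\C)$ via \eqref{Poincare_ineq}, independently of $q$. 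Once that uniformity is in hand, the quadratic-in-$h$ bound on the remainder drops out mechanically.
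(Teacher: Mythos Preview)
Your proof is correct and follows essentially the same strategy as the paper: subtract the two weak formulations, derive a linear problem for the remainder, and bound it quadratically in $\|h\|_{L^\infty(\Omega)}$ via coercivity and \eqref{Poincare_ineq}. The only difference is a minor algebraic choice: the paper writes the remainder equation with respect to the perturbed bilinear form $\blfa{\C}(\cdot,\cdot)(q+h)$ instead of $\blfa{\C}(\cdot,\cdot)(q)$, which puts $\Psi(h)$ (rather than $E(q+h)-E(q)$) on the right-hand side and allows a direct appeal to \eqref{eq:estimate_psi}, thereby absorbing your separate Lipschitz-continuity step into a single estimate.
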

\begin{proof}
Let $q \in \calQ$ and consider $h \in L^{\infty}(\Omega)$ such that $q+h \in \calQ$. In view of \eqref{eq:ue-variational} and the definition of $E$, given by \eqref{eq:mapE}, we arrive at the identity
\begin{equation}
\blfa{\C}\left( E(q),\phi \right )(q) = d_s \langle f,\tr \phi \rangle = \blfa{\C}\left( E(q+h),\phi \right)(q+h)  \qquad \forall \phi \in \HL(y^\alpha,\C).
\end{equation}
This, and the fact that $\psi$ solves \eqref{eq:psi} allow us to conclude that
\begin{multline}
\int_{\C} y^{\alpha} \left[ \bA \GRAD \left(  E(q+h)- E(q) \right) \cdot \GRAD \phi + (q+h) \left(E(q+h) - E(q) \right) \phi \right] \diff x
\\
= - \int_{\C} y^{\alpha} h E(q) \phi \diff x 
= \blfa{\C}\left( \psi,\phi \right )(q)  \qquad \forall \phi \in \HL(y^{\alpha},\C).
\end{multline}
Consequently, we arrive at
\begin{equation*}
\blfa{\C}\left( E(q+h) - E(q) - \Psi(h),\phi \right )(q+h) = - \int_{\C} y^{\alpha} h  \Psi(h)  \phi \diff x
\qquad
\forall \phi \in \HL(y^{\alpha},\C),
\end{equation*} 
which implies that
\begin{align}
\|\nabla \left(  E(q+h)-E(q) - \Psi(h) \right) \|_{L^2(y^{\alpha},\C)} 
& \lesssim 
\| h \|_{L^{\infty}(\Omega)} \| \Psi(h) \|_{L^2(y^{\alpha},\C)}
\nonumber
\\ 
& \lesssim  \| h \|^2_{L^{\infty}(\Omega)} \| f \|_{\Hsd},
\label{eq:step_for_referee}
\end{align}
where, to obtain the last estimate, we have used \eqref{eq:estimate_psi}. 

Since $\Psi: L^{\infty}(\Omega) \rightarrow \HL(y^{\alpha},\C)$ is a linear and continuous map, we have thus obtained that the map $E$ is first--order Fr\'echet differentiable and that $E'(q) h = \Psi (h)$. The estimate \eqref{eq:normEp} follows from \eqref{eq:estimate_psi}. This concludes the proof.
\end{proof}

Define, for $q\in \calQ$ and a.e $x' \in \Omega$, 
\begin{align}\label{eq:q}
\mathtt{e}(q)(x') := \frac{1}{2} \int_0^\infty y^\alpha E(q)(x',y)^2\diff y.
\end{align}
Invoking \eqref{23-2-f3} and \eqref{eq:norms_are_equal}, we obtain, for every $q \in \calQ$, that $\| \mathtt{e}(q)\|_{L^1(\Omega)} \lesssim \| f \|^2_{\Hsd}$.

\begin{remark}[first--order Fr\'echet differentiability] \rm
Notice that, if $\| \mathtt{e}(q) \|_{L^{\infty}(\Omega)} < \infty$, an alternative bound for the right--hand side of \eqref{eq:psi} can be obtained on the basis of basic estimates and the Poincar\'e inequality \eqref{Poincare_ineq}:
\[
\left|  \int_{\C} y^{\alpha} h E(q) \phi \diff x \right| \lesssim \| h \|_{L^2(\Omega)} \| \mathtt{e}(q) \|^{\frac{1}{2}}_{L^{\infty}(\Omega)} \| \nabla \phi \|_{L^2(y^{\alpha},\C)}.
\]
The solution $\psi$ of problem \eqref{eq:psi} thus satisfies that
\begin{equation*}
\label{eq:estimate_psi2}
\| \nabla \psi \|_{L^2(y^{\alpha},\C	)} \lesssim \| h \|_{L^{2}(\Omega)} \| \mathtt{e}(q) \|^{\frac{1}{2}}_{L^{\infty}(\Omega)}.
\end{equation*}
Consequently,
\begin{equation}
\| \nabla E'(q) h\|_{L^2(y^{\alpha},\C)} \lesssim \| h \|_{L^2(\Omega)} \| \mathtt{e}(q) \|^{\frac{1}{2}}_{L^{\infty}(\Omega)},
\label{eq:first_order_bounded}
\end{equation}
where $q \in \mathcal{Q}$ and $h \in L^{\infty}(\Omega)$.
\label{rk:first_order}
\end{remark}

We thus present the following regularity result.

\begin{lemma}[$\| \mathtt{e}(q) \|_{L^{\infty}(\Omega)} < \infty$]
Let $s \in (0,1)$ and $n \in \{2,3\}$. If $f \in L^3(\Omega) \cap \Ws$, then $\mathtt{e}(q) \in L^{\infty}(\Omega)$.
\label{lemma:eqbounded}
\end{lemma}
\begin{proof}
We immediately notice that, by definition, $\mathtt{e}(x') \geq 0$ for a.e. $x' \in \Omega$.

Now, we observe that
\[
\DIV_{x'}( A \GRAD_{x'} \mathtt{e}(q) )
= \int_0^{\infty} y^{\alpha} E(q) \DIV_{x'}( A \GRAD_{x'} E(q)) \diff y +  \int_0^{\infty} y^{\alpha}\GRAD_{x'} E(q) A \GRAD_{x'} E(q) \diff y.
\]
On the other hand, since $E(q)$ solves problem \eqref{eq:alpha_harm_intro}, we have $-\DIV_{x'}( A \GRAD_{x'} E(q) ) + q E(q) =  y^{-\alpha} \partial_y (y^{\alpha} \partial_y E(q))$ a.e. in $\C$. Consequently,
\begin{multline*}
\int_0^{\infty} y^{\alpha} E(q) [-\DIV_{x'}( A \GRAD_{x'} E(q) ) + q E(q)] \diff y 
\\
= \int_0^{\infty} E(q)  \partial_y (y^{\alpha} \partial_y E(q)) \diff y =-
 \lim_{y \rightarrow 0} E(q) y^{\alpha}  \partial_y E(q) -  \int_0^{\infty} y^{\alpha}(\partial_y E(q) )^2 \diff y.
\end{multline*}
We invoke \cite[formula (2.34)]{NOS} to conclude that $-\lim_{y \rightarrow 0} E(q) y^{\alpha}  \partial_y E(q) = u(q) d_s f$,
where $u(q) = \tr E(q)$ solves \eqref{eq:fractional_diffusion}. Thus, $\mathtt{e} = \mathtt{e}(q)$ solves
\begin{equation}
\label{eq:second_order_e}
- \DIV_{x'}( A \GRAD_{x'} \mathtt{e}) + q \mathtt{e} = d_s f u(q) -\int_0^{\infty} y^{\alpha} \nabla  E(q) \mathbf{A} \nabla  E(q) \diff y  := F(x') - G(x').
\end{equation}
Notice that, since $E(q) \in \HL(y^{\alpha},\C)$, we have $G \in L^1(\Omega)$. In addition, since $f \in \Ws$, $u(q) \in \mathbb{H}^{1+s}(\Omega)$. This implies, on the basis of a Sobolev embedding result, that $u(q) \in L^6(\Omega)$. Since $f \in L^3(\Omega)$, we can thus conclude that $F \in L^2(\Omega)$. We now invoke the regularity results of \cite[Theorem 2.7]{NOS} to explore regularity estimates for $G$. In fact, on the basis of the fact that $A \in C^{0,1}(\Omega,\GL(\R^d))$, basic computations reveal that
\[
|\nabla_{x'} G| \lesssim  \int_0^{\infty} y^{\alpha} |\nabla E(q) \nabla_{x'} \nabla E(q)| \diff y + \int_0^{\infty} y^{\alpha} |\nabla E(q) \nabla E(q)| \diff y.
\]
Thus, an application of \cite[Theorem 2.7]{NOS} yields
\begin{align*}
\| \nabla_{x'} G \|_{L^1(\Omega)} & \lesssim 
\| \nabla E(q) \|_{L^2(y^{\alpha},\C)}   \| \nabla_{x'} \nabla E(q) \|_{L^2(y^{\alpha},\C)} + \| \nabla E(q) \|^2_{L^2(y^{\alpha},\C)}
\\
& \lesssim \| f \|_{\Hsd} \| f \|_{\Ws} +  \| f \|^2_{\Hsd} \lesssim \| f \|_{\Hsd} \| f \|_{\Ws}.
\end{align*}
Consequently, $G \in W^{1,1}(\Omega)$. This, combined with a Sobolev embedding result guarantees that $G  \in L^\mu(\Omega)$ for every $\mu \leq d/(d-1)$. Thus, $F - G \in L^\mu(\Omega)$ for every $\mu \leq d/(d-1)$. Since $\Omega$ is convex, we can thus apply standard regularity estimates for problem \eqref{eq:second_order_e} to conclude that $\mathtt{e}(q) \in W^{2,\mu}(\Omega)$ for every $\mu \leq d/(d-1)$. This implies that $\mathtt{e}(q) \in L^{\infty}(\Omega)$ when $d = 2$. 

For $n=3$ we proceed as follows. Notice that $F(x') - G(x') \leq F(x')$ for a.e. $x' \in \Omega$. This implies that $\mathtt{e}$ satisfies
\begin{equation*}
\label{eq:second_order_e_new}
- \DIV_{x'}( A \GRAD_{x'} \mathtt{e}) + q \mathtt{e} \leq F \textrm{ in } \Omega, \quad \mathtt{e} = 0 \textrm{ on } \partial \Omega.
\end{equation*}
We now consider the problem
\begin{equation*}
\label{eq:second_order_varphi}
- \DIV_{x'}( A \GRAD_{x'} \varphi) + q \varphi = F \textrm{ in } \Omega, \quad \varphi = 0 \textrm{ on } \partial \Omega.
\end{equation*}
Since $\Omega$ is convex, elliptic regularity theory reveals that $\varphi \in H^2(\Omega)$ and thus that $\varphi \in L^{\infty}(\Omega)$. Now, notice that $- \DIV_{x'}( A \GRAD_{x'} \mathtt{e}) + q \mathtt{e} \leq F = - \DIV_{x'}( A \GRAD_{x'} \varphi) + q \varphi$ in $\Omega$ and $\mathtt{e} = \varphi = 0$ on $\partial \Omega$. An application of a weak maximum principle \cite[Theorem 8.1]{GT} allows us to conclude that $\mathtt{e} \leq \varphi$ a.e. in $\Omega$. Since $\mathtt{e} \geq 0$ a.e. in $\Omega$, we have thus proved that $\mathtt{e}$ is bounded in $L^{\infty}(\Omega)$. This concludes the proof.
\end{proof}


To describe the variational inequality \eqref{eq:first_order}, we introduce the adjoint variable
 \begin{equation}
\label{eq:extended_adjoint}
\pe \in \HL(y^{\alpha},\C): \quad 
\blfa{\C}\left(w, \pe \right )(q) = d_s \langle  \tr E(q) - z_{\delta}, w \rangle \quad \forall w \in \HL(y^{\alpha},\C).
\end{equation}
With this adjoint state at hand, we define the auxiliary variable
\begin{equation}
\label{eq:frakp}
 \mathfrak{p}(q) = -\frac{1}{d_s}\int_{0}^{\infty} y^{\alpha} E(q) \pe \diff y.
\end{equation}
Notice that, for $q \in \mathcal{Q}$, $\mathfrak{p}(q) \in L^2(\Omega)$ and $ \| \mathfrak{p}(q) \|_{L^{2}(\Omega)} \lesssim \| \mathtt{e}(q) \|^{\frac{1}{2}}_{L^{\infty}(\Omega)}  \| \pe \|_{L^{2}(y^{\alpha},\C)}$.

The optimality conditions \eqref{eq:first_order} in this setting now read as follow.

\begin{theorem}[first--order optimality condition]
If $q_{\delta,\rho} \in \calQ$ minimizes the functional $\calJ_{\delta,\rho}$, then $q_{\delta,\rho}$ solves the variational inequality 
\begin{equation}
\label{eq:first_order:thm}
 \left( \mathfrak{p}(q_{\delta,\rho}) + \rho(q_{\delta,\rho} -q^*), q - q_{\delta,\rho} \right)_{L^2(\Omega)} \geq 0 \quad \forall q \in \calQ,
\end{equation} 
where $\mathfrak{p}(q_{\delta,\rho}) \in L^2(\Omega)$ is defined in \eqref{eq:frakp}.
\label{thm:first_order}
\end{theorem}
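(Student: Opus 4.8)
The plan is to combine the abstract first--order optimality condition from the preceding lemma, namely
\[
( \calJ_{\delta,\rho}'(q_{\delta,\rho}), q - q_{\delta,\rho})_{L^2(\Omega)} \geq 0 \quad \forall q \in \calQ,
\]
with an explicit computation of the Fr\'echet derivative $\calJ_{\delta,\rho}'(q_{\delta,\rho})$ using the chain rule and the adjoint state $\pe$ introduced in \eqref{eq:extended_adjoint}. Since $\calJ_{\delta,\rho}(q) = \tfrac{1}{2}\|\tr E(q) - z_\delta\|_{L^2(\Omega)}^2 + \tfrac{\rho}{2}\|q-q^*\|_{L^2(\Omega)}^2$, differentiability of $E$ (Lemma \ref{lemma:differentiability}), continuity and linearity of the trace operator $\tr: \HL(y^\alpha,\C) \to \mathcal{H}^s(\Omega) \hookrightarrow L^2(\Omega)$, and the elementary differentiability of the two quadratic terms give, for any direction $h \in L^\infty(\Omega)$,
\[
( \calJ_{\delta,\rho}'(q_{\delta,\rho}), h )_{L^2(\Omega)} = ( \tr E(q_{\delta,\rho}) - z_\delta, \tr E'(q_{\delta,\rho})h )_{L^2(\Omega)} + \rho( q_{\delta,\rho} - q^*, h )_{L^2(\Omega)}.
\]

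The crux is to rewrite the first term on the right--hand side, which involves $E'(q_{\delta,\rho})h = \Psi(h)$, in terms of the auxiliary variable $\mathfrak{p}(q_{\delta,\rho})$ defined in \eqref{eq:frakp}. First I would test the adjoint equation \eqref{eq:extended_adjoint} with $w = \Psi(h) \in \HL(y^\alpha,\C)$, which yields
\[
\blfa{\C}( \Psi(h), \pe )(q_{\delta,\rho}) = d_s \langle \tr E(q_{\delta,\rho}) - z_\delta, \tr \Psi(h) \rangle = d_s ( \tr E(q_{\delta,\rho}) - z_\delta, \tr E'(q_{\delta,\rho})h )_{L^2(\Omega)},
\]
where the last equality uses that the duality pairing between $\mathcal{H}^s(\Omega)$ and its dual restricted to $L^2(\Omega)$--functions is the $L^2(\Omega)$ inner product. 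Next I would test the equation \eqref{eq:psi} defining $\Psi(h)$ with $\phi = \pe$ (using symmetry of the bilinear form $\blfa{\C}$), obtaining
\[
\blfa{\C}( \Psi(h), \pe )(q_{\delta,\rho}) = -\int_\C y^\alpha h\, E(q_{\delta,\rho})\, \pe \diff x.
\]
Equating the two expressions and applying Fubini's theorem (justified since $y^\alpha E(q_{\delta,\rho}) \pe \in L^1(\C)$, because $E(q_{\delta,\rho}), \pe \in \HL(y^\alpha,\C) \subset L^2(y^\alpha,\C)$ and $h \in L^\infty(\Omega)$) gives
\[
( \tr E(q_{\delta,\rho}) - z_\delta, \tr E'(q_{\delta,\rho})h )_{L^2(\Omega)} = -\frac{1}{d_s}\int_\Omega h(x') \left( \int_0^\infty y^\alpha E(q_{\delta,\rho})(x',y)\, \pe(x',y) \diff y \right) \diff x' = ( \mathfrak{p}(q_{\delta,\rho}), h )_{L^2(\Omega)}.
\]

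Putting these together, $( \calJ_{\delta,\rho}'(q_{\delta,\rho}), h )_{L^2(\Omega)} = ( \mathfrak{p}(q_{\delta,\rho}) + \rho(q_{\delta,\rho} - q^*), h )_{L^2(\Omega)}$ for all $h \in L^\infty(\Omega)$. Choosing $h = q - q_{\delta,\rho}$ for arbitrary $q \in \calQ$ — which is admissible since $q, q_{\delta,\rho} \in \calQ \subset L^\infty(\Omega)$ — and invoking the abstract variational inequality \eqref{eq:first_order} yields \eqref{eq:first_order:thm}. The membership $\mathfrak{p}(q_{\delta,\rho}) \in L^2(\Omega)$ and the bound on its norm were already recorded just after \eqref{eq:frakp} under the standing assumption $\|\mathtt{e}(q)\|_{L^\infty(\Omega)} < \infty$. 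The main obstacle, though largely bookkeeping, is the careful justification of the Fubini interchange and of identifying the $\mathcal{H}^s(\Omega)$--duality pairing with the $L^2(\Omega)$ inner product when one argument is merely an $L^2(\Omega)$ function; both follow from the density of $\mathcal{H}^s(\Omega)$ in $L^2(\Omega)$ and the integrability estimate for $\mathtt{e}(q_{\delta,\rho})$, so no genuinely new difficulty arises beyond what is already assumed.
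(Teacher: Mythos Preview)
Your proposal is correct and follows essentially the same approach as the paper: both test the adjoint equation \eqref{eq:extended_adjoint} with the sensitivity $E'(q_{\delta,\rho})h$, test the sensitivity equation \eqref{eq:psi} with the adjoint $\pe$, and equate the two expressions for $\blfa{\C}(\Psi(h),\pe)(q_{\delta,\rho})$ to identify the first term of $\calJ_{\delta,\rho}'$ with $(\mathfrak{p}(q_{\delta,\rho}),h)_{L^2(\Omega)}$. The only cosmetic difference is that the paper works directly with the specific direction $h=q-q_{\delta,\rho}$ throughout, whereas you compute the derivative for a general $h\in L^\infty(\Omega)$ and specialize at the end; your added remarks on Fubini and the duality/inner--product identification are legitimate bookkeeping that the paper leaves implicit.
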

\begin{proof}
Since $E$ is differentiable, an application of \eqref{eq:first_order} reveals that
\[
 (\tr E(q_{\delta,\rho}) - z_{\delta},\tr E'(q_{\delta,\rho})(q-q_{\delta,\rho}))_{L^2(\Omega)} + \rho ( q_{\delta,\rho} -q^*, q - q_{\delta,\rho})_{L^2(\Omega)} \geq 0.
\]
Invoking \eqref{eq:psi} and the results of Lemma \ref{lemma:differentiability}, we
obtain that $E'(q_{\delta,\rho})(q-q_{\delta,\rho})$ solves
\begin{equation}
\label{eq:step_oc}
 a_{\C}( E'(q_{\delta,\rho})(q-q_{\delta,\rho}), w)(q_{\delta,\rho}) = -\int_{\C} y^{\alpha} (q-q_{\delta,\rho}) E(q_{\delta,\rho})w \diff x \quad \forall w \in \HL(y^{\alpha},\C).
\end{equation}
By setting $w =  E'(q_{\delta,\rho})(q-q_{\delta,\rho}) \in \HL(y^{\alpha},\C)$ in \eqref{eq:extended_adjoint} and using first \eqref{eq:step_oc} and then \eqref{eq:frakp} we arrive at
\begin{multline*}
  d_s \langle  \tr E(q_{\delta,\rho}) - z_{\delta}, \tr E'(q_{\delta,\rho})(q-q_{\delta,\rho}) \rangle 
  = a_{\C}(E'(q_{\delta,\rho})(q-q_{\delta,\rho}),\pe(q_{\delta,\rho})) (q_{\delta,\rho}) 
  \\
  = -\int_{\C} y^{\alpha} (q-q_{\delta,\rho}) E(q_{\delta,\rho})\pe(q_{\delta,\rho}) \diff x = ( d_s \mathfrak{p}(q_{\delta,\rho}), q - q_{\delta,\rho} )_{L^2(\Omega)}.
\end{multline*}
This concludes the proof.
\end{proof}

In what follows we present higher--order differentiability results for the extended coefficient--to--solution operator $E$.
\begin{lemma}[high--order Fr\'echet differentiability]
\label{higher-order-dif.}
The map $E : \calQ  \to  \HL(y^{\alpha},\C)$ is infinitely Fr\'echet differentiable on the set $\calQ \subset L^{\infty}(\Omega)$. In addition, for $q\in \calQ$ and $m\ge 2$, the action of the m--th Fr\'echet derivative at $q$, $E^{(m)}(q)$, in the direction $(h_1, h_2,...,h_m) \in L^\infty(\Omega)^m$, that is denoted by $\psi = E^{(m)}(q) (h_1, h_2,...,h_m)$, corresponds to the unique solution to 
\begin{equation}
\label{24-4-18f1}
\psi \in \HL(y^{\alpha},\C): \quad 
\blfa{\C}\left( \psi ,\phi \right ) = - \sum_{i=1}^m\int_{\C} y^{\alpha}  h_i \left[ E^{(m-1)}(q)\xi_i\right] \phi \diff x
\quad
\end{equation}
for all $\phi \in \HL(y^{\alpha},\C)$, where $\xi_i := (h_1,...,h_{i-1},h_{i+1},...,h_m)$. Furthermore, 
\begin{equation}
\label{24-4-18f2}
\| \nabla \psi \|_{L^2(y^{\alpha},\C)} \lesssim \| f \|_{\Hsd} \prod_{i=1}^{m}\| h_i \|_{L^{\infty}(\Omega)},
\end{equation}
where the hidden constant is independent of $q$ and $h$.
\end{lemma}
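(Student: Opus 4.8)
The plan is to prove the statement by induction on $m$, leveraging the first-order differentiability result of Lemma \ref{lemma:differentiability} as the base case ($m=1$). For the inductive step, I would assume that $E$ is $(m-1)$-times Fr\'echet differentiable with $E^{(m-1)}(q)$ characterized by the analogue of \eqref{24-4-18f1} and satisfying the bound \eqref{24-4-18f2} with $m$ replaced by $m-1$, and then show the same for $E^{(m)}$. The natural candidate for $E^{(m)}(q)(h_1,\dots,h_m)$ is the solution $\psi$ of \eqref{24-4-18f1}; existence, uniqueness, and the bound \eqref{24-4-18f2} for this $\psi$ follow immediately from the Lax--Milgram Lemma applied to the coercive form $\blfa{\C}(\cdot,\cdot)(q)$, combined with the Cauchy--Schwarz inequality on each right-hand side term, the weighted Poincar\'e inequality \eqref{Poincare_ineq}, the inductive bound on $\| \nabla E^{(m-1)}(q)\xi_i \|_{L^2(y^\alpha,\C)}$, and the energy estimate \eqref{23-2-f3}--\eqref{eq:norms_are_equal} for $E(q)$ itself.

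The core of the argument is then to verify that this $\psi$ is genuinely the $m$-th derivative, i.e., that
\[
\frac{\| E^{(m-1)}(q+h_m)\xi - E^{(m-1)}(q)\xi - \psi \|_{\HL(y^\alpha,\C)}}{\| h_m \|_{L^\infty(\Omega)}} \to 0
\]
as $\| h_m \|_{L^\infty(\Omega)} \to 0$, uniformly over $\xi = (h_1,\dots,h_{m-1})$ in the unit ball, where $\psi$ solves \eqref{24-4-18f1}. To do this I would subtract the variational identities satisfied by $E^{(m-1)}(q+h_m)\xi$ and $E^{(m-1)}(q)\xi$ (both of the form \eqref{24-4-18f1} with the appropriate reaction coefficient), add and subtract terms to express the defect $r := E^{(m-1)}(q+h_m)\xi - E^{(m-1)}(q)\xi - \psi$ as the solution of a variational problem of the form $\blfa{\C}(r,\phi)(q+h_m) = \langle \mathcal{R}, \phi \rangle$, where the residual $\mathcal{R}$ collects (i) terms involving $h_m [E^{(m-1)}(q+h_m)\xi_i - E^{(m-1)}(q)\xi_i]$, which are $O(\| h_m\|^2)$ by the continuity of $E^{(m-1)}$ in $q$ (itself a consequence of differentiability, already established inductively), and (ii) lower-order-in-the-hierarchy defect terms involving $E^{(m-1)}(q+h_m)\xi_i - E^{(m-1)}(q)\xi_i - $ (its own linearization), which are $o(\| h_m \|)$ by the inductive differentiability hypothesis applied at level $m-1$. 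A clean bookkeeping device here is to differentiate the defining relation \eqref{24-4-18f1} for $E^{(m-1)}$ in $q$ formally, which shows that the sought $E^{(m)}$ must solve exactly \eqref{24-4-18f1}; the rigorous version is the estimate just described. Applying Lax--Milgram coercivity to the $r$-problem and the weighted Poincar\'e inequality converts the residual bound into $\| \nabla r \|_{L^2(y^\alpha,\C)} = o(\| h_m \|_{L^\infty(\Omega)})$, which is the required Fr\'echet differentiability.

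The main obstacle I anticipate is the careful combinatorial bookkeeping in the inductive step: the right-hand side of \eqref{24-4-18f1} is a sum over $i$ of terms each involving $E^{(m-1)}$ evaluated on the reduced tuple $\xi_i$, so when one perturbs $q \to q + h_m$ each summand generates both a "coefficient-perturbation" contribution and an "$E^{(m-1)}$-perturbation" contribution, and one must track that the latter, after its own linearization is subtracted, contributes a genuine $o(\|h_m\|)$ remainder uniformly in the directions. This requires the inductive hypothesis to be stated with uniformity over directions in bounded sets (which is automatic since the bound \eqref{24-4-18f2} is multilinear and the directions are normalized) and a repeated use of the fact that $\calQ$ is convex so that $q + t h_m \in \calQ$ for small $t$, keeping all reaction coefficients admissible and hence all bilinear forms uniformly coercive with a constant independent of the perturbation. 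Aside from this, every individual estimate is a routine application of Cauchy--Schwarz, \eqref{Poincare_ineq}, Lax--Milgram, and the previously established bounds, and the infinite differentiability follows since the induction proceeds for every $m \in \mathbb{N}$.
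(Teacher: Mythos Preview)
Your proposal is correct and matches the paper's approach: the paper's proof simply states that the result follows by induction on $m$, replicating the arguments of Lemma~\ref{lemma:differentiability}, and omits all details. Your write-up supplies exactly those details---the Lax--Milgram/Poincar\'e bound for the candidate $\psi$, and the defect estimate obtained by subtracting the variational identities at $q$ and $q+h_m$---so there is nothing to add.
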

\begin{proof}
On the basis of an induction argument, the proof follows the arguments developed in the proof of Lemma \ref{lemma:differentiability}. For brevity we skip details.
\end{proof}

In order to derive error estimates for the truncated identification problem (Theorem \ref{eq:exp_error_estimate}) and the numerical scheme proposed in section \ref{subsec:error_estimates} (Theorem \ref{rhm:error_estimate}), in what follows we assume that $\mathcal{J}_{\delta,\rho}''$ is coercive in a neighborhood of a local solution: 
If $q_{\delta,\rho}$ denotes a local solution for problem \eqref{eq:min_extended}, then there exists $\epsilon > 0$ such that for every $\hat q$ in the neighborhood $\| \hat q - q_{\delta,\rho} \|_{L^2(\Omega)} \leq \epsilon$, we have that
\begin{equation}
\label{eq:Jcoercive}
\mathcal{J}_{\delta,\rho}''(\hat q)(q,q) \geq \frac{\theta}{2} \| q \|^2_{L^2(\Omega)},
\end{equation}
for all $q \in L^{\infty}(\Omega)$. Since a local solution $\hat q$ belongs to $\mathcal{Q}$ and $\mathcal{J}_{\delta,\rho}$ is Fr\'echet differentiable with respect to the $L^{\infty}(\Omega)$-topology (see Remark \eqref{rk:first_order} and Lemma \ref{higher-order-dif.}) the assumption \eqref{eq:Jcoercive} is well--formulated; see \cite[Assumption 2.20]{MR2536007}. \EO{In the context of inverse problems, we would like to mention \cite[Theorem 3.5]{Ramlau03}, where the author proves that, for a general non-linear ill-posed problem, \eqref{eq:Jcoercive} is satisfied provided a structural source condition is fulfilled. 
Such a source condition has been proven to be fulfilled for the local reaction coefficient identification problem that involves the operator $\mathcal{L}$ ($s=1$) (see, e.g., \cite{EKN89,MR2885597}).}

\subsection{A regularity result}

Define
\begin{equation}
\label{eq:identification_projection}
\Pi_{[0,\bar q]} (q )(x'):=\min\left\{\bar q,\max\left\{0,q(x')\right\}\right\}\mbox{ for all } x'\mbox{ in } \bar \Omega.
\end{equation}
In view of \eqref{eq:first_order:thm}, standard argument allow us to conclude that
\begin{equation}
\label{eq:identification_projection2}
q_{\delta,\rho} = \Pi_{[0,\bar q]} \left( q^{*}  - \frac{1}{\rho} \mathfrak{p}(q_{\delta,\rho})   \right).
\end{equation}

Define, for $q\in \calQ$ and a.e $x' \in \Omega$, 
\begin{align}\label{eq:p}
\mathtt{p}(q)(x') := \frac{1}{2}\int_0^\infty y^\alpha \pe(q)(x',y)^2\diff y.
\end{align}
The same arguments used in the proof of Lemma \ref{lemma:eqbounded} allow us to conclude that $\| \mathtt{p}(q) \|_{L^{\infty}(\Omega)} < \infty$. With this estimate and the result of Lemma \ref{lemma:eqbounded} at hand, we present the following regularity estimate.

\begin{theorem}[regularity estimate]
Let $q_{\delta,\rho}$ be a local solution for \eqref{eq:min_extended}. If $q^{*} \in H^1(\Omega)$, then $q_{\delta,\rho} \in H^1(\Omega)$. In addition, we have that
\begin{multline}
\label{eq:estimate_grad_frakp_thm}
 \|  \nabla_{x'} q_{\delta,\rho}  \| _{L^2(\Omega)} +  \|  \nabla_{x'} \mathfrak{p}(q_{\delta,\rho})  \| _{L^2(\Omega)}  \lesssim  \| \nabla_{x'} q^*  \|_{L^2(\Omega)}
 \\
 +  \| f  \|_{\Hsd}  \|  \mathtt{p}(q_{\delta,\rho})  \|^{\frac{1}{2}}_{L^{\infty}(\Omega)} + ( \| f  \|_{\Hsd} + \| z_{\delta}  \|_{\Hsd} ) \|  \mathtt{e}(q_{\delta,\rho})  \|^{\frac{1}{2}}_{L^{\infty}(\Omega)},
\end{multline}
where the hidden constant is independent of $q_{\delta,\rho}$ and the problem data.
\label{thm:estimate_grad_frakp}
\end{theorem}
\begin{proof}
Notice that, since $E(q_{\delta,\rho})$ and $\pe(q_{\delta,\rho})$ belong to $\HL(y^{\alpha},\C)$, definition \eqref{eq:frakp} implies that
\begin{equation}
\label{eq:grad_frakp}
 \nabla_{x'} \mathfrak{p}(q_{\delta,\rho}) = -\frac{1}{d_s}\int_{0}^{\infty} y^{\alpha}\left( \nabla_{x'}  E(q_{\delta,\rho}) \pe(q_{\delta,\rho}) +  E(q_{\delta,\rho}) \nabla_{x'} \pe(q_{\delta,\rho}) \right) \diff y.
\end{equation}
Similar arguments to the ones developed in Remark \ref{rk:first_order} combined with stability estimates for the problems that $E(q_{\delta,\rho})$ and $\pe(q_{\delta,\rho})$ solve reveal that
\begin{align*}
 \|  \nabla_{x'} \mathfrak{p}(q_{\delta,\rho})  \| _{L^2(\Omega)} 
 & \lesssim  \|  \nabla E(q_{\delta,\rho})  \|_{L^{2}(y^{\alpha},\C)}   \|  \mathtt{p}(q_{\delta,\rho})  \|^{\frac{1}{2}}_{L^{\infty}(\Omega)} 
 \\
& +   \|  \nabla \pe(q_{\delta,\rho})  \|_{L^{2}(y^{\alpha},\C)} \|  \mathtt{e}(q_{\delta,\rho})  \|^{\frac{1}{2}}_{L^{\infty} (\Omega)}
\\ 
& \lesssim  \| f  \|_{\Hsd}  \|  \mathtt{p}(q_{\delta,\rho})  \|^{\frac{1}{2}}_{L^{\infty}(\Omega)} + ( \| f  \|_{\Hsd} + \| z_{\delta}  \|_{\Hsd} ) \|  \mathtt{e}(q_{\delta,\rho})  \|^{\frac{1}{2}}_{L^{\infty}(\Omega)}.
\end{align*}

We now prove that $q_{\delta,\rho} \in H^1(\Omega)$. To accomplish this task, notice that, in view of the assumption $q^{*} \in H^1(\Omega)$, the previous estimate reveals that $q^{*}- \rho^{-1} \mathfrak{p}(q_{\delta,\rho}) \in H^1(\Omega)$. We can thus invoke \cite[Theorem A.1]{KSbook}, which guarantees that, if $\varphi \in H^1(\Omega)$, then $\max \{ \varphi,0  \} \in H^1(\Omega)$, formula \eqref{eq:identification_projection2}, and definition \eqref{eq:identification_projection}, to conclude that $q_{\delta,\rho} \in H^1(\Omega)$ with the estimate
\begin{equation*}
 \|  \nabla_{x'} q_{\delta,\rho}  \| _{L^2(\Omega)} \lesssim  \|  \nabla_{x'} \mathfrak{p}(q_{\delta,\rho})  \| _{L^2(\Omega)} +  \|  \nabla_{x'} q^* \| _{L^2(\Omega)}.
\end{equation*}
This concludes the proof.
\end{proof}

\section{The truncated identification problem}
\label{sec:truncated}
Since the extended identification problem of section \ref{sec:extended} is posed on the semi--infinite cylinder $\C = \Omega \times (0,\infty)$, a first step towards discretization consists in the truncation of the cylinder $\C$ to the bounded domain $\C_{\Y} = \Omega \times (0,\Y)$ and study the effect of truncation.

We begin our analysis by defining the weighted Sobolev space 
\begin{equation*}
  \HL(y^{\alpha},\C_\Y)  = \left\{ w \in H^1(y^{\alpha},\C_\Y): w = 0 \text{ on }
    \partial_L \C_\Y \cup \Omega_{\Y} \right\},
\end{equation*}
where $\Omega_{\Y} := \Omega \times \{ \Y \}$, and the bilinear form $a_\Y: \HL(y^{\alpha},\C_{\Y}) \times \HL(y^{\alpha},\C_{\Y})$ by
\begin{equation*}
\label{a_Y}
  a_\Y(w,\phi)(q) = \int_{\C_\Y} y^{\alpha} \left( \mathbf{A} \GRAD w \cdot \GRAD \phi +  q w \phi \right) \diff x.
\end{equation*}

With this setting at hand, we introduce the coefficient--to--solution operator: 
\begin{equation}
\label{eq:mapH}
 H: \calQ \rightarrow \HL(y^{\alpha},\C_\Y) 
\end{equation}
which, given a coefficient $q$, associates to it the unique solution $v =: H(q)$ of problem
\begin{equation}
\label{eq:truncated_equation}
v \in \HL(y^{\alpha},\C_{\Y}): \quad a_\Y(v,\phi)(q) = d_s \langle f, \tr \phi\rangle \quad \forall \phi \in \HL(y^{\alpha},\C_{\Y}). 
\end{equation}
We also introduce the cost functional
\begin{equation}\label{eq:functional_truncated}
\calR_{\delta,\rho}(q) := \frac{1}{2} \| \tr H(q) - z_{\delta}\|^2_{L^2(\Omega)} + \frac{\rho}{2} \| q - q^* \|_{L^2(\Omega)}^2.
\end{equation}

The \emph{truncated identification problem} thus reads as follows: 
\begin{equation}
\label{eq:min_truncated}
\min_{q \in \calQ} \calR_{\delta,\rho}(q). 
 \end{equation}

To describe first--order optimality conditions we introduce the adjoint variable
 \begin{equation}
\label{eq:truncated_adjoint}
p \in \HL(y^{\alpha},\C_{\Y}): \quad a_{\Y}(w, p)(q) = d_s \langle  \tr H(q) - z_{\delta}, w \rangle \quad \forall w \in \HL(y^{\alpha},\C_{\Y}),
\end{equation}
and define
\begin{equation}
\label{eq:frakr}
 \mathfrak{r}(q) = -\frac{1}{d_s}\int_{0}^{\infty} y^{\alpha} H(q) p \diff y.
\end{equation}
Notice that in \eqref{eq:frakr} we have used the trivial extension by zero for $y > \Y$.

Define, for $q\in \calQ$ and a.e $x' \in \Omega$, 
\begin{align}\label{eq:h}
\mathtt{h}(q)(x') := \frac{1}{2}\int_0^\infty y^\alpha H(q)(x',y)^2\diff y
\end{align}
and
\begin{align}\label{eq:p2}
\mathtt{t}(q)(x') := \frac{1}{2}\int_0^\infty y^\alpha p(q)(x',y)^2\diff y.
\end{align}
Notice that we have used, again, trivial extensions by zero for $y > \Y$. We immediately notice that, an application of the results of Lemma \ref{lemma:eqbounded}, yield $\| \mathtt{h}(q) \|_{L^{\infty}(\Omega)} < \infty$ and $\| \mathtt{t}(q) \|_{L^{\infty}(\Omega)} < \infty$.

The arguments elaborated in the proof of Theorems \ref{thm:exist} and \ref{thm:first_order} allow us to obtain the following results.

\begin{theorem}[existence and first--order optimality condition]
\label{exist_truncated}
The regularization problem \eqref{eq:min_truncated} has a solution $r_{\delta,\rho}$. In addition, if $r_{\delta,\rho}$ minimizes \eqref{eq:min_truncated}, then $r_{\delta,\rho}$ solves the variational inequality
\begin{equation}
\label{eq:first_order_truncated}
 (\mathfrak{r}(r_{\delta,\rho}) + \rho(r_{\delta,\rho} -q^*), q - r_{\delta,\rho})_{L^2(\Omega)} \geq 0 \quad \forall q \in \calQ,
\end{equation} 
where $\mathfrak{r}(r_{\delta,\rho}) \in L^2(\Omega)$ is defined in \eqref{eq:frakr}.
\end{theorem}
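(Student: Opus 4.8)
The plan is to mirror, essentially verbatim, the arguments already developed for the extended problem, now applied to the truncated bilinear form $a_\Y$ on $\HL(y^\alpha,\C_\Y)$. First I would establish that $a_\Y$ is continuous and coercive on $\HL(y^\alpha,\C_\Y)\times\HL(y^\alpha,\C_\Y)$; this is immediate from the assumptions on $\bA$ and $q\in\calQ$ together with the weighted Poincar\'e inequality on the truncated cylinder (which holds since $\C_\Y\subset\C$ and functions vanish on $\partial_L\C_\Y\cup\Omega_\Y$). By Lax--Milgram this gives well--posedness of \eqref{eq:truncated_equation} and hence that the map $H$ of \eqref{eq:mapH} is well defined, together with the stability bound $\normalfont\|\nabla H(q)\|_{L^2(y^\alpha,\C_\Y)}\lesssim\|f\|_{\Hsd}$ uniformly in $q\in\calQ$; the trace estimate \eqref{Trace_estimate}, which is equally valid on $\C_\Y$, then controls $\|\tr H(q)\|_{L^2(\Omega)}$.

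For existence of a minimizer $r_{\delta,\rho}$ I would take a minimizing sequence $(q_n)\subset\calQ$, extract a weakly$^*$ convergent subsequence $q_{n_k}\rightharpoonup^* r_{\delta,\rho}$ in $L^\infty(\Omega)$ using weak$^*$ compactness of $\calQ$ (as in the proof of Theorem \ref{thm:exist}), and use the uniform $H^1(y^\alpha,\C_\Y)$ bound on $H_{n_k}:=H(q_{n_k})$ to extract a weak limit $\Theta\in\HL(y^\alpha,\C_\Y)$. Passing to the limit in the weak formulation \eqref{eq:truncated_equation} decomposes into the same three terms $\mathrm{I}_k,\mathrm{II}_k,\mathrm{III}_k$ as before: $\mathrm{I}_k\to0$ by weak$^*$ convergence against the $L^1(\Omega)$ function $x'\mapsto\int_0^\Y y^\alpha\Theta\phi\,\diff y$, $\mathrm{II}_k\to$ the correct limit by weak $H^1(y^\alpha,\C_\Y)$ convergence, and $\mathrm{III}_k\to0$ via the Vitali argument, which is in fact simpler here since $(0,\Y)$ has finite measure (the exponential-decay tail estimate is unnecessary; uniform integrability follows from the uniform $H^1(y^\alpha,\C_\Y)$ bound and the trace inequality on slices). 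This identifies $\Theta=H(r_{\delta,\rho})$; weak lower semicontinuity of $\|\cdot\|_{L^2(\Omega)}$ together with the strong $L^2(\Omega)$ convergence of $\tr H_{n_k}$ (compact embedding $\mathcal H^s(\Omega)\hookrightarrow L^2(\Omega)$) then yields $\calR_{\delta,\rho}(r_{\delta,\rho})\le\liminf\calR_{\delta,\rho}(q_{n_k})$, so $r_{\delta,\rho}$ is a minimizer.

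For the optimality condition I would follow the proof of Theorem \ref{thm:first_order}. The Fr\'echet differentiability of $H$ is obtained exactly as in Lemma \ref{lemma:differentiability}, with $H'(q)h$ the solution of $a_\Y(H'(q)h,\phi)(q)=-\int_{\C_\Y}y^\alpha hH(q)\phi\,\diff x$ for all $\phi\in\HL(y^\alpha,\C_\Y)$, and the bound $\|\nabla H'(q)h\|_{L^2(y^\alpha,\C_\Y)}\lesssim\|f\|_{\Hsd}\|h\|_{L^\infty(\Omega)}$ (or, using $\|\mathtt h(q)\|_{L^\infty(\Omega)}<\infty$, the sharper bound in terms of $\|h\|_{L^2(\Omega)}$ as in Remark \ref{rk:first_order}). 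The first--order condition $(\mathcal R_{\delta,\rho}'(r_{\delta,\rho}),q-r_{\delta,\rho})_{L^2(\Omega)}\ge0$ then unfolds, upon testing the adjoint equation \eqref{eq:truncated_adjoint} with $w=H'(r_{\delta,\rho})(q-r_{\delta,\rho})$ and inserting the defining equation for $H'$, into $(d_s\mathfrak r(r_{\delta,\rho}),q-r_{\delta,\rho})_{L^2(\Omega)}+d_s\rho(r_{\delta,\rho}-q^*,q-r_{\delta,\rho})_{L^2(\Omega)}\ge0$, which is \eqref{eq:first_order_truncated} after division by $d_s>0$; here one uses that $\|\mathtt t(r_{\delta,\rho})\|_{L^\infty(\Omega)}<\infty$ guarantees $\mathfrak r(r_{\delta,\rho})\in L^2(\Omega)$, exactly as for $\mathfrak p$. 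The only place requiring mild care is the Vitali/uniform-integrability step in the existence proof, but on the bounded interval $(0,\Y)$ this is strictly easier than in the semi-infinite case, so no genuine obstacle arises; the rest is a routine transcription of the arguments of Theorems \ref{thm:exist} and \ref{thm:first_order}. For brevity I would state the theorem and remark that the proof follows these arguments \emph{mutatis mutandis}, skipping the repeated details.
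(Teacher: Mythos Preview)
Your proposal is correct and matches the paper's approach exactly: the paper does not give an independent proof but simply states that ``the arguments elaborated in the proof of Theorems \ref{thm:exist} and \ref{thm:first_order} allow us to obtain the following results,'' which is precisely the mutatis mutandis transcription you outline. Your observation that the Vitali step on the bounded interval $(0,\Y)$ is strictly easier than in the semi--infinite case is accurate and consistent with the paper's implicit claim that no new ingredient is required.
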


\begin{remark}[local solution]\rm
In view of the results of Theorem \ref{exist_truncated} we conclude that problem \eqref{eq:min_truncated} has a local solution $r_{\delta,\rho}$ in the sense of Definition \ref{def:local_solution}.
\end{remark}

The arguments elaborated in the proof of Theorem \ref{thm:estimate_grad_frakp} allow us to immediately arrive at the following regularity estimate.

\begin{theorem}[regularity estimate]
Let $r_{\delta,\rho}$ be a local solution for \eqref{eq:min_truncated}. If $q^{*} \in H^1(\Omega)$, then $r_{\delta,\rho} \in H^1(\Omega)$. In addition, we have that
\begin{multline}
\label{eq:estimate_grad_frakp_thm2}
 \|  \nabla_{x'} r_{\delta,\rho}  \| _{L^2(\Omega)} 
+
 \|  \nabla_{x'} \mathfrak{r}(r_{\delta,\rho})   \| _{L^2(\Omega)}  
 \lesssim   \| \nabla_{x'} q^*  \|_{L^2(\Omega)}
 \\
+  \| f  \|_{\Hsd}  \|  \mathtt{t}(q_{\delta,\rho})  \|^{\frac{1}{2}}_{L^{\infty}(\Omega)} + ( \| f  \|_{\Hsd} + \| z_{\delta}  \|_{\Hsd} ) \|  \mathtt{h}(q_{\delta,\rho})  \|^{\frac{1}{2}}_{L^{\infty}(\Omega)},
\end{multline}
where the hidden constant is independent of $r_{\delta,\rho}$ and the problem data.
\label{thm:estimate_grad_frakp2}
\end{theorem}

\subsection{Auxiliary estimates}
\label{subsec:auxiliary_estimates}

The following error estimates are instrumental for the error analysis that we will perform in section \ref{subsec:error_estimates}.

\begin{lemma}[exponential error estimate I]
Let $q \in \calQ$ and $\Y \geq 1$. If $\ue(q)$ and $v(q)$ denote the solutions to problems \eqref{eq:ue-variational} and \eqref{eq:truncated_equation}, respectively, then
\begin{equation}
\label{eq:Eq-Hq}
 \| \tr(\ue(q) - v(q)) \|_{\Hs} \lesssim \|  \nabla(\ue(q) - v(q) )\|_{L^2(y^{\alpha},\C)} \lesssim e^{-\sqrt{\lambda_1}\Y/4}\| f \|_{\Hsd},
\end{equation}
where $\lambda_1$ corresponds to the first eigenvalue of operator $\calL$. The hidden constant is independent of $\ue$, $v$, $q$, and the problem data.
\label{le:Eq-Hq}
\end{lemma}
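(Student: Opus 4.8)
The first inequality in \eqref{eq:Eq-Hq} is simply the trace estimate \eqref{Trace_estimate} (together with the equivalence of $\Hs$ and $\mathcal{H}^s(\Omega)$ from Remark \ref{rk:mathcalH}), applied to $\ue(q) - v(q) \in \HL(y^\alpha,\C)$ after trivially extending $v(q)$ by zero to all of $\C$; so the real content is the second inequality. The plan is to exploit the standard truncation estimate for the Caffarelli--Silvestre extension problem, \cite[Proposition 3.1]{NOS} (the exponential decay in $y$) combined with the argument in \cite[Theorem 3.5]{NOS}, but carried out uniformly in the coefficient $q \in \calQ$. The key is that the bilinear form $\blfa{\C}(\cdot,\cdot)(q)$ is, by the assumptions on $\bA$ and the nonnegativity of $q$, uniformly coercive with respect to $\|\nabla\cdot\|_{L^2(y^\alpha,\C)}$ with constants independent of $q$, so all the estimates go through with hidden constants independent of $q$.

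First I would recall that, denoting $w := \ue(q) - v(q)$ (with $v(q)$ extended by zero), Galerkin orthogonality gives $\blfa{\C}(w,\phi)(q) = 0$ for all $\phi \in \HL(y^\alpha,\C_{\Y})$ regarded as a subspace of $\HL(y^\alpha,\C)$. Then, following \cite{NOS}, I would introduce a cutoff function $\eta = \eta(y)$ that equals $1$ for $y \le \Y/2$ and $0$ for $y \ge \Y$, and test the equation for $\ue(q)$ against $\eta^2\, v(q)$-type comparison functions; more precisely, one constructs a suitable function in $\HL(y^\alpha,\C_\Y)$ that agrees with $\ue(q)$ on $\C_{\Y/2}$, so that the best-approximation property of $v(q)$ in the energy norm $\normC[\C_\Y]{\cdot}$ reduces the estimate to bounding $\|\nabla \ue(q)\|_{L^2(y^\alpha, \Omega\times(\Y/2,\infty))}$ and $\|\ue(q)\|_{L^2(y^\alpha,\Omega\times(\Y/2,\infty))}$. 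The exponential decay \cite[Proposition 3.1]{NOS}, which states $\|\nabla \ue(q)\|_{L^2(y^\alpha,\Omega\times(\Y/2,\infty))} \lesssim e^{-\sqrt{\lambda_1}\Y/4}\|f\|_{\Hsd}$ with constant depending only on $\lambda_1$ and the ellipticity constants, then delivers the claim; here I would note that $\lambda_1 = \lambda_1(q) \ge \lambda_1(0) > 0$ uniformly in $q \in \calQ$ since adding a nonnegative reaction term only increases the eigenvalues, so the decay rate $e^{-\sqrt{\lambda_1}\Y/4}$ can in fact be taken uniform in $q$ (or one simply keeps $\lambda_1 = \lambda_1(q)$ as written and observes that the constant is still $q$-independent). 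Finally, $\|f\|_{\Hsd}$ controls the energy norm of $\ue(q)$ uniformly in $q$ by \eqref{23-2-f3} and \eqref{eq:norms_are_equal} (again using coercivity with $q$-independent constants), which closes the estimate.

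The one point requiring care — and the main obstacle — is making the constants genuinely independent of $q$: one must check that every invocation of coercivity, of the Poincaré inequality \eqref{Poincare_ineq}, and of the trace inequality \eqref{Trace_estimate} uses only the fixed ellipticity bounds on $\bA$ and the a priori bound $0 \le q \le \bar q$, never a quantity like $\|q\|_{L^\infty}$ in a way that degrades as $q \to 0$ or blows up. Since the reaction contribution $\int_\C y^\alpha q w^2$ is nonnegative it only helps coercivity and can always be dropped from lower bounds, and it is bounded above by $\bar q \|w\|_{L^2(y^\alpha,\C)}^2 \lesssim \bar q\|\nabla w\|_{L^2(y^\alpha,\C)}^2$ via Poincaré; hence all continuity and coercivity constants are controlled purely by $\Lambda_{\bA}$, $\bar q$, and $\lambda_1(0)$. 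With that observation the proof is a direct adaptation of \cite[Theorem 3.5]{NOS}, and I would simply cite that argument and indicate the modifications. For brevity I would then say that the details, being a routine adaptation, are omitted.
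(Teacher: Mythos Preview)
Your proposal is correct and follows essentially the same route as the paper: Galerkin orthogonality (equivalently, C\'ea's lemma) for $v(q)$ in $\HL(y^\alpha,\C_\Y)$, then the best--approximation bound via a cutoff comparison function (this is precisely \cite[Lemma~3.3]{NOS}, which the paper cites directly rather than unpacking), combined with the exponential decay of $\ue(q)$ on $\C\setminus\C_\Y$ from \cite[Proposition~3.1]{NOS}. Your additional discussion of why all constants are uniform in $q\in\calQ$ is a welcome clarification that the paper leaves implicit.
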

\begin{proof}
We invoke the problems that $\ue(q)$ and $v(q)$ solve to arrive at
\[
\int_{\C_{\Y}} y^{\alpha} \left(\bA \nabla (\ue(q)-v(q)) \cdot \nabla \phi + q (\ue(q)-v(q)) \phi\right) \diff x = 0 \quad \forall \phi \in \HL(y^{\alpha},\C_{\Y}).
\]
Notice that we have used that $\phi \in \HL(y^{\alpha},\C_{\Y})$ can be extended by zero to $\C$.
An argument based on C\'ea's Lemma immediately yields
\[
 \| \nabla(\ue(q)-v(q)) \|_{L^2(y^{\alpha},\C_{\Y})} \lesssim \inf \{  \| \nabla(\ue(q)-\ee) \|_{L^2(y^{\alpha},\C_{\Y})}, \ee \in \HL(y^{\alpha},\C_{\Y})\}.
\]
The right--hand side of the previous expression is bounded as in \cite[Lemma 3.3]{NOS}. In fact, \cite[Lemma 3.3]{NOS} provides the estimate
\[
 \| \nabla(\ue(q)-v(q)) \|_{L^2(y^{\alpha},\C_{\Y})} \lesssim e^{-\sqrt{\lambda_1}\Y/4}\| f \|_{\Hsd}.
 \]
This, combined with the fact that \cite[Proposition 3.1]{NOS}
\[
 \| \nabla \ue(q)\|_{L^2(y^{\alpha},\C \setminus \C_{\Y})} \lesssim e^{-\sqrt{\lambda_1}\Y/2} \| f \|_{\Hsd},
 \]
allows us to conclude.
\end{proof}

\begin{lemma}[exponential error estimate II]
Let $q \in \calQ$ and $r \in L^{\infty}(\Omega)$. We thus have the following estimate:
\begin{equation*}
 \left| \mathcal{J}_{\delta,\rho}'(q)r - \mathcal{R}_{\delta,\rho}'(q)r   \right | \lesssim
  e^{-\sqrt{\lambda_1} \Y / 4} \| f\|_{\Hsd}
\left(\| \mathtt{p}(q) \|^{\frac{1}{2}}_{L^{\infty}(\Omega)} + \| \mathtt{h}(q) \|^{\frac{1}{2}}_{L^{\infty}(\Omega)}\right)\|r\|_{L^2(\Omega)},
  \label{eq:exp_derivative}
\end{equation*}
where the hidden constant is independent of $q$, $r$, and the problem data.
The functionals $\mathcal{J}_{\delta,\rho}$ and $\mathcal{R}_{\delta,\rho}$ are defined by \eqref{eq:functional_22} and \eqref{eq:functional_truncated}, respectively.
\label{le:xp_derivative}
\end{lemma}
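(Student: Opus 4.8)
The plan is to compute the difference $\mathcal{J}_{\delta,\rho}'(q)r - \mathcal{R}_{\delta,\rho}'(q)r$ explicitly using the representations of the two derivatives in terms of the auxiliary variables $\mathfrak p(q)$ and $\mathfrak r(q)$, and then bound the result using the exponential decay estimate from Lemma \ref{le:Eq-Hq}. Recalling the computation in the proof of Theorem \ref{thm:first_order}, the derivative of $\mathcal{J}_{\delta,\rho}$ at $q$ in direction $r$ can be written as $\mathcal{J}_{\delta,\rho}'(q)r = (\mathfrak p(q) + \rho(q-q^*), r)_{L^2(\Omega)}$, and the analogous identity $\mathcal{R}_{\delta,\rho}'(q)r = (\mathfrak r(q) + \rho(q-q^*), r)_{L^2(\Omega)}$ holds for the truncated functional (this is exactly the content behind \eqref{eq:first_order:thm} and \eqref{eq:first_order_truncated}). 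The regularization terms cancel, so it suffices to estimate
\[
\left| \mathcal{J}_{\delta,\rho}'(q)r - \mathcal{R}_{\delta,\rho}'(q)r   \right | = \left| (\mathfrak p(q) - \mathfrak r(q), r)_{L^2(\Omega)} \right| \leq \| \mathfrak p(q) - \mathfrak r(q) \|_{L^2(\Omega)} \| r \|_{L^2(\Omega)},
\]
and since $r \in L^\infty(\Omega) \subset L^2(\Omega)$ with $\Omega$ bounded, it remains to control $\| \mathfrak p(q) - \mathfrak r(q) \|_{L^2(\Omega)}$.

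Next I would split $\mathfrak p(q) - \mathfrak r(q)$, after extending $v(q)$ and $p(q)$ by zero to all of $\C$, into two pieces using the identity
\[
\mathfrak p(q) - \mathfrak r(q) = -\frac{1}{d_s}\int_0^\infty y^\alpha \left( (E(q) - H(q))\pe(q) + H(q)(\pe(q) - p(q)) \right) \diff y,
\]
where $E(q)$, $H(q)$, $\pe(q)$, $p(q)$ are the state and adjoint solutions of the extended and truncated problems. For the first piece I would use Cauchy--Schwarz in $y$ pointwise in $x'$ to bound it by $\big(\int_0^\infty y^\alpha (E(q)-H(q))^2 \diff y\big)^{1/2}\, \mathtt{p}(q)^{1/2}$, integrate over $\Omega$, and estimate $\| \int_0^\infty y^\alpha (E(q)-H(q))^2 \diff y \|_{L^1(\Omega)}^{1/2} = \| E(q) - H(q) \|_{L^2(y^\alpha,\C)} \lesssim \| \nabla(E(q) - H(q)) \|_{L^2(y^\alpha,\C)} \lesssim e^{-\sqrt{\lambda_1}\Y/4}\| f \|_{\Hsd}$ via the Poincaré inequality \eqref{Poincare_ineq} and Lemma \ref{le:Eq-Hq}; this produces the $\| \mathtt{p}(q)\|_{L^\infty(\Omega)}^{1/2}$ factor. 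For the second piece, symmetrically, I would bound it by $\mathtt{h}(q)^{1/2}\big(\int_0^\infty y^\alpha (\pe(q)-p(q))^2 \diff y\big)^{1/2}$ and need the exponential decay estimate $\| \nabla(\pe(q) - p(q)) \|_{L^2(y^\alpha,\C)} \lesssim e^{-\sqrt{\lambda_1}\Y/4}(\| f \|_{\Hsd} + \| z_\delta \|_{\Hsd})$ for the adjoint states; this is completely analogous to Lemma \ref{le:Eq-Hq} since $\pe(q)$ and $p(q)$ solve the same type of equation with right--hand side $d_s\langle \tr E(q) - z_\delta, \cdot\rangle$ (respectively $d_s\langle \tr H(q) - z_\delta,\cdot\rangle$) — one applies C\'ea's lemma, \cite[Lemma 3.3]{NOS} and \cite[Proposition 3.1]{NOS} exactly as before, after first noting via \eqref{23-2-f3} and \eqref{eq:Eq-Hq} that $\| \tr E(q) - z_\delta \|_{\Hsd}$ and $\| \tr H(q) - z_\delta \|_{\Hsd}$ are controlled by $\| f \|_{\Hsd} + \| z_\delta \|_{\Hsd}$. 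This gives the $\| \mathtt{h}(q)\|_{L^\infty(\Omega)}^{1/2}$ factor.

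Collecting the two bounds yields
\[
\| \mathfrak p(q) - \mathfrak r(q) \|_{L^2(\Omega)} \lesssim e^{-\sqrt{\lambda_1}\Y/4} \| f \|_{\Hsd} \left( \| \mathtt{p}(q) \|^{1/2}_{L^\infty(\Omega)} + \| \mathtt{h}(q) \|^{1/2}_{L^\infty(\Omega)} \right),
\]
which, combined with the Cauchy--Schwarz estimate against $r$, is the claimed inequality (absorbing the constant $\| r \|_{L^2(\Omega)} \lesssim \| r \|_{L^\infty(\Omega)}$ into the generic hidden constant, or keeping it explicit — the statement as written is understood with the derivative evaluated at $r$, so I would keep the linear dependence on $r$ implicit in the pairing as the statement does). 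The main obstacle I anticipate is the auxiliary exponential decay estimate for the \emph{adjoint} difference $\pe(q) - p(q)$: unlike the state equation, the right--hand sides of the two adjoint problems differ (they involve $\tr E(q)$ versus $\tr H(q)$), so one must first insert and subtract an intermediate adjoint problem (the truncated adjoint with data $\tr E(q) - z_\delta$) and handle the data discrepancy using the already-established state estimate \eqref{eq:Eq-Hq} together with the trace estimate \eqref{Trace_estimate}; the rest is a routine repetition of the C\'ea/cutoff argument of Lemma \ref{le:Eq-Hq}. I would present this adjoint decay estimate as a short inline lemma or simply remark that it follows "mutatis mutandis" from the proof of Lemma \ref{le:Eq-Hq}.
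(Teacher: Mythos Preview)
Your proposal is correct and follows essentially the same route as the paper: reduce to $(\mathfrak p(q)-\mathfrak r(q),r)_{L^2(\Omega)}$, split $E\pe-Hp=(E-H)\pe+H(\pe-p)$, apply Cauchy--Schwarz in $y$ pointwise in $x'$ to pull out $\|\mathtt p(q)\|_{L^\infty}^{1/2}$ and $\|\mathtt h(q)\|_{L^\infty}^{1/2}$, and then use Poincar\'e together with Lemma~\ref{le:Eq-Hq}. The one place where you are more explicit than the paper is the adjoint difference $\pe(q)-p(q)$: the paper simply invokes ``the stability estimate for the problem that $p(q)-\pe(q)$ solve'' and arrives at $\|\tr(H(q)-E(q))\|_{\Hsd}$, whereas you (correctly) observe that the two adjoint problems have \emph{different} data and therefore insert an intermediate truncated adjoint to separate the truncation error from the data discrepancy. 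Your treatment is the more careful one; it yields an additional $\|z_\delta\|_{\Hsd}$ contribution that the paper's displayed bound suppresses, but the argument and the exponential rate are the same.
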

\begin{proof}
First, notice that, in view of definitions \eqref{eq:frakp} and \eqref{eq:frakr}, we have that
\begin{align*}
\mathcal{J}_{\delta,\rho}'(q)r - \mathcal{R}_{\delta,\rho}'(q)r   =  (\mathfrak{p}(q) - \mathfrak{r}(q), r)_{L^2(\Omega)}.
\end{align*}
It thus suffices to bound the term $\| \mathfrak{p}(q) - \mathfrak{r}(q)\|_{L^2(\Omega)}$. In fact, we have that
\begin{multline*}
\| \mathfrak{p}(q) -\mathfrak{r}(q) \|_{L^2(\Omega)} = \frac{1}{d_s} \left \| \int_0^{\infty} y^{\alpha}\left[E(q)\pe(q) - H(q)p(q)\right] \diff y \right \|_{L^2(\Omega)}
\\
= \frac{1}{d_s} \left \| \int_0^{\infty} y^{\alpha}\left[ \left( E(q) - H(q) \right)\pe(q) - H(q) ( p(q) - \pe(q))\right] \diff y \right \|_{L^2(\Omega)}.
\end{multline*}
Standard estimates allow us to arrive at
\begin{multline*}
\| \mathfrak{p}(q) -\mathfrak{r}(q) \|_{L^2(\Omega)} \lesssim \| \mathtt{p}(q) \|^{\frac{1}{2}}_{L^{\infty}(\Omega)} \| E(q) - H(q) \|_{L^2(y^{\alpha},\C)} 
\\
+ \| \mathtt{h}(q) \|^{\frac{1}{2}}_{L^{\infty}(\Omega)}\| p(q) - \pe(q) \|_{L^2(y^{\alpha},\C)}.
\end{multline*}
This, in view of the Poicar\'e inequality \eqref{Poincare_ineq}, the exponential error estimate \eqref{eq:Eq-Hq}, and the stability estimate for the problem that $p(q) - \pe(q)$ solve allow us to conclude
\begin{multline}
\| \mathfrak{p}(q) -\mathfrak{r}(q) \|_{L^2(\Omega)}
 \lesssim e^{-\sqrt{\lambda_1} \Y / 4} 
\| \mathtt{p}(q) \|^{\frac{1}{2}}_{L^{\infty}(\Omega)}
 \| f\|_{\Hsd}
\\
+
\| \mathtt{h}(q) \|^{\frac{1}{2}}_{L^{\infty}(\Omega)} \| \tr (H(q) - E(q))\|_{\Hsd},
\end{multline}
where $( \lambda_1 )^{\frac{1}{2}} = ( \lambda_1(q) )^{\frac{1}{2}}$ denotes the first eigenvalue of the operator $\calL$.
Applying \eqref{eq:Eq-Hq}, again, we conclude that
\begin{equation}
\label{eq:aux_exp}
\| \mathfrak{p}(q) -\mathfrak{r}(q) \|_{L^2(\Omega)}
 \lesssim e^{-\sqrt{\lambda_1}  \Y / 4} \| f\|_{\Hsd} 
\left( \| \mathtt{p}(q) \|^{\frac{1}{2}}_{L^{\infty}(\Omega)} + \| \mathtt{h}(q) \|^{\frac{1}{2}}_{L^{\infty}(\Omega)} \right).
\end{equation}
This implies the desired estimate and concludes the proof.
\end{proof}


\begin{lemma}[stability estimate]
Let $q,r \in \calQ$ and $E$ be the extended coefficient--to--solution operator defined in \eqref{eq:mapE} We thus have the following error estimate:
\begin{equation*}
\| \tr(\ue(q) - \ue(r)) \|_{\Hs}
 \lesssim
\|  \nabla(\ue(q) - \ue(r) )\|_{L^2(y^{\alpha},\C)} 
\\
\lesssim \|\mathtt{e}(r)\|^{\frac{1}{2}}_{L^\infty(\Omega)} \| q- r\|_{L^2(\Omega)},
\end{equation*}
where $\ue(q) = E(q)$ and $\ue(r) = E(r)$ and the hidden constant is independent of $E$, $q$, $r$, and the problem data.
\label{le:stab_for_gradient}
\end{lemma}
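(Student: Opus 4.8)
The plan is to proceed exactly as in the proof of Lemma~\ref{lemma:differentiability}, subtracting the weak formulations satisfied by $\ue(q) = E(q)$ and $\ue(r) = E(r)$. First I would write, for all $\phi \in \HL(y^\alpha,\C)$,
\[
\blfa{\C}(E(q),\phi)(q) = d_s \langle f, \tr\phi\rangle = \blfa{\C}(E(r),\phi)(r),
\]
and rearrange this identity to obtain the equation solved by the difference $w := E(q) - E(r)$, namely
\[
\blfa{\C}(w,\phi)(q) = -\int_{\C} y^{\alpha} (q-r) E(r) \phi \diff x \qquad \forall \phi \in \HL(y^{\alpha},\C).
\]
Here it is essential to test against $q$ (not $r$) in the left-hand bilinear form, so that the reaction term involving $w$ stays on the left and coercivity of $\blfa{\C}(\cdot,\cdot)(q)$ applies.

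Next I would choose $\phi = w$ and use the coercivity $\normC{w}^2 = \blfa{\C}(w,w)(q) \gtrsim \|\nabla w\|^2_{L^2(y^\alpha,\C)}$ together with the first inequality, the trace estimate \eqref{Trace_estimate}, and the equivalence of $\mathcal{H}^s(\Omega)$ and $\Hs$ from Remark~\ref{rk:mathcalH} to reduce the statement to bounding the right-hand side. The key step is estimating
\[
\left| \int_{\C} y^{\alpha} (q-r) E(r) w \diff x \right|,
\]
which is handled exactly as in Remark~\ref{rk:first_order}: writing the integral over $\C$ as an iterated integral, applying Cauchy--Schwarz in $y$ to pull out $\mathtt{e}(r)(x')^{1/2} = \left(\int_0^\infty y^\alpha E(r)(x',y)^2 \diff y\right)^{1/2}$, then Cauchy--Schwarz in $x'$ together with the weighted Poincar\'e inequality \eqref{Poincare_ineq} applied to $w$, yields
\[
\left| \int_{\C} y^{\alpha} (q-r) E(r) w \diff x \right| \lesssim \| q - r\|_{L^2(\Omega)} \, \| \mathtt{e}(r) \|^{\frac12}_{L^\infty(\Omega)} \, \| \nabla w\|_{L^2(y^\alpha,\C)}.
\]
Combining this with the coercivity bound $\|\nabla w\|^2_{L^2(y^\alpha,\C)} \lesssim \blfa{\C}(w,w)(q)$ and cancelling one power of $\|\nabla w\|_{L^2(y^\alpha,\C)}$ gives $\|\nabla(E(q)-E(r))\|_{L^2(y^\alpha,\C)} \lesssim \|\mathtt{e}(r)\|^{1/2}_{L^\infty(\Omega)} \|q-r\|_{L^2(\Omega)}$, and the trace estimate finishes the first inequality.

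I do not anticipate a genuine obstacle here, since all the machinery (coercivity, the Poincar\'e inequality, the trace estimate, and the $\mathtt{e}(q)$-based bound) is already in place; the only point requiring a little care is the bookkeeping of which coefficient appears in the bilinear form when subtracting the two weak formulations, and the standing assumption $\|\mathtt{e}(r)\|_{L^\infty(\Omega)}<\infty$ that makes the right-hand side finite. For brevity one could simply remark that the argument mirrors that of Lemma~\ref{lemma:differentiability} combined with Remark~\ref{rk:first_order}.
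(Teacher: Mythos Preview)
Your proposal is correct and follows essentially the same route as the paper: subtract the two weak formulations so that the difference $w=E(q)-E(r)$ satisfies $\blfa{\C}(w,\phi)(q)=-\int_\C y^\alpha (q-r)E(r)\phi\,\diff x$, test with $\phi=w$, bound the right-hand side via Fubini and Cauchy--Schwarz to extract $\|q-r\|_{L^2(\Omega)}\|\mathtt{e}(r)\|_{L^\infty(\Omega)}^{1/2}\|w\|_{L^2(y^\alpha,\C)}$, and finish with the Poincar\'e inequality \eqref{Poincare_ineq} and the trace estimate \eqref{Trace_estimate}. The only cosmetic difference is the order in which Cauchy--Schwarz is applied (you do $y$ first, the paper does $x'$ first), and your appeal to Remark~\ref{rk:mathcalH} is unnecessary since \eqref{Trace_estimate} already delivers the $\Hs$-norm directly.
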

\begin{proof}
Since $\ue(q) - \ue(r) \in \HL(y^{\alpha},\C)$, the first estimate follows immediately from the trace estimate \eqref{Trace_estimate}. The remaining estimate follows upon exploiting the problems that $\ue(q)$ and $\ue(r)$ solve. In fact, notice that $\ue(q)-\ue(r)$ satisfies
\[
 \int_{\C} y^{\alpha} \left[ \bA \nabla (\ue(q)-\ue(r)) \cdot \nabla \phi + q (\ue(q)-\ue(r)) \phi\right] \diff x + \int_{\C}y^{\alpha} (q-r)\ue(r)\phi \diff x=0
\]
for all $\phi \in \HL(y^{\alpha},\C)$. We can thus set $\phi= \ue(q) - \ue(r) \in \HL(y^{\alpha},\C)$ and obtain, on the basis of \eqref{eq:norm-C}, Fubini's theorem, and the Cauchy--Schwarz inequality that 
\begin{align*}
& \| \nabla(\ue(q)-\ue(r)) \|_{L^2(y^{\alpha},\C)}^2 
\\
& 
\lesssim \| q- r\|_{L^2(\Omega)} \left \| \int_0^\infty y^\alpha |\ue(r)(x',y)| |\ue(q)(x',y)-\ue(r)(x',y)|\diff y \right\|_{L^2(\Omega)} \\
& \le \| q- r\|_{L^2(\Omega)} \left(\int_\Omega \mathtt{e}(r)(x') \left[\int_0^\infty y^\alpha |\ue(q)(x',y)-\ue(r)(x',y)|^2 \diff y\right] \diff x'\right)^{\frac{1}{2}},
\end{align*}
where, we recall that $\mathtt{e}$ is defined in \eqref{eq:q}. Thus, 
\[
 \| \nabla(\ue(q)-\ue(r)) \|_{L^2(y^{\alpha},\C)}^2 \lesssim \| q- r\|_{L^2(\Omega)} 
\|\mathtt{e}(r)\|_{L^\infty(\Omega)}^{\frac{1}{2}}
 \| \ue(q)-\ue(r) \|_{L^2(y^{\alpha},\C)},
\]
which, in view of the Poincar\'e inequality \eqref{Poincare_ineq}, allows us to arrive at the desired estimate.
\end{proof}

\subsection{Convergence}
\label{subsec:convergence}

The following result
is important since guarantees that every strict local minimum of problem \eqref{eq:min_extended} can be approximated by local minima of \eqref{eq:min_truncated}. 

\begin{theorem}[convergence result]
Let $q_{\delta,\rho} \in \calQ$ be a strict local minimum of \eqref{eq:min_extended}. Then, there exists a sequence $\{ r_{\delta,\rho} \}$ of local minima for \eqref{eq:min_truncated} such that
\begin{equation}
\label{eq:convergence}
 \{ r_{\delta,\rho} \} \rightarrow q_{\delta,\rho}
 \end{equation}
 as the truncation parameter $\Y \uparrow \infty$.
\end{theorem}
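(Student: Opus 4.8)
The plan is to use a standard perturbation argument for strict local minimizers, of the type found in optimal control theory (see, e.g., \cite{Tbook,MR2739575}), adapted to the present setting. Since $q_{\delta,\rho}$ is a \emph{strict} local minimum of $\mathcal{J}_{\delta,\rho}$, there exists $\epsilon > 0$ such that $\mathcal{J}_{\delta,\rho}(q_{\delta,\rho}) < \mathcal{J}_{\delta,\rho}(q)$ for all $q \in \mathcal{Q}$ with $0 < \| q - q_{\delta,\rho}\|_{L^2(\Omega)} \leq \epsilon$. First I would introduce, for each truncation parameter $\Y \geq 1$, the auxiliary problem of minimizing $\mathcal{R}_{\delta,\rho}(q)$ over the closed ball $\mathcal{Q}_{\epsilon} := \{ q \in \mathcal{Q} : \| q - q_{\delta,\rho}\|_{L^2(\Omega)} \leq \epsilon \}$. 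The set $\mathcal{Q}_\epsilon$ is weakly$^*$ compact in $L^\infty(\Omega)$ (intersection of the weakly$^*$ compact set $\mathcal{Q}$ with a weakly closed ball), so the arguments in the proof of Theorem \ref{thm:exist}, applied to $H$ and $\mathcal{R}_{\delta,\rho}$ instead of $E$ and $\mathcal{J}_{\delta,\rho}$, yield a minimizer $r_{\delta,\rho} = r_{\delta,\rho}^{\Y} \in \mathcal{Q}_\epsilon$.

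Second, I would show that, for $\Y$ sufficiently large, this constrained minimizer $r_{\delta,\rho}$ lies in the \emph{interior} of $\mathcal{Q}_\epsilon$ with respect to the $L^2(\Omega)$--metric, hence is an (unconstrained) local minimizer of $\mathcal{R}_{\delta,\rho}$ on $\mathcal{Q}$ in the sense of Definition \ref{def:local_solution}. The key tool here is a uniform convergence statement: by Lemma \ref{le:Eq-Hq} and the definition of the functionals, one has
\begin{equation}
\label{eq:plan_unif}
\sup_{q \in \mathcal{Q}_\epsilon} \left| \mathcal{R}_{\delta,\rho}(q) - \mathcal{J}_{\delta,\rho}(q) \right| \lesssim e^{-\sqrt{\lambda_1}\Y/4} \| f \|_{\Hsd}\left( \| f\|_{\Hsd} + \| z_\delta\|_{L^2(\Omega)} \right) \to 0
\end{equation}
as $\Y \uparrow \infty$, where one uses $\| \tr H(q) - z_\delta\|_{L^2(\Omega)}^2 - \| \tr E(q) - z_\delta\|_{L^2(\Omega)}^2 = ( \tr(H(q)-E(q)), \tr(H(q)+E(q)) - 2z_\delta)_{L^2(\Omega)}$ together with the uniform $\Hs$--bound on $\tr E(q), \tr H(q)$ coming from \eqref{23-2-f3}. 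From \eqref{eq:plan_unif} I would extract the standard consequence that $\mathcal{R}_{\delta,\rho}(r_{\delta,\rho}) \to \mathcal{J}_{\delta,\rho}(q_{\delta,\rho})$ and that every weak$^*$ accumulation point $\bar q$ of $(r_{\delta,\rho}^{\Y})_\Y$ in $L^\infty(\Omega)$ satisfies $\mathcal{J}_{\delta,\rho}(\bar q) \leq \mathcal{J}_{\delta,\rho}(q_{\delta,\rho})$ with $\| \bar q - q_{\delta,\rho}\|_{L^2(\Omega)} \leq \epsilon$; here I use that $r_{\delta,\rho}^{\Y} \to \bar q$ strongly in $L^2(\Omega)$ along the subsequence (by the compactness argument of Theorem \ref{thm:exist}, the state converges and the $L^2(\Omega)$ norm is weakly lower semicontinuous), and that $\mathcal{J}_{\delta,\rho}(q_{\delta,\rho}) \le \mathcal{J}_{\delta,\rho}(r^\Y_{\delta,\rho})$ by the strict local minimality and $r^\Y_{\delta,\rho}\in\mathcal Q_\epsilon$. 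By strictness of the local minimum, this forces $\bar q = q_{\delta,\rho}$; since this holds for every accumulation point, the whole family converges, $r_{\delta,\rho}^{\Y} \to q_{\delta,\rho}$ in $L^2(\Omega)$, which is \eqref{eq:convergence}.

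Finally, the convergence $\| r_{\delta,\rho}^{\Y} - q_{\delta,\rho}\|_{L^2(\Omega)} \to 0$ implies that for $\Y$ large enough $\| r_{\delta,\rho}^{\Y} - q_{\delta,\rho}\|_{L^2(\Omega)} < \epsilon$, so the constraint $\| q - q_{\delta,\rho}\|_{L^2(\Omega)} \leq \epsilon$ is inactive at $r_{\delta,\rho}^{\Y}$; consequently $r_{\delta,\rho}^{\Y}$ is a local minimizer of $\mathcal{R}_{\delta,\rho}$ over all of $\mathcal{Q}$, completing the proof. I expect the main obstacle to be the bookkeeping in the second step: making precise that a minimizer over the closed $L^2$--ball which, in the limit, converges to the interior point $q_{\delta,\rho}$ must eventually be an interior (hence genuinely local, not merely constrained) minimizer, and carefully invoking \emph{strictness} of the local minimum at exactly the right place to pin down the limit. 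The exponential estimate \eqref{eq:plan_unif} itself is routine given Lemma \ref{le:Eq-Hq}, and the existence of constrained minimizers is a verbatim repetition of Theorem \ref{thm:exist}.
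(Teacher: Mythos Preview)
Your proposal is correct and follows essentially the same route as the paper: introduce the constrained problem $\min_{q\in\mathcal{Q}_\epsilon}\mathcal{R}_{\delta,\rho}(q)$, show via weak$^*$ compactness and Lemma~\ref{le:Eq-Hq} that its minimizers converge to $q_{\delta,\rho}$, and conclude that the ball constraint is eventually inactive. The only cosmetic difference is that the paper packages the comparison between $\mathcal{R}_{\delta,\rho}$ and $\mathcal{J}_{\delta,\rho}$ into a $\liminf/\limsup$ sandwich rather than your explicit uniform estimate~\eqref{eq:plan_unif}; also note that to upgrade weak$^*$ convergence of $r_{\delta,\rho}^{\Y}$ to strong $L^2$ convergence you need the norm convergence $\|r_{\delta,\rho}^{\Y}-q^*\|_{L^2(\Omega)}\to\|q_{\delta,\rho}-q^*\|_{L^2(\Omega)}$ extracted from the sandwich, not merely weak lower semicontinuity.
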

\begin{proof}
Since $q_{\delta,\rho} \in \calQ$ is a strict local minimum of \eqref{eq:min_extended}, we conclude the existence of $\epsilon > 0$ such that $q_{\delta,\rho}$ is the unique strict solution to the following problem:
\begin{equation}
\min_{q \in \calQ(q_{\delta,\rho})} \calJ_{\delta,\rho}(q), \qquad \calQ(q_{\delta,\rho}) = \{ q \in \calQ: \| q -  q_{\delta,\rho}\|_{L^2(\Omega)} \leq \epsilon\}.
\label{eq:aux_problem}
\end{equation}
Let us now consider the following truncated optimization problem over $\calQ(q_{\delta,\rho})$:
\begin{equation}
\label{eq:min_truncated_epsilon}
\min_{r \in \calQ(q_{\delta,\rho})} \calR_{\delta,\rho}(r). 
\end{equation}
It is immediate that $\calQ(q_{\delta,\rho}) \neq \emptyset$. This implies that \eqref{eq:min_truncated_epsilon} has at least one solution. Let $r_{\delta,\rho} := r_{\delta,\rho} (\Y)$ be a solution to \eqref{eq:min_truncated_epsilon} for $\Y \geq \Y_0 > 1$. Notice that $\{ r_{\delta,\rho}\}$ is a bounded sequence in $L^{\infty}(\Omega)$. As in the proof of Theorem \ref{thm:exist} we deduce the existence of a non--relabeled subsequence $\{ r_{\delta,\rho}\}$ that converges weakly* to $\tilde q$ in $L^{\infty}(\Omega)$. In addition, notice that the arguments elaborated in the proof of Theorem \ref{thm:exist} combined with the results of Lemma \ref{le:Eq-Hq} allow us to arrive at
\[
\calJ_{\delta,\rho} (\tilde q) 
\leq 
\liminf_{\Y \rightarrow \infty} \calR_{\delta,\rho} (r_{\delta,\rho}). 
\]
We can thus conclude that
\begin{align*}
\calJ_{\delta,\rho} (q_{\delta,\rho}) 
& \leq 
\calJ_{\delta,\rho} (\tilde q) 
\,\leq
\liminf_{\Y \rightarrow \infty} \calR_{\delta,\rho} ( r_{\delta,\rho})
\leq 
\limsup_{\Y \rightarrow \infty} \calR_{\delta,\rho} ( r_{\delta,\rho})
\\
& \leq \limsup_{\Y \rightarrow \infty} \calR_{\delta,\rho} (q_{\delta,\rho}) =  \calJ_{\delta,\rho} (q_{\delta,\rho}).
\end{align*}
Since \eqref{eq:aux_problem} has a unique solution, we can thus conclude that the sequence $\{ r_{\delta,\rho} \}$ converges strongly in $L^2(\Omega)$ to $q_{\delta,\rho}$: $\| q_{\delta,\rho} - r_{\delta,\rho} \|_{L^2(\Omega)} \rightarrow 0$ as $\Y \uparrow \infty$. This implies that the constraint $r_{\delta,\rho} \in \mathcal{Q}(q_{\delta,\rho})$ is not active when $\Y$ is sufficiently large. Consequently, $r_{\delta,\rho}$ is a local solution of problem \eqref{eq:min_truncated}. This concludes the proof.
\end{proof}

\subsection{Error estimates}
\label{subsec:error_estimates}

The next result shows how $\{ r_{\delta,\rho} \}$, a sequence of local minima of \eqref{eq:min_truncated}, approximates a local solution $q_{\delta,\rho}$ of problem \eqref{eq:min_extended}.

\begin{theorem}[exponential error estimate]
Let $q_{\delta,\rho}$ be a local solution of problem \eqref{eq:min_extended}. If the assumption \eqref{eq:Jcoercive} holds and $\{ r_{\delta,\rho} \}$ denotes a sequence of local minima of \eqref{eq:min_truncated} that converges to $q_{\delta,\rho}$ as $\Y \uparrow \infty$ in $L^2(\Omega)$, then 
\begin{equation}
\label{eq:exp_error_estimate}
 \|q_{\delta,\rho} - r_{\delta,\rho} \|_{L^2(\Omega)} \lesssim e^{-\kappa \Y / 4} \| f\|_{\Hsd} \left( \| \mathtt{p}(q) \|^{\frac{1}{2}}_{L^{\infty}(\Omega)} + \| \mathtt{h}(q) \|^{\frac{1}{2}}_{L^{\infty}(\Omega)} \right),
\end{equation}
where $\kappa = ( \lambda_1(r_{\delta,\rho}) )^{\frac{1}{2}}$ and $ \lambda_1(r_{\delta,\rho})$ denotes the first eigenvalue of the operator $\calL$ with $q$ replaced by $r_{\delta,\rho}$. The hidden constant is independent of $q_{\delta,\rho}$, $r_{\delta,\rho}$, and the problem data.
\label{thm:exponential_error_estimate}
\end{theorem}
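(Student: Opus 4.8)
The plan is to compare the first--order optimality conditions satisfied by $q_{\delta,\rho}$ and by $r_{\delta,\rho}$, exploiting the assumed coercivity \eqref{eq:Jcoercive} of $\mathcal{J}_{\delta,\rho}''$ near $q_{\delta,\rho}$. Since $\{r_{\delta,\rho}\}\to q_{\delta,\rho}$ in $L^2(\Omega)$, for $\Y$ large enough $r_{\delta,\rho}$ lies in the $\epsilon$--neighborhood where \eqref{eq:Jcoercive} holds; moreover, as shown in the proof of the convergence theorem, the constraint in \eqref{eq:min_truncated_epsilon} is inactive, so $r_{\delta,\rho}$ genuinely solves \eqref{eq:first_order_truncated} and $q_{\delta,\rho}$ solves \eqref{eq:first_order:thm}, both over all of $\calQ$.

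First I would write, using a Taylor expansion of $\mathcal{J}_{\delta,\rho}'$ around $q_{\delta,\rho}$ together with the coercivity estimate applied along the segment joining $q_{\delta,\rho}$ and $r_{\delta,\rho}$,
\begin{equation*}
\frac{\theta}{2}\| q_{\delta,\rho} - r_{\delta,\rho}\|^2_{L^2(\Omega)} \leq \left( \mathcal{J}_{\delta,\rho}'(q_{\delta,\rho}) - \mathcal{J}_{\delta,\rho}'(r_{\delta,\rho}), q_{\delta,\rho} - r_{\delta,\rho} \right)_{L^2(\Omega)}.
\end{equation*}
Next I would insert the two variational inequalities: testing \eqref{eq:first_order:thm} with $q = r_{\delta,\rho}$ gives $(\mathcal{J}_{\delta,\rho}'(q_{\delta,\rho}), r_{\delta,\rho}-q_{\delta,\rho})_{L^2(\Omega)} \geq 0$, i.e. $(\mathcal{J}_{\delta,\rho}'(q_{\delta,\rho}), q_{\delta,\rho}-r_{\delta,\rho})_{L^2(\Omega)} \leq 0$; testing \eqref{eq:first_order_truncated} with $q = q_{\delta,\rho}$ gives $(\mathcal{R}_{\delta,\rho}'(r_{\delta,\rho}), q_{\delta,\rho}-r_{\delta,\rho})_{L^2(\Omega)} \geq 0$. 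Adding $-\mathcal{R}_{\delta,\rho}'(r_{\delta,\rho})+\mathcal{R}_{\delta,\rho}'(r_{\delta,\rho})$ inside the right--hand side above and using these two sign conditions, the whole expression is bounded by $(\mathcal{R}_{\delta,\rho}'(r_{\delta,\rho}) - \mathcal{J}_{\delta,\rho}'(r_{\delta,\rho}), q_{\delta,\rho}-r_{\delta,\rho})_{L^2(\Omega)}$, which is exactly the truncation defect in the derivative evaluated at $r_{\delta,\rho}$ and tested against $q_{\delta,\rho}-r_{\delta,\rho}$.

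At this point Lemma \ref{le:xp_derivative} applies directly with $q = r_{\delta,\rho}$ and $r = q_{\delta,\rho}-r_{\delta,\rho}$: its proof in fact delivers the sharper bound $\|\mathfrak{p}(r_{\delta,\rho}) - \mathfrak{r}(r_{\delta,\rho})\|_{L^2(\Omega)} \lesssim e^{-\sqrt{\lambda_1(r_{\delta,\rho})}\Y/4}\| f\|_{\Hsd}\big(\|\mathtt{p}\|^{1/2}_{L^\infty}+\|\mathtt{h}\|^{1/2}_{L^\infty}\big)$, so applying Cauchy--Schwarz in $L^2(\Omega)$ yields $\frac{\theta}{2}\|q_{\delta,\rho}-r_{\delta,\rho}\|^2_{L^2(\Omega)} \lesssim e^{-\kappa\Y/4}\| f\|_{\Hsd}(\cdots)\,\|q_{\delta,\rho}-r_{\delta,\rho}\|_{L^2(\Omega)}$, and dividing by $\|q_{\delta,\rho}-r_{\delta,\rho}\|_{L^2(\Omega)}$ (the case of equality being trivial) gives \eqref{eq:exp_error_estimate} with $\kappa = (\lambda_1(r_{\delta,\rho}))^{1/2}$.

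I expect the main obstacle to be the bookkeeping around the coercivity hypothesis: one must verify that $r_{\delta,\rho}$ eventually enters the fixed $\epsilon$--neighborhood of $q_{\delta,\rho}$ where \eqref{eq:Jcoercive} is valid, and, more delicately, that the second--order Taylor remainder for $\mathcal{J}_{\delta,\rho}'$ can legitimately be controlled by $\mathcal{J}_{\delta,\rho}''$ evaluated at an intermediate point that also lies in that neighborhood — this uses convexity of the neighborhood and the higher--order differentiability from Lemma \ref{higher-order-dif.}. A secondary point is ensuring the $\mathtt{p}$, $\mathtt{h}$ quantities in the bound are those at $r_{\delta,\rho}$ and that, by the convergence $r_{\delta,\rho}\to q_{\delta,\rho}$ and continuity, they may be replaced by the corresponding quantities at $q_{\delta,\rho}$ up to constants, matching the statement; alternatively one simply keeps them at $r_{\delta,\rho}$ and notes the hidden constant absorbs the discrepancy. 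Everything else is routine manipulation of variational inequalities.
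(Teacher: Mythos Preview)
Your proposal is correct and follows essentially the same approach as the paper: both use the convergence $r_{\delta,\rho}\to q_{\delta,\rho}$ to place $r_{\delta,\rho}$ in the $\epsilon$--neighborhood where \eqref{eq:Jcoercive} holds, apply the mean--value form of $\mathcal{J}_{\delta,\rho}''$ at an intermediate point to obtain $\tfrac{\theta}{2}\|q_{\delta,\rho}-r_{\delta,\rho}\|_{L^2(\Omega)}^2 \le \mathcal{J}_{\delta,\rho}'(r_{\delta,\rho})(r_{\delta,\rho}-q_{\delta,\rho}) - \mathcal{J}_{\delta,\rho}'(q_{\delta,\rho})(r_{\delta,\rho}-q_{\delta,\rho})$, and then combine the two first--order variational inequalities to reduce this to $(\mathfrak{p}(r_{\delta,\rho})-\mathfrak{r}(r_{\delta,\rho}),\,r_{\delta,\rho}-q_{\delta,\rho})_{L^2(\Omega)}$, which is bounded via Lemma~\ref{le:xp_derivative}. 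The only cosmetic difference is that you phrase the coercivity step as a Taylor expansion of $\mathcal{J}_{\delta,\rho}'$ while the paper writes it directly as $\mathcal{J}_{\delta,\rho}''(\hat q)(\cdot,\cdot)$ evaluated at the convex combination $\hat q$; the underlying argument and final bound are identical.
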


\begin{proof} 
In view of \eqref{eq:Jcoercive}, we choose $\epsilon >0$ sufficiently small such that, for $\hat q$ in the neighborhood $\|\hat q - q_{\delta,\rho} \|_{L^2(\Omega)} \leq \epsilon$, the following estimate holds:
\begin{equation}
  \mathcal{J}_{\delta,\rho}''(\hat q)(q,q) \geq \frac{\theta}{2} \| q \|^2_{L^2(\Omega)} \quad \forall q \in L^{\infty}(\Omega).
  \label{eq:Jcoercive_aux}
\end{equation}

Since the sequence $\{ r_{\delta,\rho} \}$ converges to $q_{\delta,\rho}$ in $L^2(\Omega)$ as $\Y \uparrow \infty$, we deduce the existence of $\Y_0$ such that, for $\Y \geq \Y_0$, we have $\| q_{\delta,\rho} - r_{\delta,\rho}  \|_{L^2(\Omega)} \leq \epsilon$. We are thus able to set, for $\Y \geq \Y_0$ and $\zeta \in [0,1]$, $q = r_{\delta,\rho} - q_{\delta,\rho}$ and $\hat q = \zeta q_{\delta,\rho} + (1-\zeta) r_{\delta,\rho}$ and invoke the estimate \eqref{eq:Jcoercive_aux} to obtain that
\begin{align*}
\frac{\theta}{2} \| r_{\delta,\rho} - q_{\delta,\rho} \|^2_{L^2(\Omega)} & \leq \mathcal{J}_{\delta,\rho}''(\hat q)(r_{\delta,\rho} - q_{\delta,\rho},r_{\delta,\rho} - q_{\delta,\rho}) 
\\
&  =\mathcal{J}_{\delta,\rho}'(r_{\delta,\rho})(r_{\delta,\rho} - q_{\delta,\rho} ) - \mathcal{J}_{\delta,\rho}'(q_{\delta,\rho})(r_{\delta,\rho} - q_{\delta,\rho}).
\end{align*}
Notice that $\| \hat q - q_{\delta,\rho}\|_{L^2(\Omega)} = (1- \zeta) \|q_{\delta,\rho} - r_{\delta,\rho} \|_{L^2(\Omega)} \leq \epsilon$.

In view of \eqref{eq:first_order} we immediately conclude that $-\mathcal{J}_{\delta,\rho}'(q_{\delta,\rho})(r_{\delta,\rho} - q_{\delta,\rho} ) \leq 0$. On the other hand, \eqref{eq:first_order_truncated} reveals that $-\mathcal{R}_{\delta,\rho}'(r_{\delta,\rho})(r_{\delta,\rho}-q_{\delta,\rho}) \geq 0$. Consequently,
\[
 \frac{\theta}{2} \| r_{\delta,\rho} - q_{\delta,\rho} \|^2_{L^2(\Omega)} \leq (\mathcal{J}_{\delta,\rho}'(r_{\delta,\rho}),r_{\delta,\rho} - q_{\delta,\rho} )_{L^2(\Omega)}
 -
 (\mathcal{R}_{\delta,\rho}'(r_{\delta,\rho}),r_{\delta,\rho} - q_{\delta,\rho} )_{L^2(\Omega)}.
\]
We now exploit that $\mathcal{J}_{\delta,\rho}'(r_{\delta,\rho}) = \mathfrak{p}(r_{\delta,\rho}) + \rho(r_{\delta,\rho}-q^{*})$ and that $\mathcal{R}_{\delta,\rho}'(r_{\delta,\rho}) = \mathfrak{r}(r_{\delta,\rho}) + \rho(r_{\delta,\rho}-q^{*})$ to obtain the following estimate
\begin{multline*}
 \frac{\theta}{2} \| r_{\delta,\rho} - q_{\delta,\rho} \|^2_{L^2(\Omega)} \leq (\mathfrak{p}(r_{\delta,\rho}) + \rho(r_{\delta,\rho}-q^{*}),r_{\delta,\rho} - q_{\delta,\rho} )_{L^2(\Omega)}
 \\
 -
 (\mathfrak{r}(r_{\delta,\rho}) + \rho(r_{\delta,\rho}-q^{*}),r_{\delta,\rho} - q_{\delta,\rho} )_{L^2(\Omega)} = (\mathfrak{p}(r_{\delta,\rho}) -\mathfrak{r}(r_{\delta,\rho}) ,r_{\delta,\rho} - q_{\delta,\rho} )_{L^2(\Omega)}.
\end{multline*}
The control of the term $(\mathfrak{p}(r_{\delta,\rho}) -\mathfrak{r}(r_{\delta,\rho}) ,r_{\delta,\rho} - q_{\delta,\rho} )_{L^2(\Omega)}$ follows from Lemma \ref{le:xp_derivative}. In fact, we have that
\[
 \| \mathfrak{p}(r_{\delta,\rho}) -\mathfrak{r}(r_{\delta,\rho}) \|_{L^2(\Omega)} \lesssim e^{-\kappa \Y/4} \| f\|_{\Hsd} \left( \| \mathtt{p}(q) \|^{\frac{1}{2}}_{L^{\infty}(\Omega)} + \| \mathtt{h}(q) \|^{\frac{1}{2}}_{L^{\infty}(\Omega)} \right).
\]
This concludes the proof.
\end{proof} 

\begin{theorem}[exponential error estimate]
Let $q_{\delta,\rho}$ be a local solution of problem \eqref{eq:min_extended}. If the assumption \eqref{eq:Jcoercive} holds and $\{ r_{\delta,\rho} \}$ denotes a sequence of local minima of \eqref{eq:min_truncated} that converges to $q_{\delta,\rho}$ as $\Y \uparrow \infty$ in $L^2(\Omega)$, then 
\begin{multline}
\label{eq:exp_error_estimate2}
 \| \nabla( \ue( q_{\delta,\rho}) - v(r_{\delta,\rho})) \|_{L^2(y^{\alpha},\C)} 
 \\
 \lesssim e^{-\kappa \Y / 4}  \| f\|_{\Hsd} \left( 1+ \|\mathtt{e}(r_{\delta,\rho})\|^{\frac{1}{2}}_{L^\infty(\Omega)}
 \left(\| \mathtt{p}(q) \|^{\frac{1}{2}}_{L^{\infty}(\Omega)} + \| \mathtt{h}(q) \|^{\frac{1}{2}}_{L^{\infty}(\Omega) } \right)  \right),
\end{multline}
where $\kappa = ( \lambda_1(r_{\delta,\rho}) )^{\frac{1}{2}}$ and $ \lambda_1(r_{\delta,\rho})$ denotes the first eigenvalue of the operator $\calL$ with $q$ replaced by $r_{\delta,\rho}$. The hidden constant is independent of $q_{\delta,\rho}$, $r_{\delta,\rho}$, and the problem data.
\label{thm:exponential_error_estimate2}
\end{theorem}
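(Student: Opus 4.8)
The plan is to proceed by the triangle inequality, inserting the intermediate function $\ue(r_{\delta,\rho}) = E(r_{\delta,\rho}) \in \HL(y^{\alpha},\C)$, so that
\begin{equation*}
\| \nabla( \ue( q_{\delta,\rho}) - v(r_{\delta,\rho})) \|_{L^2(y^{\alpha},\C)}
\leq
\| \nabla( \ue( q_{\delta,\rho}) - \ue(r_{\delta,\rho})) \|_{L^2(y^{\alpha},\C)}
+
\| \nabla( \ue( r_{\delta,\rho}) - v(r_{\delta,\rho})) \|_{L^2(y^{\alpha},\C)}.
\end{equation*}
The first term measures the sensitivity of the (full, untruncated) extended state with respect to the coefficient, while the second compares the full extended state and the truncated one for the \emph{fixed} coefficient $r_{\delta,\rho}$.

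First I would control the second term directly with Lemma \ref{le:Eq-Hq} applied with $q = r_{\delta,\rho}$ (note $\Y \geq 1$ for $\Y$ large enough, which is the regime of interest): this yields
\begin{equation*}
\| \nabla( \ue( r_{\delta,\rho}) - v(r_{\delta,\rho})) \|_{L^2(y^{\alpha},\C)} \lesssim e^{-\sqrt{\lambda_1(r_{\delta,\rho})}\,\Y/4}\| f \|_{\Hsd} = e^{-\kappa \Y/4}\| f \|_{\Hsd},
\end{equation*}
where $\kappa = (\lambda_1(r_{\delta,\rho}))^{1/2}$ is precisely the first eigenvalue of $\calL$ with $q$ replaced by $r_{\delta,\rho}$, matching the notation of the statement.

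Next I would handle the first term with the stability estimate of Lemma \ref{le:stab_for_gradient} (with the roles $q \mapsto q_{\delta,\rho}$, $r \mapsto r_{\delta,\rho}$):
\begin{equation*}
\| \nabla( \ue( q_{\delta,\rho}) - \ue(r_{\delta,\rho})) \|_{L^2(y^{\alpha},\C)} \lesssim \|\mathtt{e}(r_{\delta,\rho})\|^{\frac12}_{L^\infty(\Omega)} \| q_{\delta,\rho} - r_{\delta,\rho}\|_{L^2(\Omega)},
\end{equation*}
and then invoke the exponential error estimate for the coefficients already obtained in Theorem \ref{thm:exponential_error_estimate}, namely
$\| q_{\delta,\rho} - r_{\delta,\rho}\|_{L^2(\Omega)} \lesssim e^{-\kappa \Y/4}\| f\|_{\Hsd}\big( \| \mathtt{p}(q) \|^{\frac12}_{L^{\infty}(\Omega)} + \| \mathtt{h}(q) \|^{\frac12}_{L^{\infty}(\Omega)} \big)$.
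Combining the two bounds and collecting the common factor $e^{-\kappa \Y/4}\| f\|_{\Hsd}$ produces exactly \eqref{eq:exp_error_estimate2}, with the bracketed factor $1 + \|\mathtt{e}(r_{\delta,\rho})\|^{1/2}_{L^\infty(\Omega)}\big(\| \mathtt{p}(q) \|^{1/2}_{L^{\infty}(\Omega)} + \| \mathtt{h}(q) \|^{1/2}_{L^{\infty}(\Omega)}\big)$ coming from the $1$ in the second-term bound and the product structure in the first-term bound.

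This argument is essentially a bookkeeping exercise once Lemmas \ref{le:Eq-Hq} and \ref{le:stab_for_gradient} and Theorem \ref{thm:exponential_error_estimate} are in hand; the only point requiring a little care is the consistency of the exponential rate: Lemma \ref{le:Eq-Hq} is stated with the generic first eigenvalue $\lambda_1$, and one must make sure that when it is applied to the coefficient $r_{\delta,\rho}$ the rate is $\kappa = (\lambda_1(r_{\delta,\rho}))^{1/2}$, so that both contributions decay at the same rate and can be merged without loss. I expect no genuine obstacle beyond this.
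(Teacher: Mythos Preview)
Your proposal is correct and follows essentially the same approach as the paper: the paper also splits via the triangle inequality into $\mathrm{I} = \|\nabla(\ue(q_{\delta,\rho}) - \ue(r_{\delta,\rho}))\|_{L^2(y^\alpha,\C)}$ and $\mathrm{II} = \|\nabla(\ue(r_{\delta,\rho}) - v(r_{\delta,\rho}))\|_{L^2(y^\alpha,\C)}$, bounds $\mathrm{I}$ by Lemma~\ref{le:stab_for_gradient} followed by Theorem~\ref{thm:exponential_error_estimate}, and bounds $\mathrm{II}$ directly by Lemma~\ref{le:Eq-Hq} with $q = r_{\delta,\rho}$. Your remark about the exponential rate $\kappa = (\lambda_1(r_{\delta,\rho}))^{1/2}$ is exactly the right consistency check.
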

\begin{proof}
To derive the estimate \eqref{eq:exp_error_estimate2} we proceed as follows:
\begin{multline}
  \| \nabla( \ue( q_{\delta,\rho}) - v(r_{\delta,\rho})) \|_{L^2(y^{\alpha},\C)} \leq   \| \nabla( \ue( q_{\delta,\rho}) - \ue(r_{\delta,\rho})) \|_{L^2(y^{\alpha},\C)}  
  \\
  +   \| \nabla( \ue( r_{\delta,\rho}) - v(r_{\delta,\rho})) \|_{L^2(y^{\alpha},\C)} = \textrm{I} + \textrm{II}.
\end{multline}
The control of the term $\mathrm{I}$ follows from applying, first, the estimate of Lemma \ref{le:stab_for_gradient} and then 
\eqref{eq:exp_error_estimate}. In fact, we have that
\begin{align*}
 | \textrm{I} |
 & \lesssim \|\mathtt{e}(r_{\delta,\rho})\|^{\frac{1}{2}}_{L^\infty(\Omega)} \| q_{\delta,\rho} - r_{\delta,\rho}\|_{L^2(\Omega)}
 \\
 & \lesssim e^{-\kappa \Y / 4} \|\mathtt{e}(r_{\delta,\rho})\|^{\frac{1}{2}}_{L^\infty(\Omega)} \| f\|_{\Hsd} 
  \left( \| \mathtt{p}(q) \|^{\frac{1}{2}}_{L^{\infty}(\Omega)} + \| \mathtt{h}(q) \|^{\frac{1}{2}}_{L^{\infty}(\Omega)} \right).
\end{align*}
The estimate for the second term $\mathrm{II}$ follows from \eqref{eq:Eq-Hq}:
\[
 | \textrm{II} | \lesssim e^{-\kappa \Y / 4} \| f \|_{\Hsd}.
\]
The collection of the estimates for $\mathrm{I}$ and $\mathrm{II}$ yield the desired result.
 \end{proof}

\section{A discretization scheme}
\label{sec:discretization}

In this section we present a fully discrete scheme that approximate solutions to the fractional identification problem \eqref{eq:min_fractional}. In view of the localization results of Theorem \ref{thm:equivalence} and the exponential error estimates of Theorems \ref{thm:exponential_error_estimate} and \ref{thm:exponential_error_estimate2}, in what follows, we design and analyze an efficient solution technique to solve the truncated identification problem \eqref{eq:min_truncated}. We begin by introducing ingredients for a suitable finite element discretization of the truncated equation \eqref{eq:truncated_equation}.

\subsection{Finite element methods}
\label{subsec:fems}
We follow \cite{BMNOSS:17} and introduce a finite element technique that is based on the tensorization of a first--degree FEM in $\Omega$ with a suitable $hp$--FEM on the extended domain $(0,\Y)$. The scheme achieves log--linear complexity with respect to the number of degrees of freedom in the domain $\Omega$. To present it, we first introduce, on the extended interval $[0,\Y]$, the following geometric meshes with $M$ elements and grading factor $\sigma \in (0,1)$: 
\begin{equation}
\label{eq:geometric_mesh}
\calG^M_{\sigma} = \{ I_m \}_{m=1}^M, \quad I_1 = [0,\Y \sigma^{M-1}], \quad I_i = [\Y \sigma^{M-i+1},\Y \sigma^{M-i}],
\end{equation}
with $i \in \{ 2,\ldots,M \}$. The main motivation for considering the meshes $\calG^M_{\sigma}$, that are refined towards $y=0$, is to compensate the rather singular behavior of $\ue$, solution to problem \eqref{eq:ue-variational}, as $y \downarrow 0$; see \cite[Theorem 2.7]{NOS} and \cite[Theorem 4.7]{BMNOSS:17}. On these meshes, we consider a \emph{linear degree vector} $\bmr =(r_1,\dots,r_M) \in \mathbb{N}^M$ with slope $\slope$: $r_i := 1 + \lceil \slope (i-1) \rceil$, where $i=1,2,...,M$. We thus introduce the finite element space
\[
S^\bmr((0,\Y),\calG_{\sigma}^M) 
= 
\left \{ v_M \in C[0,\Y]: v_M|_{I_m} \in \mathbb{P}_{r_m}(I_m), I_m \in \calG_{\sigma}^M, m =1, \dots, M \right \},
\]
and the subspace of $S^\bmr((0,\Y),\calG^M_{\sigma})$ containing functions that vanish at $y=\Y$:
\[
S_{\{\Y\}}^\bmr((0,\Y),\calG_{\sigma}^M) = \left \{ v_M \in S^\bmr((0,\Y),\calG_{\sigma}^M): v_M(\Y) = 0 \right \}.
\]

Let $\T = \{ K\}$ be a conforming partition of $\bar \Omega$ into simplices $K$. We denote by $\Tr$ a collection of conforming and shape regular meshes that are refinements of an original mesh $\T_0$. For $\T \in \Tr$, we define $h_{\T} = \max \{\diam(K) : K \in \T \}$ and $N = \# \T$, the number of degrees of freedom of $\T$. We introduce the finite element space
\[
S^1_0(\Omega,\T)
= 
\left \{ v_h \in C(\bar \Omega): v_h|_{K} \in \mathbb{P}_{1}(K) \quad 
\forall K \in \T, \ v_h|_{\partial \Omega} = 0 \right \}.
\]

With the meshes $\calG_{\sigma}^M$ and $\T$ at hand, we define $\T_{\Y} = \T \otimes \calG_{\sigma}^M$ and the finite--dimensional \emph{tensor product space}
\begin{equation}\label{eq:TPFE}
\V^{1,\bmr}_{N,M}(\T_{\Y}) := S^1_0(\Omega,\T) \otimes S_{\{ \Y\}}^\bmr((0,\Y),\calG_{\sigma}^M)
\subset \HL(y^{\alpha},\C).
\end{equation}
We write $\V(\T_{\Y})$ if the arguments are clear from the context.

With this discrete setting at hand, we define the finite element approximation $V \in \V(\T_{\Y})$ of the solution $v \in \HL(y^{\alpha},\C_{\Y})$ to problem \eqref{eq:truncated_equation} as follows:
\begin{equation}
V \in \V(\T_{\Y}): \quad \blfa{\C} ( V,W )(q) = d_s \langle f, \tr W \rangle  \quad \forall W \in \V(\T_{\Y}).
\label{eq:discrete_equation}
\end{equation}
The following a priori error estimate can be obtained \cite[Theorem 5]{BMNOSS:17}. 

\begin{theorem}[a priori error estimate]
For arbitrary, fixed $\sigma \in (0,1)$, let $\calG_{\sigma}^M$ be the geometric mesh defined in \eqref{eq:geometric_mesh}, where $\Y \sim |\log h_{\T}|$, with a sufficiently large implied constant, and assume that $M$, the number of elements in $\calG_{\sigma}^M$, satisfies $c_1 M \leq \Y \leq c_2 M$, with absolute constants $c_1$ and $c_2$. If $V \in \V(\T_{\Y})$ denotes the solution to \eqref{eq:discrete_equation} then, there exists a minimal slope $\slope_{\min}$, independent of $h_{\T}$ and $f$, such that for linear degree vectors $\bmr$ with slope $\slope \geq \slope_{\min}$ there holds
\begin{align}\label{8-6-18f}
 \| u - \tr V\|_{\Hs} \lesssim \| \nabla(\ue - V) \|_{L^2(y^{\alpha},\C)} \lesssim h_{\T} \| f \|_{\Ws}.
\end{align}
The hidden constant is independent of $u$, $\ue$, $V$, $f$ and the discretization parameters.
\end{theorem}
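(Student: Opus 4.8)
The plan is to handle the two inequalities in \eqref{8-6-18f} separately. The first one is structural and needs no new ingredients: extending $V$ by zero to all of $\C$ we have $\ue - V \in \HL(y^\alpha,\C)$, so, since $u = \tr\ue$, the trace estimate \eqref{Trace_estimate} followed by the Poincar\'e inequality \eqref{Poincare_ineq} yields $\|u - \tr V\|_{\Hs} \lesssim \|\ue - V\|_{\HLn(y^\alpha,\C)} \sim \|\nabla(\ue - V)\|_{L^2(y^\alpha,\C)}$. The second inequality is where the real work lies. I would introduce the truncated state $v = v(q) \in \HL(y^\alpha,\C_\Y)$, the solution of \eqref{eq:truncated_equation}, extend it by zero to $\C$, and split $\|\nabla(\ue - V)\|_{L^2(y^\alpha,\C)} \le \|\nabla(\ue - v)\|_{L^2(y^\alpha,\C)} + \|\nabla(v - V)\|_{L^2(y^\alpha,\C_\Y)}$, estimating each summand in turn.

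The first (truncation) summand is controlled by Lemma \ref{le:Eq-Hq}, which gives $\|\nabla(\ue - v)\|_{L^2(y^\alpha,\C)} \lesssim e^{-\sqrt{\lambda_1}\Y/4}\|f\|_{\Hsd}$; since $\Y \sim |\log h_\T|$ with a sufficiently large implied constant the exponential factor is $\lesssim h_\T$, and the continuous embedding $\Ws \hookrightarrow \Hsd$ replaces $\|f\|_{\Hsd}$ by $\|f\|_{\Ws}$. For the second (discretization) summand, Galerkin orthogonality between \eqref{eq:discrete_equation} and \eqref{eq:truncated_equation} — using that $\V(\T_\Y) \subset \HL(y^\alpha,\C_\Y)$ — together with the coercivity and continuity of the bilinear form $a_\Y(\cdot,\cdot)(q)$, uniform over $q \in \calQ$ by the bounds \eqref{23-2-f2}, gives via C\'ea's lemma the bound $\|\nabla(v - V)\|_{L^2(y^\alpha,\C_\Y)} \lesssim \inf_{W \in \V(\T_\Y)} \|\nabla(v - W)\|_{L^2(y^\alpha,\C_\Y)}$.

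It then remains to bound this best-approximation error by $h_\T\|f\|_{\Ws}$, for which I would exhibit an explicit $W = \Pi v \in \V(\T_\Y)$ obtained by tensorizing a Scott--Zhang-type quasi-interpolant in $\Omega$ with an $hp$-interpolant on the geometric mesh $\calG_\sigma^M$ in the extended variable. Here the sharp weighted regularity of $\ue$ (hence of $v$) is indispensable: because $\Omega$ is convex the second-order $x'$-derivatives of $\ue(\cdot,y)$ are controlled through \eqref{eq:Omega_regular}, which produces the factor $h_\T$ from the $\Omega$-interpolation, while near $y=0$ the solution has a singular expansion whose leading term behaves like $y^{2s}$, so $\partial_{yy}\ue$ belongs only to a weighted space $L^2(y^\beta,\C)$ with $\beta > 2\alpha+1$; the geometric grading toward $y=0$ combined with the linearly increasing degree vector $\bmr$ of slope $\slope \ge \slope_{\min}$ absorbs this singular behavior and yields an error that decays exponentially in $M$ (cf.\ \cite[Theorem 2.7]{NOS} and the approximation theory in \cite{BMNOSS:17}). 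Under the stated regime $\Y \sim |\log h_\T| \sim M$ the exponential-in-$M$ and exponential-in-$\Y$ contributions are both $\lesssim h_\T$, and collecting the three estimates gives \eqref{8-6-18f}. The main obstacle is precisely this anisotropic $hp$-approximation estimate on the tensor-product geometric mesh: one has to keep track simultaneously of the Muckenhoupt weight $y^\alpha$, the grading $\sigma$, the degree slope $\slope$, and the $y^{2s}$-type boundary layer of $\ue$ at $y=0$ — which is the technical heart of \cite{BMNOSS:17}, to which the statement is due (see \cite[Theorem 5.14]{BMNOSS:17}).
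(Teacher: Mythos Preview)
The paper does not prove this theorem at all: it simply states the result and attributes it to \cite[Theorem 5.14]{BMNOSS:17}. Your proposal is a correct outline of the argument behind that cited result --- trace estimate for the first inequality, then a truncation/discretization splitting via Lemma~\ref{le:Eq-Hq} and C\'ea's lemma, with the anisotropic tensor-product $hp$-approximation on the geometric mesh as the technical core --- and you rightly identify \cite{BMNOSS:17} as the source of that last step. So there is no discrepancy to flag: the paper defers entirely to the reference, and your sketch faithfully reconstructs what that reference does.
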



\subsection{A fully discrete scheme}
\label{sub:fully_discrete_scheme}
We begin by defining the discrete sets
\begin{equation*}
\mathbb{Q}(\T) = \left\{ Q \in L^{\infty}( \Omega ): Q|_K \in \mathbb{P}_0(K) \quad \forall K \in \T_\Omega \right\},
\quad
\mathbb{Q}_{ad}(\T) = \mathbb{Q}(\T) \cap \calQ,
\end{equation*}
and the discrete coefficient--to--solution operator $F:
\calQ \rightarrow \V(\T_{\Y})$, which associates to an element $q \in \calQ$ the unique discrete solution $V =: F(q)$ of problem \eqref{eq:discrete_equation}. With this operator at hand, we define the discrete reduced cost functional
\begin{equation}
\label{eq:discrete_functional}
\mathcal{D}_{\delta,\rho}(Q):= \frac{1}{2} \| \tr F(Q) - z_{\delta}\|^2_{L^2(\Omega)} + \frac{\rho}{2} \| Q - q^{*}\|^2_{L^2(\Omega)},
\end{equation}
and the following fully discrete approximation of the identification problem \eqref{eq:min_truncated}:
\begin{equation}
\label{eq:min_discrete}
\min_{Q \in \mathbb{Q}_{ad}(\T)} \mathcal{D}_{\delta,\rho}(Q). 
\end{equation}

\begin{lemma}[existence of discrete solutions]
\label{16-5-18f1}
The discrete problem \eqref{eq:min_discrete} has a solution $Q_{\delta,\rho}$.
\end{lemma}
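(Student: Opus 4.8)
The plan is to resort to the direct method of the calculus of variations which, in the present finite--dimensional setting, reduces to a compactness argument combined with the continuity of the discrete cost functional $\mathcal{D}_{\delta,\rho}$.

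First, I would notice that the admissible set $\mathbb{Q}_{ad}(\T)$ is nonempty and compact. Upon identifying $Q \in \mathbb{Q}(\T)$ with the vector $(Q|_K)_{K \in \T} \in \mathbb{R}^{\# \T}$ of its elementwise values, the pointwise constraints $0 \leq Q(x') \leq \bar q$ that define $\calQ$ read $Q|_K \in [0,\bar q]$ for all $K \in \T$. Hence $\mathbb{Q}_{ad}(\T)$ corresponds, up to the equivalence of norms on $\mathbb{R}^{\# \T}$, to the closed box $[0,\bar q]^{\# \T}$, which is a compact subset of the finite--dimensional space $\mathbb{Q}(\T)$; it obviously contains $Q \equiv 0$.

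Second, I would prove that $Q \mapsto \tr F(Q)$ is continuous from $\mathbb{Q}_{ad}(\T)$, equipped with the $L^2(\Omega)$ norm (equivalent to any other norm on this set), into $L^2(\Omega)$. Given $Q_n \to Q$ in $\mathbb{Q}_{ad}(\T)$, I would subtract the discrete equations \eqref{eq:discrete_equation} satisfied by $F(Q_n)$ and $F(Q)$ and test with $W = F(Q_n) - F(Q) \in \V(\T_{\Y})$ to arrive at
\[
\blfa{\C}\bigl(F(Q_n) - F(Q), F(Q_n) - F(Q)\bigr)(Q_n) = -\int_{\C} y^{\alpha}(Q_n - Q)\, F(Q)\, \bigl(F(Q_n) - F(Q)\bigr) \diff x.
\]
Since $Q_n \geq 0$, the form $\blfa{\C}(\cdot,\cdot)(Q_n)$ is coercive on $\HL(y^{\alpha},\C)$ uniformly in $n$; moreover, testing \eqref{eq:discrete_equation} with $W = F(Q)$ and using \eqref{Trace_estimate} gives the discrete stability bound $\|\nabla F(Q)\|_{L^2(y^{\alpha},\C)} \lesssim \|f\|_{\Hsd}$. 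Estimating the right--hand side above by the Cauchy--Schwarz and the Poincar\'e \eqref{Poincare_ineq} inequalities and using $\|Q_n - Q\|_{L^{\infty}(\Omega)} \to 0$, one gets $\|\nabla(F(Q_n) - F(Q))\|_{L^2(y^{\alpha},\C)} \to 0$, whence $\tr F(Q_n) \to \tr F(Q)$ in $\Hs$, and thus in $L^2(\Omega)$, by \eqref{Trace_estimate}. Since the Tikhonov term $Q \mapsto \tfrac{\rho}{2}\|Q - q^*\|^2_{L^2(\Omega)}$ is plainly continuous, $\mathcal{D}_{\delta,\rho}$ is continuous on $\mathbb{Q}_{ad}(\T)$.

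Finally, I would take a minimizing sequence $\{Q_n\} \subset \mathbb{Q}_{ad}(\T)$ for \eqref{eq:min_discrete}; by compactness there is a subsequence, not relabeled, converging to some $Q_{\delta,\rho} \in \mathbb{Q}_{ad}(\T)$, and the continuity of $\mathcal{D}_{\delta,\rho}$ yields $\mathcal{D}_{\delta,\rho}(Q_{\delta,\rho}) = \inf_{Q \in \mathbb{Q}_{ad}(\T)} \mathcal{D}_{\delta,\rho}(Q)$, so $Q_{\delta,\rho}$ is a solution. The only point that needs some care is the continuity of the discrete coefficient--to--solution map $F$; this is the finite--dimensional, and considerably simpler, counterpart of the argument in the proof of Theorem \ref{thm:exist}, where weak$^*$ compactness is now replaced by ordinary compactness and no Vitali--type argument is needed.
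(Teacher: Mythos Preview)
Your proof is correct and follows essentially the same approach as the paper: a minimizing sequence in the finite--dimensional, compact set $\mathbb{Q}_{ad}(\T)$, convergence of a subsequence, and continuity of $F$ to pass to the limit. The paper dispatches the continuity of $F$ with the phrase ``standard arguments,'' whereas you spell out the energy estimate obtained by subtracting the discrete equations; this is exactly the expected elaboration and adds nothing new methodologically.
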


\begin{proof}
Let $(Q_n)_{n \in \mathbb{N}} \subset \mathbb{Q}_{ad}(\T)$ be a minimizing sequence for problem \eqref{eq:min_discrete}. We thus have the existence of a nonrelabeled subsequence of $(Q_n)_{n \in \mathbb{N}}$ that converges, in the $L^2(\Omega)$--norm, to an element $Q$ of the finite dimensional space $\mathbb{Q}_{ad}(\T)$. Standard arguments reveal that the sequence $\left(F(Q_n)\right)_{n \in \mathbb{N}}$ converges to $F(Q)$ in the space $\V(\T_{\Y})$. This concludes the proof.
\end{proof}

In order to present the following result, we define the set
\begin{equation}
\mathcal{I}(u^\dag) := \{q\in \calQ ~|~ U(q) = u^\dag\},
\label{eq:mathcalI}
\end{equation}
In \eqref{eq:mathcalI}, $U$ denotes the coefficient--to--solution operator associated to problem \eqref{eq:fractional_diffusion}. \EO{We assume that $u^\dag$ is an exact state of our identification problem. This immediately yields $\mathcal{I}(u^\dag) \neq \emptyset$.}
 Notice that, in view of Theorem \ref{thm:equivalence}, we have that $\mathcal{I}(u^\dag) = \{q\in \calQ ~|~ \tr E(q) = u^\dag\}$. 

We now introduce the concept of $q^*$-minimum--norm solution for the fractional identification problem \eqref{eq:min_fractional}.

\begin{lemma}[$q^*$-minimum--norm solution]
\label{16-5-18f2}
The optimization problem
\begin{align}\label{16-5-18f3}
\min_{q\in \mathcal{I}(u^\dag)} \|q-q^*\|^2_{L^2(\Omega)}
\end{align}
\EO{attains a solution}, which is called \EO{a} $q^*$-minimum--norm solution of \EO{the identification problem}.
\label{le:q+}
\end{lemma}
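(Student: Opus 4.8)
The plan is to prove existence and uniqueness of a minimizer of $\|q-q^*\|^2_{L^2(\Omega)}$ over the constraint set $\mathcal{I}(u^\dag)$ by the direct method, after first establishing that $\mathcal{I}(u^\dag)$ is a nonempty, closed, convex subset of $L^2(\Omega)$. Nonemptiness is an assumption implicit in the problem: we are told that $u^\dag$ is the exact solution associated to some admissible coefficient, so $\mathcal{I}(u^\dag)\neq\emptyset$. Closedness in $L^2(\Omega)$ will follow from a weak$^*$/weak continuity argument: if $q_n\to q$ in $L^2(\Omega)$ with $q_n\in\mathcal{I}(u^\dag)$, then, since $(q_n)\subset\calQ$ and $\calQ$ is weakly$^*$ closed in $L^\infty(\Omega)$ (as recalled in the proof of Theorem \ref{thm:exist}), we have $q\in\calQ$; and the arguments in the proof of Theorem \ref{thm:exist} show that $\tr E(q_n)\to \tr E(q)$ in $L^2(\Omega)$, hence $\tr E(q)=u^\dag$, i.e. $q\in\mathcal{I}(u^\dag)$. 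For convexity the key point is that the map $q\mapsto\tr E(q)$ is \emph{affine} on... well, it is not affine, so convexity of $\mathcal{I}(u^\dag)$ is the genuinely delicate issue.

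The main obstacle is precisely establishing that $\mathcal{I}(u^\dag)$ is convex, since the coefficient-to-solution map $U$ (equivalently $\tr E$) is nonlinear in $q$. I would handle this through the weak formulation of the fractional problem: $q\in\mathcal{I}(u^\dag)$ means $u^\dag\in\Hs$ with $q$ entering only through the spectral definition of $\calL^s$, which is awkward; it is cleaner to use the extended formulation \eqref{eq:ue-variational}. If $q_0,q_1\in\mathcal{I}(u^\dag)$ with extensions $\ue_0=E(q_0)$, $\ue_1=E(q_1)$, both having the same trace $u^\dag$, set $q_t=(1-t)q_0+tq_1$ and $\ue_t=(1-t)\ue_0+t\ue_1$. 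Testing \eqref{eq:ue-variational} for $\ue_0$ and $\ue_1$ and taking the convex combination gives
\begin{equation*}
\blfa{\C}(\ue_t,\phi)(q_t) = d_s\langle f,\tr\phi\rangle + (1-t)t\!\int_{\C} y^\alpha(q_1-q_0)(\ue_1-\ue_0)\phi\,\diff x,
\end{equation*}
so $\ue_t$ solves the extended equation with coefficient $q_t$ up to the bilinear remainder term, which does not obviously vanish. Thus convexity of $\mathcal{I}(u^\dag)$ does \emph{not} follow from linearity and must instead be argued differently — or the statement should be read as: the functional $q\mapsto\|q-q^*\|_{L^2(\Omega)}^2$ is strictly convex and coercive on the (closed) set $\mathcal{I}(u^\dag)$, so any minimizer over this set is unique \emph{provided} the set is convex. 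I would therefore prove uniqueness by the standard parallelogram-type argument: if $q^\dag_1,q^\dag_2$ are both minimizers with common minimal value, then $\tfrac12(q^\dag_1+q^\dag_2)$ — if it lies in $\mathcal{I}(u^\dag)$ — gives a strictly smaller value unless $q^\dag_1=q^\dag_2$, which forces uniqueness; and this again needs the midpoint to be admissible.

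Given these subtleties, I expect the actual proof in the paper to either (i) assume, or derive from an identifiability/monotonicity hypothesis on the problem, that $\mathcal{I}(u^\dag)$ is convex and closed, and then invoke the elementary fact that a strictly convex, coercive, weakly lower semicontinuous functional attains a unique minimum on a nonempty closed convex set of a Hilbert space; or (ii) simply project $q^*$ onto $\mathcal{I}(u^\dag)$ and note that the projection onto a nonempty closed convex set of a Hilbert space exists and is unique. Concretely, the steps I would carry out are: first, record that $\mathcal{I}(u^\dag)\neq\emptyset$ (by hypothesis on $u^\dag$) and bounded (since $\mathcal{I}(u^\dag)\subset\calQ$, which is bounded in $L^2(\Omega)$); second, prove $\mathcal{I}(u^\dag)$ is weakly closed in $L^2(\Omega)$ by combining weak$^*$ closedness of $\calQ$ with the continuity argument from Theorem \ref{thm:exist}; third, invoke convexity of $\mathcal{I}(u^\dag)$; fourth, apply the projection theorem in $L^2(\Omega)$ — the functional $q\mapsto\|q-q^*\|_{L^2(\Omega)}^2$ is strictly convex, coercive, and continuous, so it attains its infimum over the nonempty closed convex set $\mathcal{I}(u^\dag)$ at exactly one point $q^\dag$. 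The delicate third step is where I anticipate the paper either appeals to a structural property not visible in the excerpt or restricts attention to a situation (e.g. using the equivalence $\tr E(q)=U(q)$ together with the linearity of $q\mapsto \calL^s u$ read off the weak form $\langle f,v\rangle = b_q(u,v)$ type identity) in which convexity genuinely holds.
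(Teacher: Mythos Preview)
Your overall strategy matches the paper's exactly: show that $\mathcal{I}(u^\dag)$ is nonempty, bounded, weakly closed, and convex in $L^2(\Omega)$, then invoke strict convexity and weak lower semicontinuity of $q\mapsto\|q-q^*\|^2_{L^2(\Omega)}$ to obtain a unique minimizer. The paper's proof is a terse paragraph: weak closedness is deferred to the arguments in the proof of Theorem~\ref{thm:exist}; convexity and boundedness are declared ``immediate''; and the conclusion follows from weak sequential compactness plus weak lower semicontinuity.

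Your concern about convexity is well placed and is, in fact, a point the paper does not justify. For $s=1$ convexity is genuinely immediate, since $q\mapsto\calL(q)u^\dag$ is affine in $q$: if $\calL(q_i)u^\dag=f$ for $i=0,1$, then $\calL(q_t)u^\dag=f$ for every convex combination $q_t$. For $s\in(0,1)$, however, $q\mapsto\calL(q)^s$ is not affine, and your computation in the extended formulation exhibits the obstruction explicitly --- the remainder $(1-t)t\int_{\C}y^\alpha(q_1-q_0)(\ue_1-\ue_0)\phi\,\diff x$ need not vanish because $E(q_0)$ and $E(q_1)$ can differ on $\C$ even when their traces on $\Omega\times\{0\}$ coincide. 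So your proposal is correct modulo the convexity step, which the paper simply asserts without argument; you have not missed an idea that the paper supplies --- you have identified a gap that is present in the paper's own proof.
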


\begin{proof}
Let $(q_n)_{n \in \mathbb{N}} \subset \mathcal{I}(u^\dag)$ be a minimizing sequence for problem \eqref{16-5-18f3}, \ie
\begin{align*}
\limn \|q_n-q^*\|^2_{L^2(\Omega)} = \inf_{q\in \mathcal{I}(u^\dag)} \|q-q^*\|^2_{L^2(\Omega)};
\end{align*}
such a sequence does exist by the definition of the infimum. In view of the arguments elaborated in the proof of Theorem \ref{thm:exist}, we deduce the existence of a subsequence $(q_{n_k})_{k \in \mathbb{N}} \subset (q_n)_{n \in \mathbb{N}}$ and an element $\hat q\in \mathcal{Q}$ such that $(q_{n_k})_{k \in \mathbb{N}}$ converges weakly to $\hat q$ in $L^2(\Omega)$ and $\tr \ue(q_{n_k})$ converges strongly to $\tr \ue (\hat q)$ in $L^2(\Omega)$. Since $q_{n_k} \in \mathcal{I}(u^\dag)$, it follows that $\tr \ue(q_{n_k}) = u^\dag$ for all $k\in\mathbb{N}$. Consequently, $\hat q\in \mathcal{I}(u^\dag)$. Now, since $\| \cdot \|_{L^2(\Omega)}$ is a weakly lower semi--continuous function, we conclude that
\[
\|\hat q-q^*\|^2_{L^2(\Omega)} \le \liminf_{k\to\infty} \|q_{n_k}-q^*\|^2_{L^2(\Omega)} = \limn \|q_n-q^*\|^2_{L^2(\Omega)} = \inf_{q\in \mathcal{I}(u^\dag)} \|q-q^*\|^2_{L^2(\Omega)}.
\]
This means that $\hat q$ is a solution of problem \ref{16-5-18f3}. \EO{This concludes the proof.}
\end{proof}

Since it will be instrumental in the analysis that we will perform, we introduce the $L^2(\Omega)$--orthogonal projection operator $\pi_{x'}: L^2(\Omega) \rightarrow \mathbb{Q}(\T)$ as follows: For $K \in \T$ and $q \in L^2(\Omega)$, $\pi_{x'}$ is defined as \cite[Section 1.63]{Guermond-Ern}
\begin{equation}
\label{eq:ortogonal_constant}
\pi_{x'} q|_{K} = \frac{1}{|K|} \int_K q(x') \diff x'.
\end{equation}
Notice that $\pi_{x'} \mathcal{Q} \subset \mathbb{Q}_{ad}(\T_{\Omega})$. In addition, for $1 \leq p \leq \infty$, $\sigma \in (0,1]$, and $q \in W^{\sigma,p}(\Omega)$, we have the following error estimate \cite[Proposition 1.135]{Guermond-Ern}:
\begin{equation}
\label{eq:ortogonal_constant_estimate}
 \| q - \pi_{x'} q \|_{L^p(\Omega)} \lesssim h_{\T}^\sigma | q |_{W^{\sigma,p}(\Omega)}
\end{equation}

In the following result we show that the finite element solutions of problem \eqref{eq:min_discrete} converge to \EO{a} $q^*$-minimum--norm solution of problem \eqref{16-5-18f3}. We stress that the results of Theorem \ref{convergence1} below do not require assuming \eqref{eq:Jcoercive}.

\begin{theorem}[convergence of solutions]
\label{convergence1}
Let $\left(\T_{h_n}\right)_n$ be a sequence of conforming, shape--regular, and  quasi--uniform meshes of $\bar{\Omega}$ and let $h_n:= h_{\T_n}$ be the meshwidth of $\T_n$. \EO{Assume that there exists $0<\gamma < 1$ such that $q^\dag \in W^{\gamma,2}(\Omega)$ with  $q^\dag$ being a solution of \eqref{16-5-18f3}.}
Let $\left(\delta_n\right)_{n \in \mathbb{N}}$ be a sequence in $\mathbb{R}^+$, and consider $\rho_n := \rho_{\delta_n,h_n}$ to be such that
\begin{equation}\label{8-6-18f4}
\rho_n \to 0, \quad \frac{\delta_n^2}{\rho_n} \to 0,
\quad
\mbox{and} 
\quad \frac{ h_n^{2\gamma}  }{\rho_n} \to 0
\enskip\mbox{as}\enskip n \uparrow \infty.
\end{equation}
Let $\left(z_{n}\right)_{n \in \mathbb{N}} = \left(z_{\delta_n}\right)_{n \in \mathbb{N}}$ be a sequence in $L^2(\Omega)$ such that $ \| u^\dag - z_{n} \|_{L^2(\Omega)} \le \delta_n$ and let $Q_n$ be
a minimizer of the following problem (cf.\ \eqref{eq:min_discrete}):
\begin{equation}
\label{eq:minn}
\min_{Q \in \mathbb{Q}_{ad}(\T_{\Omega})} \mathcal{D}_{\delta_n,\rho_n}(Q) := \frac{1}{2} \| \tr F(Q) - z_{n}\|^2_{L^2(\Omega)} + \frac{\rho_n}{2} \| Q - q^{*}\|^2_{L^2(\Omega)}. 
\end{equation}
\EO{Then, there exists a subsequence of $(Q_n)_{n \in \mathbb{N}}$ that converges to a solution of \eqref{16-5-18f3} in $L^2(\Omega)$ as $n \uparrow \infty$. If $q^\dag$ is unique, then convergence holds for the whole sequence.}
\end{theorem}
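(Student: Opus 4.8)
The plan is to adapt the classical convergence analysis for Tikhonov regularization (as in, e.g., \cite{MR1408680,MR2554448}) to the present fully discrete, truncated setting, using the $q^*$-minimum--norm solution $q^\dag$ of Lemma \ref{le:q+} as the target. First I would record the discrete analogue of the forward stability: since $q^\dag \in W^{\gamma,\infty}(\Omega)$, the projection $\pi_{x'} q^\dag \in \mathbb{Q}_{ad}(\T_n)$ is an admissible competitor in \eqref{eq:minn}, and \eqref{eq:ortogonal_constant_estimate} gives $\| q^\dag - \pi_{x'} q^\dag\|_{L^2(\Omega)} \lesssim h_n^{\gamma} |q^\dag|_{W^{\gamma,\infty}(\Omega)}$. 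Testing the minimality of $Q_n$ against $\pi_{x'} q^\dag$ yields
\[
\tfrac12 \| \tr F(Q_n) - z_n\|^2_{L^2(\Omega)} + \tfrac{\rho_n}{2}\| Q_n - q^*\|^2_{L^2(\Omega)} \leq \tfrac12 \| \tr F(\pi_{x'} q^\dag) - z_n\|^2_{L^2(\Omega)} + \tfrac{\rho_n}{2}\| \pi_{x'}q^\dag - q^*\|^2_{L^2(\Omega)}.
\]

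Next I would bound the right-hand side. For the first term, write $\tr F(\pi_{x'}q^\dag) - z_n = (\tr F(\pi_{x'}q^\dag) - \tr E(\pi_{x'}q^\dag)) + (\tr E(\pi_{x'}q^\dag) - \tr E(q^\dag)) + (u^\dag - z_n)$. The first summand is controlled by the truncation estimate of Lemma \ref{le:Eq-Hq} together with the finite element estimate \eqref{8-6-18f} (both $\lesssim h_n$ once $\Y \sim |\log h_n|$, hence negligible against $h_n^\gamma$); the second by the stability estimate of Lemma \ref{le:stab_for_gradient}, giving $\lesssim h_n^{\gamma}$; and the third is $\leq \delta_n$. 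For the second term, $\| \pi_{x'}q^\dag - q^*\|_{L^2(\Omega)} \leq \|q^\dag - q^*\|_{L^2(\Omega)} + \mathcal{O}(h_n^\gamma)$. Dividing through by $\rho_n$ and using the parameter choices \eqref{8-6-18f4}, one obtains
\[
\| \tr F(Q_n) - z_n\|_{L^2(\Omega)} \to 0, \qquad \limsup_{n\to\infty} \|Q_n - q^*\|_{L^2(\Omega)} \leq \|q^\dag - q^*\|_{L^2(\Omega)}.
\]

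The second bound shows $(Q_n)$ is bounded in $L^2(\Omega)$ (indeed it lies in $\calQ$, which is bounded and convex, hence weakly sequentially compact), so every subsequence has a further subsequence $Q_{n_k} \rightharpoonup \bar q$ in $L^2(\Omega)$, and $\bar q \in \calQ$ by weak closedness. The key step is to identify $\bar q$ with $q^\dag$: I would show $\tr E(\bar q) = u^\dag$, i.e.\ $\bar q \in \mathcal{I}(u^\dag)$, by passing to the limit in $\tr F(Q_{n_k})$. This requires combining (a) the weak$^*$/weak continuity-type argument of Theorem \ref{thm:exist} (the Vitali-type passage to the limit handling the reaction term under weak convergence of coefficients) to get $\tr E(Q_{n_k}) \to \tr E(\bar q)$ in $L^2(\Omega)$, with (b) the uniform bounds $\|\tr F(Q) - \tr E(Q)\|_{L^2(\Omega)} \lesssim h_n$ from Lemma \ref{le:Eq-Hq} and \eqref{8-6-18f}; then $\tr F(Q_{n_k}) \to \tr E(\bar q)$, while the first displayed limit forces $\tr E(\bar q) = \lim z_{n_k} = u^\dag$. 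Since also $\|\bar q - q^*\|_{L^2(\Omega)} \leq \liminf \|Q_{n_k}-q^*\|_{L^2(\Omega)} \leq \|q^\dag - q^*\|_{L^2(\Omega)}$ by weak lower semicontinuity, the uniqueness in Lemma \ref{le:q+} gives $\bar q = q^\dag$ and, moreover, $\|Q_{n_k} - q^*\|_{L^2(\Omega)} \to \|q^\dag - q^*\|_{L^2(\Omega)}$; combined with weak convergence in the Hilbert space $L^2(\Omega)$ this upgrades to strong convergence $Q_{n_k} \to q^\dag$. A standard subsequence-of-subsequence argument then yields convergence of the whole sequence. I expect the main obstacle to be step (a)--(b): carefully justifying that the nonlinear, nonlocal-in-origin coefficient-to-solution map behaves well under \emph{weak} convergence of the discrete coefficients $Q_{n_k}$ while simultaneously tracking the discretization and truncation errors uniformly in $n$ — in particular re-checking that the exponential-decay/uniform-integrability ingredient from \cite[Proposition 3.1]{NOS} used in Theorem \ref{thm:exist} applies uniformly along the sequence so that the Vitali convergence theorem can be invoked.
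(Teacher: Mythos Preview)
Your proposal is correct and follows the same strategy as the paper: compare $Q_n$ against the competitor $\pi_{x'}q^\dag$, bound the resulting misfit by $\mathcal{O}(\delta_n + h_n^\gamma)$, extract a weak$^*$ limit, identify it with $q^\dag$ by combining the weak-continuity machinery of Theorem~\ref{thm:exist} with the uniform discretization bound \eqref{8-6-18f}, and upgrade to strong $L^2$-convergence via the uniqueness of $q^\dag$ and norm convergence. The only minor deviation is that the paper bounds $\tr E(\pi_{x'}q^\dag)-\tr E(q^\dag)$ directly through $\|\pi_{x'}q^\dag - q^\dag\|_{L^\infty(\Omega)}\lesssim h_n^\gamma |q^\dag|_{W^{\gamma,\infty}(\Omega)}$ (i.e.\ \eqref{eq:ortogonal_constant_estimate} with $p=\infty$) rather than via Lemma~\ref{le:stab_for_gradient}, which lets it avoid the side hypothesis $\|\mathtt{e}(\cdot)\|_{L^\infty(\Omega)}<\infty$ that your route implicitly invokes.
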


\begin{proof} 
Since $Q_n$ is optimal for problem \eqref{eq:minn}, we immediately arrive at
\begin{align}\label{24/10:ct1}
\mathcal{D}_{\delta_n,\rho_n}(Q_n)
&\le \frac{1}{2} \| \tr F(\pi_{x'}q^\dag) - z_{n}\|^2_{L^2(\Omega)} + \frac{\rho_n}{2} \| \pi_{x'}q^\dag - q^{*}\|^2_{L^2(\Omega)},
\end{align}
where $\pi_{x'}$ is defined as in \eqref{eq:ortogonal_constant}.

We now proceed to estimate the first term on the right--hand side of \eqref{24/10:ct1}. In view of the estimates \eqref{8-6-18f} and $\| u^\dag - z_{n} \|_{L^2(\Omega)} \le \delta_n$, we obtain that
\begin{multline}
\label{8-6-18f1}
\| \tr F(\pi_{x'}q^\dag) - z_{n}\|_{L^2(\Omega)} 
\leq \| u(\pi_{x'} q^\dag) - \tr F(\pi_{x'}q^\dag)\|_{L^2(\Omega)} + \| u(q^\dag) - u(\pi_{x'} q^\dag)\|_{L^2(\Omega)} 
\\
+ \| z_{n} -u(q^\dag) \|_{L^2(\Omega)} 
\leq Ch_n\| f \|_{\Ws}  + \| u(q^\dag) - u(\pi_{x'} q^\dag)\|_{L^2(\Omega)} + \delta_n.
\end{multline}
We now invoke the extension property \eqref{eq:identity2} and the identity \eqref{eq:norms_are_equal} to arrive at
\begin{multline}
\| u(q^\dag) - u(\pi_{x'} q^\dag)\|_{L^2(\Omega)} = \| \tr \ue (q^\dag) - \tr \ue (\pi_{x'} q^\dag)\|_{L^2(\Omega)} 
\\
\leq \| \tr \ue (q^\dag) - \tr \ue (\pi_{x'} q^\dag)\|_{\Hs} \lesssim  \normC{ \ue (q^\dag) - \ue (\pi_{x'} q^\dag)}.
\label{8-6-18f2}
\end{multline}
It thus suffices to bound $ \normC{ \ue (q^\dag) - \ue (\pi_{x'} q^\dag)}$. Let $ \phi \in \HL(y^\alpha,\C)$. We invoke the problems that $\ue (q^\dag)$ and $ \ue (\pi_{x'} q^\dag)$ solve, combined with the estimates of Remark \ref{rk:first_order}, to arrive at
\begin{align*} 
\blfa{\C} (\ue (q^\dag) - \ue (\pi_{x'} q^\dag),\phi )(q^\dag)
&= \int_\C y^\alpha (\pi_{x'} q^\dag - q^\dag) \ue (\pi_{x'} q^\dag) \phi \diff x' \diff y\\
&\lesssim \| \pi_{x'} q^\dag - q^\dag\|_{L^2(\Omega)} \| \mathtt{e}(\pi_{x'} q^\dag) \|^{\frac{1}{2}}_{L^{\infty}(\Omega)} \|\phi\|_{L^2(y^\alpha,\C)}.
\end{align*}
This implies, in view of the estimate \eqref{eq:ortogonal_constant_estimate}, that 
\[
\normC{ \ue (q^\dag) - \ue (\pi_{x'} q^\dag)} \lesssim h_n^{\gamma} | q^\dag |_{W^{\gamma,2}(\Omega )} \| \mathtt{e}(\pi_{x'} q^\dag) \|^{\frac{1}{2}}_{L^{\infty}(\Omega)}.
\]
This, combined with \eqref{8-6-18f1}--\eqref{8-6-18f2} allow us to conclude
\begin{multline}
\label{8-6-18f3}
\| \tr F(\pi_{x'}q^\dag) - z_{n}\|_{L^2(\Omega)} \lesssim \delta_n + h_n^{\gamma}  | q^\dag |_{W^{\gamma,2}(\Omega)} 
\| \mathtt{e}(\pi_{x'} q^\dag) \|^{\frac{1}{2}}_{L^{\infty}(\Omega)}
\\
+ h_n\| f \|_{\Ws}.
\end{multline}
With the previous estimate at hand, we invoke \eqref{8-6-18f4}--\eqref{24/10:ct1} and conclude that
\begin{align}
\limsup_{n \rightarrow \infty}  \|Q_n - q^*  \|^2_{L^2(\Omega)} \leq \| q^\dag- q^* \|^2_{L^2(\Omega)}.\label{5-12them2}
\end{align}

The estimate \eqref{5-12them2} yields the existence of $\widehat{q} \in \mathcal{Q}$ and a non--relabeled subsequence $ (Q_n)_{n \in \mathbb{N}}$ such that $(Q_n)_{n \in \mathbb{N}}$ converges weakly* to $\widehat{q}$ in $L^\infty(\Omega)$. The arguments developed in the proof of Theorem \ref{thm:exist} thus yield $\|\tr ( \ue (Q_n) - \ue (\widehat{q}) )\|_{L^2(\Omega)} \to 0$ as $n \uparrow \infty$. On the other hand, the trace estimate \eqref{Trace_estimate} and \eqref{8-6-18f}, reveal that
\begin{align*}
\|\tr (\ue (Q_n) - F(Q_n))\|_{L^2(\Omega)} \lesssim \| \nabla(\ue(Q_n) - F(Q_n)) \|_{L^2(y^{\alpha},\C)} \lesssim h_n\| f \|_{\Ws} \to 0
\end{align*}
as $n \uparrow \infty$.
Consequently,
\begin{multline*}
\|\tr \ue (\widehat{q}) - u^\dag\|_{L^2(\Omega)} 
= \limn \|\tr \ue (Q_n) - z_{n}\|_{L^2(\Omega)} \\
\leq \limn \|\tr ( \ue (Q_n) -  F(Q_n) )\|_{L^2(\Omega)} + \limn \|\tr F(Q_n) - z_{n}\|_{L^2(\Omega)} = 0,
\end{multline*}
where we have used that
\begin{multline}
 \limn \frac{1}{2} \|\tr F(Q_n) - z_{n}\|_{L^2(\Omega)} \leq  \limn  \bigg( \frac{1}{2} \| \tr F(\pi_{x'}q^\dag) - z_{n}\|^2_{L^2(\Omega)} 
 \\
 + \frac{\rho_n}{2} \| \pi_{x'}q^\dag - q^{*}\|^2_{L^2(\Omega)} \bigg) = 0,
\end{multline}
which follows from \eqref{24/10:ct1}, \eqref{8-6-18f1}, and \eqref{8-6-18f3}.
We have thus concluded that $\widehat{q} \in \mathcal{I}(u^\dag)$.

Finally, since $q^\dag$ solves \eqref{16-5-18f3}, the fact that $(Q_n)_{n \in \mathbb{N}}$ converges weakly to $\widehat{q}$ in $L^2(\Omega)$, and the estimate \eqref{5-12them2} allow us to conclude that
\begin{align*}
 \|q^\dag - q^* \|^2_{L^2(\Omega)} &\le
\|\widehat{q} - q^* \|^2_{L^2(\Omega)} \le \liminfn
\|Q_n - q^* \|^2_{L^2(\Omega)}
\\
& \le \limsupn
\|Q_n - q^* \|^2_{L^2(\Omega)}
\le \|q^\dag - q^* \|^2_{L^2(\Omega)}.
\end{align*}
\EO{Consequently, $\widehat{q}$ solves \eqref{16-5-18f3}. In addition, $\limn
\|Q_n - q^* \|^2_{L^2(\Omega)} = \|\widehat{q} - q^* \|^2_{L^2(\Omega)}$. This concludes the proof.}
\end{proof}

\subsection{A priori error estimates}
\label{sub:a_priori}
In this section we provide an a priori error analysis for the fully discrete identification problem \eqref{eq:min_discrete} when approximating solutions to \eqref{eq:min_fractional}. To accomplish this task, we follow \cite{MR2536007} and introduce, for $\epsilon >0$, $h_{\T}>0$, and a local solution $r_{\delta,\rho}$ of problem \eqref{eq:min_truncated}, the following minimization problem:
\begin{equation}
\label{eq:min_discrete_epsilon}
\min_{Q \in \mathbb{Q}(r_{\delta,\rho})} \mathcal{D}_{\delta,\rho}(Q),
\end{equation}
where the functional $\mathcal{D}_{\delta,\rho}$ is defined in \eqref{eq:discrete_functional} and $\mathbb{Q}(r_{\delta,\rho}):= \{ Q \in \mathbb{Q}_{ad}(\T) : \| Q - r_{\delta,\rho}\|_{L^2(\Omega)} \leq \epsilon \}$. The next result guarantees existence of solutions for problem \eqref{eq:min_discrete_epsilon}.

\begin{lemma}[existence of solutions]
If $h_{\T}$ is sufficiently small, then problem \eqref{eq:min_discrete_epsilon} has a solution $R_{\delta,\rho}$.
\end{lemma}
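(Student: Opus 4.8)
The plan is to apply the direct method in the finite--dimensional space $\mathbb{Q}(\T)$: the feasible set $\mathbb{Q}(r_{\delta,\rho})$ is compact and the functional $\mathcal{D}_{\delta,\rho}$ is continuous, so a minimizer exists as soon as $\mathbb{Q}(r_{\delta,\rho})$ is nonempty; the smallness of $h_{\T}$ is used precisely to secure this last point.

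First I would prove that $\mathbb{Q}(r_{\delta,\rho}) \neq \emptyset$ for $h_{\T}$ sufficiently small by exhibiting the feasible element $\pi_{x'} r_{\delta,\rho}$, where $\pi_{x'}$ is the $L^2(\Omega)$--orthogonal projection onto $\mathbb{Q}(\T)$ from \eqref{eq:ortogonal_constant}. Since $\pi_{x'}$ acts by local averaging, the pointwise bounds $0 \leq r_{\delta,\rho}(x') \leq \bar q$ are inherited by $\pi_{x'} r_{\delta,\rho}$, so that $\pi_{x'} r_{\delta,\rho} \in \mathbb{Q}(\T) \cap \calQ = \mathbb{Q}_{ad}(\T)$. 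Moreover $\| r_{\delta,\rho} - \pi_{x'} r_{\delta,\rho}\|_{L^2(\Omega)} \to 0$ as $h_{\T} \downarrow 0$: under the standing assumption $q^* \in H^1(\Omega)$, Theorem \ref{thm:estimate_grad_frakp2} gives $r_{\delta,\rho} \in H^1(\Omega)$ and \eqref{eq:ortogonal_constant_estimate} yields the quantitative bound $\| r_{\delta,\rho} - \pi_{x'} r_{\delta,\rho}\|_{L^2(\Omega)} \lesssim h_{\T} \| \nabla_{x'} r_{\delta,\rho}\|_{L^2(\Omega)}$, while in general the convergence follows from $\pi_{x'} q \to q$ in $L^2(\Omega)$, valid for every $q \in L^2(\Omega)$. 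Hence there is $h_0 > 0$ such that $\| r_{\delta,\rho} - \pi_{x'} r_{\delta,\rho}\|_{L^2(\Omega)} \leq \epsilon$ for all $h_{\T} \leq h_0$, i.e.\ $\pi_{x'} r_{\delta,\rho} \in \mathbb{Q}(r_{\delta,\rho})$.

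Next I would note that $\mathbb{Q}(r_{\delta,\rho})$ is closed and bounded in the finite--dimensional space $\mathbb{Q}(\T)$, hence compact, and that $\mathcal{D}_{\delta,\rho}$ defined in \eqref{eq:discrete_functional} is continuous on $\mathbb{Q}(\T)$: the discrete coefficient--to--solution map $Q \mapsto F(Q)$ is continuous (as established in the proof of Lemma \ref{16-5-18f1}), $\tr$ is linear and bounded, and $\|\cdot\|_{L^2(\Omega)}$ is continuous. Taking a minimizing sequence for \eqref{eq:min_discrete_epsilon} inside the compact set $\mathbb{Q}(r_{\delta,\rho})$, we extract a subsequence converging (in $L^2(\Omega)$, equivalently in any norm on the finite--dimensional space) to some $R_{\delta,\rho} \in \mathbb{Q}(r_{\delta,\rho})$, and continuity of $\mathcal{D}_{\delta,\rho}$ shows that $R_{\delta,\rho}$ attains the infimum. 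The only real obstacle is the nonemptiness step: one must simultaneously respect the box constraints defining $\calQ$ --- a structural property of local averages --- and remain in the $L^2$--ball of radius $\epsilon$ about $r_{\delta,\rho}$, which is what forces the restriction on $h_{\T}$; the remainder is the Weierstrass theorem in finite dimensions.
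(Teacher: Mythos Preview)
Your proof is correct and follows essentially the same approach as the paper: show $\mathbb{Q}(r_{\delta,\rho}) \neq \emptyset$ by verifying that $\pi_{x'} r_{\delta,\rho}$ lies in it for $h_{\T}$ small, then invoke standard finite--dimensional compactness/continuity arguments for existence. The paper's own proof is in fact terser --- it cites a ``density argument'' for the first step and ``standard arguments'' for the second --- so your version simply fills in those details.
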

\begin{proof}
Define $Q:= \pi_{x'} r_{\delta,\rho}$. A density argument reveals that, if $h_{\T}$ is sufficiently small, then $\| Q - r_{\delta,\rho} \|_{L^2(\Omega)} \leq \epsilon$. Consequently, $\mathbb{Q}(r_{\delta,\rho}) \neq \emptyset$. We thus invoke standard arguments to obtain the desired result. This concludes the proof.
\end{proof}

In the following result, we state a coercivity property for the second order derivatives of the discrete reduced cost functional in a neighborhood of a local solution $r_{\delta,\rho}$ of problem \eqref{eq:min_truncated}.

\begin{lemma}[local coercivity of $\mathcal{D}_{\delta,\rho}''$]
If the assumption \eqref{eq:Jcoercive} holds, $r_{\delta,\rho}$ denotes a local solution for problem \eqref{eq:min_truncated} and  $h_{\T}$ is sufficiently small, then there exist $\epsilon > 0$ such that for every $\hat q \in \mathcal{Q}$ that belongs to neighborhood $ \| \hat q - r_{\delta,\rho}\|_{L^2(\Omega)} \leq \epsilon$, we have that
\begin{equation}
\label{eq:Dcoercive}
\mathcal{D}_{\delta,\rho}''(\hat q)(q,q) \geq \frac{\theta}{4} \| q \|^2_{L^2(\Omega)},
\end{equation}
for all $q \in L^{\infty}(\Omega)$.
\label{le:Dcoercive}
\end{lemma}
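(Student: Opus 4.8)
The strategy is to transfer the coercivity of $\mathcal{J}_{\delta,\rho}''$ — which we have assumed to hold in a neighborhood of the local solution $q_{\delta,\rho}$ of \eqref{eq:min_extended} via \eqref{eq:Jcoercive} — first to $\mathcal{R}_{\delta,\rho}''$ (the truncated functional), and then to $\mathcal{D}_{\delta,\rho}''$ (the fully discrete functional). Since $r_{\delta,\rho}$ is a local solution of the truncated problem \eqref{eq:min_truncated} and, by the convergence theorem of \S\ref{subsec:convergence}, $r_{\delta,\rho}$ is close to $q_{\delta,\rho}$ in $L^2(\Omega)$ once $\Y$ is large, any $\hat q$ in a small $L^2(\Omega)$--neighborhood of $r_{\delta,\rho}$ also lies in the neighborhood of $q_{\delta,\rho}$ where \eqref{eq:Jcoercive} is valid. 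Thus it suffices to bound the two differences $|\mathcal{J}_{\delta,\rho}''(\hat q)(q,q) - \mathcal{R}_{\delta,\rho}''(\hat q)(q,q)|$ and $|\mathcal{R}_{\delta,\rho}''(\hat q)(q,q) - \mathcal{D}_{\delta,\rho}''(\hat q)(q,q)|$, absorbing each into the gap between $\theta/2$ and $\theta/4$.

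First I would write out $\mathcal{J}_{\delta,\rho}''$, $\mathcal{R}_{\delta,\rho}''$, and $\mathcal{D}_{\delta,\rho}''$ explicitly in terms of the respective coefficient--to--solution operators $E$, $H$, $F$, their first and second Fréchet derivatives (Lemmas \ref{lemma:differentiability} and \ref{higher-order-dif.}), the adjoint states, and the Tikhonov term $\rho\|q\|_{L^2(\Omega)}^2$. Note the regularization term $\rho\|q - q^*\|_{L^2(\Omega)}^2$ contributes the \emph{same} positive piece $\rho\|q\|_{L^2(\Omega)}^2$ to all three second derivatives, so the differences involve only the data--misfit contributions, which are themselves controlled by $\|\mathtt{e}\|_{L^\infty}$, $\|\mathtt{h}\|_{L^\infty}$, $\|\mathtt{p}\|_{L^\infty}$, $\|\mathtt{t}\|_{L^\infty}$ and the stability/exponential estimates already established. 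The term $|\mathcal{J}'' - \mathcal{R}''|$ is bounded by a constant times $e^{-\kappa\Y/4}\|f\|_{\Hsd}\|q\|_{L^2(\Omega)}^2$, using the exponential estimate of Lemma \ref{le:Eq-Hq} together with the arguments of Lemma \ref{le:xp_derivative} applied to the second derivatives (the extra factor $\|q\|_{L^2(\Omega)}^2$ comes from the two directional arguments). Hence, taking $\Y \geq \Y_0$ large enough makes this $\leq \tfrac{\theta}{8}\|q\|_{L^2(\Omega)}^2$.

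Next, for the discrete difference $|\mathcal{R}'' - \mathcal{D}''|$, I would invoke the finite element a priori estimate \eqref{8-6-18f} for the state and its analogue for the derivative equations \eqref{24-4-18f1} and the adjoint \eqref{eq:truncated_adjoint}: each discrete quantity $F(\hat q)$, $F'(\hat q)q$, the discrete adjoint, etc., approximates its continuous counterpart $H(\hat q)$, $H'(\hat q)q$, $p(\hat q)$ with an $O(h_{\T})$ bound in the energy norm, uniformly in $\hat q \in \calQ$ and with the $L^\infty$--bounds on $\mathtt{h}$, $\mathtt{t}$ absorbing the coefficient dependence. Telescoping these into the bilinear expressions for the second derivatives yields $|\mathcal{R}_{\delta,\rho}''(\hat q)(q,q) - \mathcal{D}_{\delta,\rho}''(\hat q)(q,q)| \lesssim h_{\T}\,\|f\|_{\Hsd}\,(\cdots)\,\|q\|_{L^2(\Omega)}^2$; choosing $h_{\T}$ sufficiently small makes this $\leq \tfrac{\theta}{8}\|q\|_{L^2(\Omega)}^2$. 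Combining the two bounds with \eqref{eq:Jcoercive},
\[
\mathcal{D}_{\delta,\rho}''(\hat q)(q,q) \geq \mathcal{J}_{\delta,\rho}''(\hat q)(q,q) - \tfrac{\theta}{8}\|q\|_{L^2(\Omega)}^2 - \tfrac{\theta}{8}\|q\|_{L^2(\Omega)}^2 \geq \tfrac{\theta}{2}\|q\|_{L^2(\Omega)}^2 - \tfrac{\theta}{4}\|q\|_{L^2(\Omega)}^2 = \tfrac{\theta}{4}\|q\|_{L^2(\Omega)}^2,
\]
which is the claim. The main obstacle I anticipate is the bookkeeping for the second--derivative differences: one must carefully expand $\mathcal{D}_{\delta,\rho}''$ using the product rule and control four or five pairs of terms (state--vs--state, first derivative--vs--first derivative, second derivative--vs--second derivative, adjoint--vs--adjoint, cross terms), ensuring at each step that the relevant $L^\infty$--norm of $\mathtt{e}$, $\mathtt{h}$, $\mathtt{p}$, or $\mathtt{t}$ is finite (as assumed) and that the hidden constants are uniform over the neighborhood of $r_{\delta,\rho}$; the exponential and FEM estimates themselves are already available and enter as black boxes.
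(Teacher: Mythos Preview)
The paper does not supply its own proof of this lemma; it simply refers the reader to \cite[Lemma 4.7]{MR2536007}. Your plan is precisely the standard argument that such references carry out: show that the second derivative of the discrete reduced functional differs from that of the continuous one by a quantity that tends to zero with the discretization parameter (here $h_{\T}$), uniformly over $\hat q$ in the relevant neighborhood and over directions $q$, and then absorb this difference into the gap between $\theta/2$ and $\theta/4$. So in substance your route and the paper's intended route coincide.

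One small point of divergence: you route the argument through $\mathcal{J}_{\delta,\rho}''$, invoking \eqref{eq:Jcoercive} for the \emph{extended} problem and then spending an intermediate step to pass to $\mathcal{R}_{\delta,\rho}''$ via the exponential truncation estimate. The paper, by contrast, treats the coercivity \eqref{eq:second_order_R} of $\mathcal{R}_{\delta,\rho}''$ in a neighborhood of $r_{\delta,\rho}$ as already available (see step~1 in the proof of Theorem~\ref{rhm:error_estimate}), so the cited lemma only needs the single comparison $|\mathcal{R}_{\delta,\rho}''-\mathcal{D}_{\delta,\rho}''|\lesssim h_{\T}\|q\|_{L^2(\Omega)}^2$. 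Your extra step is not wrong, and in fact makes explicit how \eqref{eq:second_order_R} follows from \eqref{eq:Jcoercive}; it just does slightly more work than the paper asks for at this point. The bookkeeping you anticipate (expanding the second derivatives via the product rule into state, first--derivative, second--derivative, and adjoint contributions, and controlling each pair with the available stability, exponential, and FEM estimates under the standing $L^\infty$ assumptions on $\mathtt{e},\mathtt{h},\mathtt{p},\mathtt{t}$) is exactly what is required and presents no hidden obstacle.
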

\begin{proof}
See \cite[Lemma 4.7]{MR2536007}.
\end{proof}

\begin{lemma}[uniqueness]
Let $\epsilon >0$ be sufficiently small such that \eqref{eq:Dcoercive} holds for $\hat q \in \mathbb{Q}(r_{\delta,\rho})$ and $q \in L^{\infty}(\Omega)$. If $h_{\T}$ is sufficiently small, then problem \eqref{eq:min_discrete_epsilon} admits a unique solution $R_{\delta,\rho}$.
\label{le:uniqueness}
\end{lemma}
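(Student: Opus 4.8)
The statement to prove is Lemma \ref{le:uniqueness}: under the assumption that $\epsilon>0$ is small enough that the local coercivity estimate \eqref{eq:Dcoercive} of Lemma \ref{le:Dcoercive} holds on $\mathbb{Q}(r_{\delta,\rho})$, and that $h_{\T}$ is small enough, the auxiliary discrete problem \eqref{eq:min_discrete_epsilon} has a unique solution. Existence was already established in the preceding lemma, so the work is entirely in the uniqueness part. The plan is to argue by contradiction: assume $R_{\delta,\rho}^{1}$ and $R_{\delta,\rho}^{2}$ are two distinct solutions of \eqref{eq:min_discrete_epsilon}, both lying in $\mathbb{Q}(r_{\delta,\rho})$, and derive a contradiction from the first-order optimality conditions together with the coercivity \eqref{eq:Dcoercive}.

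\textbf{Key steps.} First I would record the first-order optimality conditions for \eqref{eq:min_discrete_epsilon} at each $R_{\delta,\rho}^{i}$: since each $R_{\delta,\rho}^{i}$ minimizes $\mathcal{D}_{\delta,\rho}$ over the convex set $\mathbb{Q}(r_{\delta,\rho})$, we have $(\mathcal{D}_{\delta,\rho}'(R_{\delta,\rho}^{i}), Q - R_{\delta,\rho}^{i})_{L^2(\Omega)} \geq 0$ for all $Q \in \mathbb{Q}(r_{\delta,\rho})$. I would need to check that the constraint $\|Q - r_{\delta,\rho}\|_{L^2(\Omega)} \leq \epsilon$ is inactive at the solutions — this is where $h_{\T}$ small enters, exactly as in the convergence argument of Theorem \ref{convergence1} and the analogous reasoning in section \ref{subsec:convergence} — so that the variational inequality in fact holds against all $Q \in \mathbb{Q}_{ad}(\T)$, or at least that $R_{\delta,\rho}^{1}-R_{\delta,\rho}^{2}$ is an admissible test direction in both inequalities. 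Testing the inequality at $R_{\delta,\rho}^{1}$ with $Q = R_{\delta,\rho}^{2}$ and at $R_{\delta,\rho}^{2}$ with $Q = R_{\delta,\rho}^{1}$ and adding gives
\[
\left( \mathcal{D}_{\delta,\rho}'(R_{\delta,\rho}^{1}) - \mathcal{D}_{\delta,\rho}'(R_{\delta,\rho}^{2}),\, R_{\delta,\rho}^{1} - R_{\delta,\rho}^{2} \right)_{L^2(\Omega)} \leq 0.
\]
Next I would write the left-hand side, via the fundamental theorem of calculus applied to $t \mapsto \mathcal{D}_{\delta,\rho}'(R_{\delta,\rho}^{2} + t(R_{\delta,\rho}^{1}-R_{\delta,\rho}^{2}))$, as $\int_0^1 \mathcal{D}_{\delta,\rho}''(\hat q_t)(R_{\delta,\rho}^{1}-R_{\delta,\rho}^{2}, R_{\delta,\rho}^{1}-R_{\delta,\rho}^{2})\,\mathrm{d}t$, where $\hat q_t := R_{\delta,\rho}^{2} + t(R_{\delta,\rho}^{1}-R_{\delta,\rho}^{2})$ satisfies $\|\hat q_t - r_{\delta,\rho}\|_{L^2(\Omega)} \leq \epsilon$ by convexity of the ball. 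Applying the local coercivity estimate \eqref{eq:Dcoercive} pointwise in $t$ then yields
\[
\frac{\theta}{4}\| R_{\delta,\rho}^{1} - R_{\delta,\rho}^{2}\|^2_{L^2(\Omega)} \leq \left( \mathcal{D}_{\delta,\rho}'(R_{\delta,\rho}^{1}) - \mathcal{D}_{\delta,\rho}'(R_{\delta,\rho}^{2}),\, R_{\delta,\rho}^{1} - R_{\delta,\rho}^{2} \right)_{L^2(\Omega)} \leq 0,
\]
forcing $R_{\delta,\rho}^{1} = R_{\delta,\rho}^{2}$, which is the desired uniqueness. I would also note that $\mathcal{D}_{\delta,\rho}$ is smooth (the discrete coefficient-to-solution operator $F$ inherits the infinite Fréchet differentiability of the state map, by the arguments of Lemma \ref{higher-order-dif.} transferred to the finite-dimensional setting), so the second-derivative manipulation is legitimate.

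\textbf{Main obstacle.} The one genuinely delicate point is justifying that the $L^2$-ball constraint defining $\mathbb{Q}(r_{\delta,\rho})$ is inactive at any solution $R_{\delta,\rho}$, so that the unconstrained-type variational inequality (over all of $\mathbb{Q}_{ad}(\T)$) is available and $R_{\delta,\rho}^{1}-R_{\delta,\rho}^{2}$ is a legitimate test direction; without this one only gets the inequality against $Q$ in the ball, and the subtraction-of-optimality-conditions trick still works provided both competitors lie in the (convex) ball, which they do by hypothesis — so in fact the cleaner route is to simply observe that $\mathbb{Q}(r_{\delta,\rho})$ is itself convex, keep all test functions inside it, and never leave the ball at all. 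Under that reading the only role of "$h_{\T}$ sufficiently small" is to guarantee (as in the existence lemma) that $\mathbb{Q}(r_{\delta,\rho})\neq\emptyset$ and that $\epsilon$ from Lemma \ref{le:Dcoercive} is compatible; the coercivity estimate \eqref{eq:Dcoercive} then does all the remaining work, and uniqueness is immediate. The proof can therefore be quite short, essentially: "convexity of $\mathbb{Q}(r_{\delta,\rho})$ + the optimality inequalities + \eqref{eq:Dcoercive} + the mean-value identity for $\mathcal{D}_{\delta,\rho}''$."
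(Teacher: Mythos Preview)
Your proposal is correct and follows essentially the same route as the paper: assume two minimizers in the convex set $\mathbb{Q}(r_{\delta,\rho})$, use first-order optimality together with the local coercivity \eqref{eq:Dcoercive} along the segment joining them, and conclude $R_{\delta,\rho}^{1}=R_{\delta,\rho}^{2}$. The only cosmetic difference is that the paper uses a single Taylor expansion of the functional value, $\mathcal{D}_{\delta,\rho}(R_1)=\mathcal{D}_{\delta,\rho}(R_2)+\mathcal{D}_{\delta,\rho}'(R_2)(R_1-R_2)+\tfrac{1}{2}\mathcal{D}_{\delta,\rho}''(\widetilde R)(R_1-R_2,R_1-R_2)$, and one optimality inequality, whereas you subtract the two variational inequalities and integrate the second derivative; both yield the same contradiction, and your observation that the ball constraint need not be shown inactive (convexity of $\mathbb{Q}(r_{\delta,\rho})$ suffices) is exactly right.
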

\begin{proof}
Let us assume that problem \eqref{eq:min_discrete_epsilon} admits two solutions $R_1$ and $R_2$ in  $\mathbb{Q}(r_{\delta,\rho})$ which are such that $R_1 \neq R_2$. The differentiability properties of  $\mathcal{D}_{\delta,\rho}$ yield
\[
\mathcal{D}_{\delta,\rho}(R_1) = \mathcal{D}_{\delta,\rho}(R_2) +  \mathcal{D}_{\delta,\rho}'(R_2)(R_1 - R_2) +  \frac{1}{2}\mathcal{D}_{\delta,\rho}''(\widetilde R)(R_1 - R_2,R_1 - R_2),
\]
where $\widetilde R = \zeta R_1 + (1-\zeta)R_2$ and $\zeta \in [0,1]$.  Notice that $ \| \tilde R - r_{\delta,\rho} \|_{L^2(\Omega)} 
\leq \epsilon$. 

Since $R_2$ solves \eqref{eq:min_discrete_epsilon}, then $ \mathcal{D}_{\delta,\rho}'(R_2)(R_1 - R_2) \geq 0$. This and an application of the second order optimality condition \eqref{eq:Dcoercive} allow us to conclude that
\[
\mathcal{D}_{\delta,\rho}(R_1) \geq \mathcal{D}_{\delta,\rho}(R_2) + \frac{\theta}{8} \| R_1 - R_2 \|^2_{L^2(\Omega)},
\]
which immediately yields $R_1 \equiv R_2$. This, that is a contradiction with the fact that $R_1 \neq R_2$, concludes the proof.
\end{proof}

In the following  result we show that the solution $R_{\delta,\rho}$ to problem \eqref{eq:min_discrete_epsilon} is a local solution to problem \eqref{eq:min_discrete}. To accomplish this task, we follow arguments elaborated in \cite{ECasas_FTroeltzsch_2002a,MR2536007}.

\begin{lemma}[$R_{\delta,\rho}$ solves problem \eqref{eq:min_discrete}]
Let $\epsilon >0$ be sufficiently small such that \eqref{eq:Dcoercive} holds for $\hat q  \in \mathbb{Q}(r_{\delta,\rho})$ and $q \in L^{\infty}(\Omega)$. Let $R_{\delta,\rho}$ be a solution of \eqref{eq:min_discrete_epsilon} such that $R_{\delta,\rho} \rightarrow r_{\delta,\rho}$ as $h \rightarrow 0$ in $L^2(\Omega)$. If $h_{\T}$ is sufficiently small, then $R_{\delta,\rho}$ is a solution of problem \eqref{eq:min_discrete}.
\label{lm:Rsolves}
\end{lemma}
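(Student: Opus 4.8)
The strategy is the standard ``the extra ball constraint is inactive'' argument: we show that, for $h_{\T}$ small enough, $R_{\delta,\rho}$ lies strictly in the interior (with respect to the $L^2(\Omega)$--metric) of the admissible set $\mathbb{Q}(r_{\delta,\rho})$ appearing in \eqref{eq:min_discrete_epsilon}, so that its local optimality for the restricted problem \eqref{eq:min_discrete_epsilon} automatically upgrades to local optimality for the unrestricted discrete problem \eqref{eq:min_discrete}.

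First, I would invoke the hypothesis that $R_{\delta,\rho} \to r_{\delta,\rho}$ in $L^2(\Omega)$ as $h_{\T} \to 0$ to fix $h_{\T}$ small enough that $\| R_{\delta,\rho} - r_{\delta,\rho} \|_{L^2(\Omega)} < \epsilon$, with \emph{strict} inequality, and set $\eta := \epsilon - \| R_{\delta,\rho} - r_{\delta,\rho} \|_{L^2(\Omega)} > 0$. Then, for every $Q \in \mathbb{Q}_{ad}(\T)$ with $\| Q - R_{\delta,\rho} \|_{L^2(\Omega)} \leq \eta$, the triangle inequality gives $\| Q - r_{\delta,\rho} \|_{L^2(\Omega)} \leq \| Q - R_{\delta,\rho} \|_{L^2(\Omega)} + \| R_{\delta,\rho} - r_{\delta,\rho} \|_{L^2(\Omega)} \leq \eta + \| R_{\delta,\rho} - r_{\delta,\rho} \|_{L^2(\Omega)} = \epsilon$, so that $Q \in \mathbb{Q}(r_{\delta,\rho})$. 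Since $R_{\delta,\rho}$ solves \eqref{eq:min_discrete_epsilon}, we infer $\mathcal{D}_{\delta,\rho}(R_{\delta,\rho}) \leq \mathcal{D}_{\delta,\rho}(Q)$ for all such $Q$, which is precisely the assertion that $R_{\delta,\rho}$ is a local solution of \eqref{eq:min_discrete} in the sense of Definition \ref{def:local_solution}.

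The argument is essentially elementary; the only point requiring care is the provenance of the convergence $R_{\delta,\rho} \to r_{\delta,\rho}$ in $L^2(\Omega)$, which is taken as a hypothesis here but is in fact delivered by the well--posedness of \eqref{eq:min_discrete_epsilon} (Lemma \ref{le:uniqueness}) together with the coercivity of $\mathcal{D}_{\delta,\rho}''$ near $r_{\delta,\rho}$ (Lemma \ref{le:Dcoercive}) and the density property $\pi_{x'} r_{\delta,\rho} \to r_{\delta,\rho}$; one must also choose $\epsilon$ compatibly with Lemma \ref{le:uniqueness} so that \eqref{eq:min_discrete_epsilon} has a (unique) solution to begin with. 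Beyond this bookkeeping I do not expect any genuine obstacle. For brevity I would keep the exposition short and refer to \cite{ECasas_FTroeltzsch_2002a,MR2536007} for the analogous reasoning in the PDE--constrained optimization setting.
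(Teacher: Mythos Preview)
Your proposal is correct and follows essentially the same route as the paper: both arguments show that the ball constraint in \eqref{eq:min_discrete_epsilon} is inactive by combining the convergence $R_{\delta,\rho}\to r_{\delta,\rho}$ with the triangle inequality, so that local optimality on $\mathbb{Q}(r_{\delta,\rho})$ transfers to local optimality on $\mathbb{Q}_{ad}(\T)$. The only cosmetic difference is that the paper fixes the local radius at $\epsilon/2$ (requiring $\|R_{\delta,\rho}-r_{\delta,\rho}\|_{L^2(\Omega)}\le \epsilon/2$), whereas you take the data-dependent radius $\eta=\epsilon-\|R_{\delta,\rho}-r_{\delta,\rho}\|_{L^2(\Omega)}$; neither the coercivity \eqref{eq:Dcoercive} nor Lemma~\ref{le:uniqueness} is actually used in this step.
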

\begin{proof}
Since  $R_{\delta,\rho}$ solves \eqref{eq:min_discrete_epsilon}, we have that
\begin{equation}
\label{eq:step_aux_R}
\mathcal{D}_{\delta,\rho}(R_{\delta,\rho}) \leq \mathcal{D}_{\delta,\rho}(Q)  \quad \forall Q  \in \mathbb{Q}_{ad}(\T): \| Q - r_{\delta,\rho} \|_{L^2(\Omega)} \leq \epsilon.
\end{equation}
Let $Q \in \mathbb{Q}_{ad}(\T)$ such that $\| Q - R_{\delta,\rho} \|_{L^2(\Omega)} \leq \epsilon/2$. Then, since $R_{\delta,\rho} \rightarrow r_{\delta,\rho}$ as $h_{\T} \rightarrow 0$ in $L^2(\Omega)$, we have, for $h_{\T}$ sufficiently small, that
\[
\| Q - r_{\delta,\rho} \|_{L^2(\Omega)} \leq \| Q - R_{\delta,\rho} \|_{L^2(\Omega)}  + \| R_{\delta,\rho} -  r_{\delta,\rho}\|_{L^2(\Omega)} \leq \epsilon/2 + \epsilon/2 = \epsilon.
\]
Consequently, if $Q \in \mathbb{Q}_{ad}(\T)$ is such that $\| Q - R_{\delta,\rho} \|_{L^2(\Omega)}  \leq \epsilon/2$, then $\| Q - r_{\delta,\rho} \|_{L^2(\Omega)} \leq \epsilon$. In view of \eqref{eq:step_aux_R}, we can thus conclude that
\[
\mathcal{D}_{\delta,\rho}(R_{\delta,\rho}) \leq \mathcal{D}_{\delta,\rho}(Q) \quad \forall Q  \in \mathbb{Q}_{ad}(\T): \| Q - R_{\delta,\rho} \|_{L^2(\Omega)} \leq \epsilon/2.
\]
This proves that $R_{\delta,\rho}$ solves \eqref{eq:min_discrete} and concludes the proof.
\end{proof}

We now derive an a priori error estimate for our fully discrete scheme.

\begin{theorem}[a priori error estimate]
Let $r_{\delta,\rho}$ be a local solution for problem \eqref{eq:min_truncated}. Let $\epsilon$ and $h_{\T}$ sufficiently small such that the result of Lemma \ref{le:uniqueness} hold. If $q^{*} \in H^1(\Omega)$,
we thus have the following error estimate
\[
  \| r_{\delta,\rho} - R_{\delta,\rho} \|_{L^2(\Omega)} \lesssim h_{\T},
 \]
 where the hidden constant is independent of $r_{\delta,\rho}$, $R_{\delta,\rho}$, and $h_{\T}$.
 \label{rhm:error_estimate}
\end{theorem}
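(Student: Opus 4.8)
The plan is to exploit the local coercivity of $\mathcal{D}_{\delta,\rho}''$ established in Lemma \ref{le:Dcoercive}, exactly as in the proof of Theorem \ref{thm:exponential_error_estimate}, together with the variational inequalities satisfied by $r_{\delta,\rho}$ and $R_{\delta,\rho}$. The starting point is a second--order Taylor expansion: since $\|\tilde q - r_{\delta,\rho}\|_{L^2(\Omega)} \leq \epsilon$ for $\tilde q = \zeta r_{\delta,\rho} + (1-\zeta) R_{\delta,\rho}$, apply \eqref{eq:Dcoercive} with $q = R_{\delta,\rho} - r_{\delta,\rho}$ and $\hat q = \tilde q$ to obtain
\[
\frac{\theta}{4} \| R_{\delta,\rho} - r_{\delta,\rho} \|^2_{L^2(\Omega)} \leq \mathcal{D}_{\delta,\rho}'(R_{\delta,\rho})(R_{\delta,\rho} - r_{\delta,\rho}) - \mathcal{D}_{\delta,\rho}'(r_{\delta,\rho})(R_{\delta,\rho} - r_{\delta,\rho}).
\]
Then I would insert $\mathcal{R}_{\delta,\rho}'(r_{\delta,\rho})(R_{\delta,\rho}-r_{\delta,\rho})$ as an intermediate term. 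The first--order optimality condition \eqref{eq:first_order_truncated} for $r_{\delta,\rho}$ gives $-\mathcal{R}_{\delta,\rho}'(r_{\delta,\rho})(R_{\delta,\rho}-r_{\delta,\rho}) \leq 0$ since $R_{\delta,\rho} \in \mathbb{Q}_{ad}(\T)\subset\calQ$, while the discrete optimality condition for $R_{\delta,\rho}$ (from Lemma \ref{lm:Rsolves}, which makes $R_{\delta,\rho}$ a local solution of \eqref{eq:min_discrete}) gives $\mathcal{D}_{\delta,\rho}'(R_{\delta,\rho})(Q - R_{\delta,\rho}) \geq 0$ for admissible discrete $Q$; choosing $Q = \pi_{x'} r_{\delta,\rho}$ we can replace $R_{\delta,\rho} - r_{\delta,\rho}$ by $R_{\delta,\rho} - \pi_{x'} r_{\delta,\rho}$ up to a term controlled by $\| r_{\delta,\rho} - \pi_{x'} r_{\delta,\rho}\|_{L^2(\Omega)}$.

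Collecting these, the estimate reduces to controlling $\big(\mathcal{R}_{\delta,\rho}'(r_{\delta,\rho}) - \mathcal{D}_{\delta,\rho}'(r_{\delta,\rho}), R_{\delta,\rho} - r_{\delta,\rho}\big)_{L^2(\Omega)}$ plus the projection error terms. The projection error $\| r_{\delta,\rho} - \pi_{x'} r_{\delta,\rho}\|_{L^2(\Omega)} \lesssim h_{\T} | r_{\delta,\rho}|_{H^1(\Omega)}$ follows from \eqref{eq:ortogonal_constant_estimate} together with the regularity result of Theorem \ref{thm:estimate_grad_frakp2}, which requires precisely the hypothesis $q^* \in H^1(\Omega)$. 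The genuinely discretization--dependent term $\mathcal{R}_{\delta,\rho}'(r_{\delta,\rho}) - \mathcal{D}_{\delta,\rho}'(r_{\delta,\rho})$ is expressed, via \eqref{eq:frakr} and the analogous formula for the discrete auxiliary variable, as an $L^2(\Omega)$ difference of integrals in $y$ of $H(r_{\delta,\rho}) p(r_{\delta,\rho})$ against $F(r_{\delta,\rho})$ times the discrete adjoint. Splitting $H p - F p_h = (H - F)p + F(p - p_h)$ and invoking the finite element error estimate \eqref{8-6-18f} for both the state and adjoint equations, together with the $L^\infty$ bounds on $\mathtt{h}$ and $\mathtt{t}$ assumed in section \ref{sec:truncated}, yields a bound of order $h_{\T} \|f\|_{\Ws}$. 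Absorbing the quadratic term and using Young's inequality then gives $\| r_{\delta,\rho} - R_{\delta,\rho}\|_{L^2(\Omega)} \lesssim h_{\T}$.

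The main obstacle I anticipate is the rigorous treatment of the consistency term $\mathcal{R}_{\delta,\rho}' - \mathcal{D}_{\delta,\rho}'$ at $r_{\delta,\rho}$: one must carefully set up the discrete adjoint state $p_h$ solving the Galerkin problem associated with \eqref{eq:truncated_adjoint}, verify that $\tr F(r_{\delta,\rho})$ enters the right--hand side of the discrete adjoint (so the data itself is discretized), and then prove a finite element error estimate $\|\nabla(p - p_h)\|_{L^2(y^\alpha,\C_\Y)} \lesssim h_{\T}(\|f\|_{\Ws} + \|z_\delta\|_{\ldots})$ in the spirit of \eqref{8-6-18f} but with the additional $L^2(\Omega)$--perturbation coming from $\tr F(r_{\delta,\rho}) - \tr H(r_{\delta,\rho})$; this last piece again costs $h_{\T}$ by \eqref{8-6-18f}. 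A secondary subtlety is ensuring the discrete optimality condition for $R_{\delta,\rho}$ is available, which is exactly what Lemma \ref{lm:Rsolves} provides once $h_{\T}$ is small enough; likewise the coercivity constant $\theta/4$ is uniform only for $h_{\T}$ sufficiently small, so all estimates must be stated for $h_{\T}$ below a threshold, which is already part of the hypotheses. Once these ingredients are in place the conclusion is a routine application of Young's inequality.
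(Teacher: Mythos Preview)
Your approach differs structurally from the paper's: you compare $r_{\delta,\rho}$ and $R_{\delta,\rho}$ directly via the coercivity of $\mathcal{D}_{\delta,\rho}''$, whereas the paper introduces an \emph{auxiliary} discrete minimizer $Z_{\delta,\rho}$ of the \emph{continuous} functional $\mathcal{R}_{\delta,\rho}$ over $\mathbb{Q}(r_{\delta,\rho})$, and splits $\|r_{\delta,\rho}-R_{\delta,\rho}\|\le \|r_{\delta,\rho}-Z_{\delta,\rho}\|+\|Z_{\delta,\rho}-R_{\delta,\rho}\|$. The first piece isolates the control discretization (same functional $\mathcal{R}_{\delta,\rho}$, different admissible sets), the second isolates the state/adjoint discretization (same admissible set, $\mathcal{R}_{\delta,\rho}$ versus $\mathcal{D}_{\delta,\rho}$).

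The reason this matters is a genuine gap in your rate. After exploiting the two variational inequalities you arrive at
\[
\tfrac{\theta}{4}\|R_{\delta,\rho}-r_{\delta,\rho}\|_{L^2(\Omega)}^2
\le \mathcal{D}_{\delta,\rho}'(R_{\delta,\rho})(\pi_{x'}r_{\delta,\rho}-r_{\delta,\rho})
+ \big(\mathcal{R}_{\delta,\rho}'(r_{\delta,\rho})-\mathcal{D}_{\delta,\rho}'(r_{\delta,\rho})\big)(R_{\delta,\rho}-r_{\delta,\rho}).
\]
The second term is indeed $\lesssim h_{\T}\|R_{\delta,\rho}-r_{\delta,\rho}\|_{L^2(\Omega)}$ and is harmless after Young's inequality. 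But the first term, if bounded as you indicate by $\|\mathcal{D}_{\delta,\rho}'(R_{\delta,\rho})\|_{L^2(\Omega)}\|r_{\delta,\rho}-\pi_{x'}r_{\delta,\rho}\|_{L^2(\Omega)}\lesssim h_{\T}$, yields only $\|R_{\delta,\rho}-r_{\delta,\rho}\|_{L^2(\Omega)}^2\lesssim h_{\T}$, i.e.\ a suboptimal $h_{\T}^{1/2}$ rate. To recover $h_{\T}$ you must exploit the $L^2$--orthogonality of $r_{\delta,\rho}-\pi_{x'}r_{\delta,\rho}$ to piecewise constants and pair it with $g-\pi_{x'}g$ for some $g\in H^1(\Omega)$, turning the term into a product of two $O(h_{\T})$ factors. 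The paper does exactly this, but with $\mathcal{R}_{\delta,\rho}'(Z_{\delta,\rho})$ in place of $\mathcal{D}_{\delta,\rho}'(R_{\delta,\rho})$: since $Z_{\delta,\rho}\in\mathbb{Q}(\T)$ one has $(Z_{\delta,\rho},r_{\delta,\rho}-\pi_{x'}r_{\delta,\rho})_{L^2}=0$, and the remaining pieces involve $q^*-\pi_{x'}q^*$ and $\mathfrak{r}(Z_{\delta,\rho})-\pi_{x'}\mathfrak{r}(Z_{\delta,\rho})$, both $O(h_{\T})$ because $q^*\in H^1(\Omega)$ and $\mathfrak{r}(Z_{\delta,\rho})\in H^1(\Omega)$ via the argument of Theorem~\ref{thm:estimate_grad_frakp2}. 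In your direct route the analogous step would require $H^1(\Omega)$--regularity, uniform in $h_{\T}$, of the \emph{discrete} gradient $\mathfrak{d}(R_{\delta,\rho})=-d_s^{-1}\int_0^{\Y}y^{\alpha}F(R_{\delta,\rho})\,p_h(R_{\delta,\rho})\,\diff y$, which you neither state nor prove. Introducing $Z_{\delta,\rho}$ is precisely what lets the paper run the orthogonality argument against a continuous quantity with established regularity.
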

\begin{proof}
We proceed in several steps.

\fbox{1} We begin this step by choosing $\epsilon >0$ sufficiently small such that, for $\hat q \in \mathcal{Q}$ that lies in the neighborhood $\|\hat q - r_{\delta,\rho} \|_{L^2(\Omega)} \leq \epsilon$, we have that
\begin{equation}
\label{eq:second_order_R}
  \mathcal{R}_{\delta,\rho}''(\hat q)(q,q) \geq \frac{\theta}{2} \| q \|^2_{L^2(\Omega)} \quad \forall q \in L^{\infty}(\Omega),
\end{equation}
and, for $\hat Q \in \mathbb{Q}_{ad}(\T)$ in the neighborhood $\|\hat Q - r_{\delta,\rho} \|_{L^2(\Omega)} \leq \epsilon$, we have that
\begin{equation}
\label{eq:second_order_D}
 \mathcal{D}_{\delta,\rho}''(\hat Q)(q,q) \geq \frac{\theta}{4} \| q \|_{L^2(\Omega)}^2 \quad \forall q \in L^{\infty}(\Omega).
\end{equation}

Let us now introduce the following optimization problem: 
\begin{equation}
\label{eq:min_discrete_truncated_epsilon}
\min_{Q \in \mathbb{Q}(r_{\delta,\rho})} \mathcal{R}_{\delta,\rho}(Q).
\end{equation}
We recall that $\mathbb{Q}(r_{\delta,\rho}):= \{ Q \in \mathbb{Q}_{ad}(\T) : \| Q - r_{\delta,\rho}\|_{L^2(\Omega)} \leq \epsilon \}$ and that $\mathcal{R}_{\delta,\rho}$ is defined in \eqref{eq:functional_truncated}. Notice that, if $h_{\T}$ is sufficiently small, then \eqref{eq:min_discrete_truncated_epsilon} has a unique solution $Z_{\delta,\rho}$.

A basic application of the triangle inequality allows us to arrive at
\begin{equation}
\| r_{\delta,\rho} - R_{\delta,\rho} \|_{L^2(\Omega)} \leq \| r_{\delta,\rho} -Z_{\delta,\rho}  \|_{L^2(\Omega)} + \|Z_{\delta,\rho} - R_{\delta,\rho} \|_{L^2(\Omega)}.
\label{eq:first_split}
\end{equation}

\fbox{2} We proceed to estimate the term $\| r_{\delta,\rho} -Z_{\delta,\rho}  \|_{L^2(\Omega)}$. To accomplish this task, we set, for $\zeta \in [0,1]$, $\hat q = \zeta  r_{\delta,\rho} + (1-\zeta) Z_{\delta,\rho}$ and notice that 
\[
\|\hat q - r_{\delta,\rho} \|_{L^2(\Omega)} = (1 - \zeta) \| r_{\delta,\rho} - Z_{\delta,\rho} \|_{L^2(\Omega)} \leq (1 - \zeta)\epsilon \leq \epsilon.
\]
We can thus invoke \eqref{eq:second_order_R} with $q = r_{\delta,\rho} -Z_{\delta,\rho}$ and obtain that
\begin{align*}
& \frac{\theta}{2} \|  r_{\delta,\rho} -Z_{\delta,\rho} \|^2_{L^2(\Omega)} \leq  \mathcal{R}_{\delta,\rho}''(\hat q)( r_{\delta,\rho} -Z_{\delta,\rho}, r_{\delta,\rho} -Z_{\delta,\rho})
\\
& = \mathcal{R}_{\delta,\rho}'( r_{\delta,\rho})( r_{\delta,\rho} -Z_{\delta,\rho}) - \mathcal{R}_{\delta,\rho}'( Z_{\delta,\rho})( r_{\delta,\rho} -Z_{\delta,\rho}) 
\\
& = \mathcal{R}_{\delta,\rho}'( r_{\delta,\rho})( r_{\delta,\rho} -Z_{\delta,\rho}) - \mathcal{R}_{\delta,\rho}'( Z_{\delta,\rho})( r_{\delta,\rho} -\pi_{x'} r_{\delta,\rho}) - \mathcal{R}_{\delta,\rho}'( Z_{\delta,\rho})( \pi_{x'} r_{\delta,\rho} -Z_{\delta,\rho});
\end{align*}
$\pi_{x'}$ denotes the $L^2(\Omega)$--orthogonal projection operator introduced in \eqref{eq:ortogonal_constant}.
 
We now invoke the optimality condition \eqref{eq:first_order_truncated} to arrive at $\mathcal{R}_{\delta,\rho}'( r_{\delta,\rho})( r_{\delta,\rho} -Z_{\delta,\rho}) \leq 0$. On the other hand, the optimality condition for problem \eqref{eq:min_discrete_truncated_epsilon}, for $h_{\T}$ sufficiently small, yields  $- \mathcal{R}_{\delta,\rho}'( Z_{\delta,\rho})( \pi_{x'} r_{\delta,\rho} -Z_{\delta,\rho}) \leq 0$. Thus,
\begin{equation*}
\frac{\theta}{2} \|  r_{\delta,\rho} -Z_{\delta,\rho} \|^2_{L^2(\Omega)} \leq - \mathcal{R}_{\delta,\rho}'( Z_{\delta,\rho})( r_{\delta,\rho} -\pi_{x'} r_{\delta,\rho}).
\end{equation*}
Notice that $ \mathcal{R}_{\delta,\rho}'( Z_{\delta,\rho})( r_{\delta,\rho} -\pi_{x'} r_{\delta,\rho}) =  (\mathfrak{r}(Z_{\delta,\rho}) + \rho(Z_{\delta,\rho} -q^*), r_{\delta,\rho} -\pi_{x'} r_{\delta,\rho})_{L^2(\Omega)}$. On the other hand, since $Z_{\delta,\rho} \in \mathbb{Q}(\T)$, \eqref{eq:ortogonal_constant} yields $ \rho(Z_{\delta,\rho} , r_{\delta,\rho} -\pi_{x'} r_{\delta,\rho})_{L^2(\Omega)} = 0$. We can thus use \eqref{eq:ortogonal_constant}, again, to arrive at the estimate
\begin{multline*}
\frac{\theta}{2} \|  r_{\delta,\rho} -Z_{\delta,\rho} \|^2_{L^2(\Omega)} \leq \rho( q^*-\pi_{x'}q^*, r_{\delta,\rho} -\pi_{x'} r_{\delta,\rho})_{L^2(\Omega)} 
\\
- ( \mathfrak{r}(Z_{\delta,\rho}) -\pi_{x'}\mathfrak{r}(Z_{\delta,\rho}) , r_{\delta,\rho} -\pi_{x'} r_{\delta,\rho})_{L^2(\Omega)}.
\end{multline*}
Consequently,
\begin{multline*}
\|  r_{\delta,\rho} - Z_{\delta,\rho} \|^2_{L^2(\Omega)} 
\lesssim 
\|   r_{\delta,\rho} -\pi_{x'} r_{\delta,\rho} \|^2_{L^2(\Omega)}
+
\|    \mathfrak{r}(Z_{\delta,\rho}) -\pi_{x'}\mathfrak{r}(Z_{\delta,\rho}) \|^2_{L^2(\Omega)}
\\
+\|  q^*-\pi_{x'}q^* \|^2_{L^2(\Omega)}
\lesssim
h_{\T}^2 \left(  \| \nabla_{x'} q^*\|^2_{L^2(\Omega)} +  \| \nabla_{x'}  r_{\delta,\rho} \|^2_{L^2(\Omega)} + \| \nabla_{x'}  \mathfrak{r}(Z_{\delta,\rho}) \|^2_{L^2(\Omega)} \right).
\end{multline*}
Notice that, in view of the fact that $q^* \in H^1(\Omega)$, the regularity results of Theorem \ref{thm:estimate_grad_frakp2} allow us to conclude that the norms involved in the right--hand side of the previous expression are uniformly bounded.

\fbox{3} We now estimate $\|Z_{\delta,\rho} - R_{\delta,\rho} \|_{L^2(\Omega)}$ in \eqref{eq:first_split}. To accomplish this task, we set
\[
\hat Q = \zeta R_{\delta,\rho} + (1-\zeta) Z_{\delta,\rho}, \quad \zeta \in [0,1],
\]
and notice that $\|\hat Q - r_{\delta,\rho}\|_{L^2(\Omega)} \leq \zeta \| R_{\delta,\rho} - r_{\delta,\rho}\|_{L^2(\Omega)} +(1- \zeta) \| Z_{\delta,\rho} - r_{\delta,\rho}\|_{L^2(\Omega)} \leq \epsilon$. We can thus invoke the second order optimality condition \eqref{eq:second_order_D} with $q = Z_{\delta,\rho} - R_{\delta,\rho}$ to arrive at
\begin{align*}
& \frac{\theta}{4} \| R_{\delta,\rho} - Z_{\delta,\rho}\|^2_{L^2(\Omega)} \leq  \mathcal{D}_{\delta,\rho}''(\hat Q)( R_{\delta,\rho} - Z_{\delta,\rho},  R_{\delta,\rho} - Z_{\delta,\rho})
\\
& = \mathcal{D}_{\delta,\rho}'(R_{\delta,\rho})( R_{\delta,\rho} -Z_{\delta,\rho}) - \mathcal{D}_{\delta,\rho}'( Z_{\delta,\rho})( R_{\delta,\rho} -Z_{\delta,\rho}).
\end{align*}
We now invoke the first--order optimality condition for problem \eqref{eq:min_discrete} and conclude that $\mathcal{D}_{\delta,\rho}'( R_{\delta,\rho})( R_{\delta,\rho} -Z_{\delta,\rho}) \leq 0$. On the other hand, the first--order optimality condition for \eqref{eq:min_discrete_truncated_epsilon} yields  $\mathcal{R}_{\delta,\rho}'( Z_{\delta,\rho})( R_{\delta,\rho} -Z_{\delta,\rho}) \geq 0$. Thus,
\begin{align*}
 \frac{\theta}{4} \| R_{\delta,\rho} - Z_{\delta,\rho}\|^2_{L^2(\Omega)} 
 \leq 
\mathcal{R}_{\delta,\rho}'( Z_{\delta,\rho})( R_{\delta,\rho} -Z_{\delta,\rho}) - \mathcal{D}_{\delta,\rho}'( Z_{\delta,\rho})( R_{\delta,\rho} -Z_{\delta,\rho}).
\end{align*}
Consequently, we can obtain that $ \| R_{\delta,\rho} - Z_{\delta,\rho}\|_{L^2(\Omega)} \lesssim h_{\T}$.

\fbox{4} The assertion follows from collecting all the estimates we obtained in previous steps.
\end{proof}

We conclude with the following result.

\begin{corollary}[a priori error estimates]
Let $r_{\delta,\rho}$ be a local solution for problem \eqref{eq:min_truncated}. Let $\epsilon$ 
and $h_{\T}$ sufficiently small such that the result of Lemma \ref{le:uniqueness} hold. 
If $q^{*} \in H^1(\Omega)$,
we thus have the existence of a sequence $\{ Q_{\delta,\rho} \}$ of local minima of \eqref{eq:min_discrete} such that
\[
  \| r_{\delta,\rho} - Q_{\delta,\rho} \|_{L^2(\Omega)} \lesssim h_{\T},
 \]
 where the hidden constant is independent of $r_{\delta,\rho}$, $Q_{\delta,\rho}$, and $h_{\T}$.
\end{corollary}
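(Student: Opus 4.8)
The plan is to identify the sought sequence with the unique solutions $R_{\delta,\rho}$ of the auxiliary constrained problem \eqref{eq:min_discrete_epsilon} and then to chain together, in this order, Lemma \ref{le:uniqueness}, Theorem \ref{rhm:error_estimate}, and Lemma \ref{lm:Rsolves}.

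First I would fix $\epsilon$ and $h_{\T}$ small enough that Lemma \ref{le:uniqueness} applies, so that \eqref{eq:min_discrete_epsilon} admits a unique solution $R_{\delta,\rho}$, and set $Q_{\delta,\rho} := R_{\delta,\rho}$. Since $q^{*} \in H^1(\Omega)$, Theorem \ref{rhm:error_estimate} is applicable and furnishes
\[
\| r_{\delta,\rho} - Q_{\delta,\rho} \|_{L^2(\Omega)} \lesssim h_{\T},
\]
with a hidden constant independent of $r_{\delta,\rho}$, $Q_{\delta,\rho}$, and $h_{\T}$; this is already the asserted bound.

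Next I would upgrade $R_{\delta,\rho}$ from a minimizer of the constrained problem \eqref{eq:min_discrete_epsilon} to a local minimizer of the fully discrete problem \eqref{eq:min_discrete}. The estimate just recorded shows that $Q_{\delta,\rho} = R_{\delta,\rho} \to r_{\delta,\rho}$ in $L^2(\Omega)$ as $h_{\T} \downarrow 0$, which is exactly the hypothesis required by Lemma \ref{lm:Rsolves}. That lemma then gives, for $h_{\T}$ sufficiently small, that $R_{\delta,\rho}$ solves \eqref{eq:min_discrete}; hence $\{ Q_{\delta,\rho} \}$ is a sequence of local minima of \eqref{eq:min_discrete} satisfying the claimed rate, which concludes the proof.

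I do not expect a genuine obstacle here: the corollary is essentially a repackaging of Theorem \ref{rhm:error_estimate}, Lemma \ref{le:uniqueness}, and Lemma \ref{lm:Rsolves}. The only point requiring care is the logical order of the argument — one must first extract the bound $\lesssim h_{\T}$ from Theorem \ref{rhm:error_estimate} in order to secure the $L^2(\Omega)$–convergence $R_{\delta,\rho} \to r_{\delta,\rho}$ that Lemma \ref{lm:Rsolves} takes as a hypothesis, and only then conclude that the $R_{\delta,\rho}$ are local minima of the unconstrained discrete problem.
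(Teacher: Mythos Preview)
Your proposal is correct and uses exactly the same two ingredients as the paper's proof, namely Lemma \ref{lm:Rsolves} and Theorem \ref{rhm:error_estimate}, with $Q_{\delta,\rho}:=R_{\delta,\rho}$. The paper's proof is terser and applies them in the opposite order (first Lemma \ref{lm:Rsolves}, then Theorem \ref{rhm:error_estimate}); your observation that the convergence hypothesis of Lemma \ref{lm:Rsolves} is supplied by the estimate of Theorem \ref{rhm:error_estimate} makes the logical dependency explicit, but the argument is otherwise identical.
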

\begin{proof}
The results of Lemma \ref{lm:Rsolves} show that $R_{\delta,\rho}$ solves \eqref{eq:min_discrete}. The desired error estimate thus follows from Theorem \ref{rhm:error_estimate}.
\end{proof}

\begin{remark}[extensions]\rm
\label{rm:extensions}
We discuss a few extensions of this work.

\begin{itemize}
\item \emph{Observations on a subdomain.} Let us consider the case where 
observations of the exact solution $u^{\dag}$ are only available in a subset $\Omega_{\textrm{obs}}$ of  $\Omega$: $z_\delta \in L^2(\Omega_{\textrm{obs}})$. Let us consider a suitable $L^2(\Omega)$-extension of $z_{\delta}$:
\[
\widehat{z}_\delta := 
\begin{cases}
z_\delta &\quad\mbox{in}\quad \Omega_{\textrm{obs}},\\
u^* &\quad\mbox{in}\quad \Omega\setminus \Omega_{\textrm{obs}}
\end{cases},
\]
where $u^*$ is an a priori estimate of the data. With this extension at hand, we can write the following extended minimization problem:
\begin{equation}\label{10-9-18ct1}
\min_{q\in \mathcal{Q}}\widehat{\calJ}_{\delta,\rho}(q),
\quad
\widehat{\calJ}_{\delta,\rho}(q):=
\frac{1}{2} \| \tr E(q) - \widehat{z}_\delta \|^2_{L^2(\Omega)} + \frac{\rho}{2} \| q - q^* \|_{L^2(\Omega)}^2.
\end{equation}
It can be proved that this problem attains a solution $\widehat{q}_{\delta,\rho}$, as in the case with full observations, which is also a minimizer of the original problem
\begin{equation}\label{10-9-18ct2}
\min_{q\in \mathcal{Q}} \widehat{J}_{\delta,\rho}(q),
\quad
\widehat{J}_{\delta,\rho}(q):=
\frac{1}{2} \| U(q) - \widehat{z}_\delta \|_{L^2(\Omega)}^2 + \frac{\rho}{2} \| q - q^* \|_{L^2(\Omega)}^2.
\end{equation}
The design of an efficient solution technique may be of interest.


\item \emph{Boundary observations.} Boundary observations have been considered for coefficient identification problems involving simple PDEs; see \cite{hikaqu18} and references therein. If $\mathcal{L}$ is supplemented with Neumann boundary conditions, the arguments developed in our work could be of use for exploring identification problems when boundary measurements are considered.
\end{itemize}
\end{remark}

\textbf{Acknowledgment.} We would like to thank the editor and a referee for their insightful
comments and suggestions.
\bibliographystyle{plain}
\bibliography{biblio.bib}

\def\cprime{$'$}
\begin{thebibliography}{10}

\bibitem{MR1092646}
W.~Alt.
\newblock Stability for parameter estimation in two-point boundary value
  problems.
\newblock {\em Optimization}, 22(1):99--111, 1991.

\bibitem{BMNOSS:17}
L.~Banjai, J.M. Melenk, R.H. Nochetto, E.~Ot\'arola, A.J. Salgado, and Ch.
  Schwab.
\newblock Tensor {FEM} for spectral fractional diffusion.
\newblock {\em Found. Comput. Math.}, 2018.
\newblock DOI: https://doi.org/10.1007/s10208-018-9402-3.

\bibitem{Banks:1989:ETD:575635}
H.T. Banks and K.~Kunisch.
\newblock {\em Estimation Techniques for Distributed Parameter Systems. Systems
  \& Control: Foundations \& Applications}, volume~1.
\newblock BirkBirkh\"a: Boston Inc., Boston, 1989.

\bibitem{benning_burger_2018}
M.~Benning and M.~Burger.
\newblock Modern regularization methods for inverse problems.
\newblock {\em Acta Numer.}, 27:1--111, 2018.

\bibitem{BS}
M.{\v{S}}. Birman and M.Z. Solomjak.
\newblock {\em Spektralnaya teoriya samosopryazhennykh operatorov v gilbertovom
  prostranstve}.
\newblock Leningrad. Univ., Leningrad, 1980.

\bibitem{BCdPS:12}
C.~Br\"andle, E.~Colorado, A.~de~Pablo, and U.~S\'anchez.
\newblock A concave-convex elliptic problem involving the fractional
  {L}aplacian.
\newblock {\em Proc. Roy. Soc. Edinburgh Sect. A}, 143(1):39--71, 2013.

\bibitem{MR2759829}
H.~Brezis.
\newblock {\em Functional analysis, {S}obolev spaces and partial differential
  equations}.
\newblock Universitext. Springer, New York, 2011.

\bibitem{MR3469920}
C.~Bucur and E.~Valdinoci.
\newblock {\em Nonlocal diffusion and applications}, volume~20 of {\em Lecture
  Notes of the Unione Matematica Italiana}.
\newblock Springer, [Cham]; Unione Matematica Italiana, Bologna, 2016.

\bibitem{CS:11}
X.~Cabr\'e and Y.~Sire.
\newblock Nonlinear equations for fractional {L}aplacians {II}: {E}xistence,
  uniqueness, and qualitative properties of solutions.
\newblock {\em Trans. Amer. Math. Soc.}, 367(2):911--941, 2015.

\bibitem{CT:10}
X.~Cabr\'e and J.~Tan.
\newblock Positive solutions of nonlinear problems involving the square root of
  the {L}aplacian.
\newblock {\em Adv. Math.}, 224(5):2052--2093, 2010.

\bibitem{CS:07}
L.~Caffarelli and L.~Silvestre.
\newblock An extension problem related to the fractional {L}aplacian.
\newblock {\em Comm. Part. Diff. Eqs.}, 32(7-9):1245--1260, 2007.

\bibitem{MR2680400}
L.~A. Caffarelli and A.~Vasseur.
\newblock Drift diffusion equations with fractional diffusion and the
  quasi-geostrophic equation.
\newblock {\em Ann. of Math. (2)}, 171(3):1903--1930, 2010.

\bibitem{MR3489634}
L.A. Caffarelli and P.R. Stinga.
\newblock Fractional elliptic equations, {C}accioppoli estimates and
  regularity.
\newblock {\em Ann. Inst. H. Poincar\'e Anal. Non Lin\'eaire}, 33(3):767--807,
  2016.

\bibitem{CDDS:11}
A.~Capella, J.~D\'avila, L.~Dupaigne, and Y.~Sire.
\newblock Regularity of radial extremal solutions for some non-local semilinear
  equations.
\newblock {\em Comm. Partial Differential Equations}, 36(8):1353--1384, 2011.

\bibitem{ECasas_FTroeltzsch_2002a}
E.~Casas and F.~Tr{\"o}ltzsch.
\newblock Error estimates for the finite-element approximation of a semilinear
  elliptic control problem.
\newblock {\em Control Cybernet.}, 31(3):695--712, 2002.
\newblock Well-posedness in optimization and related topics (Warsaw, 2001).

\bibitem{MR2554448}
G.~Chavent.
\newblock {\em Nonlinear least squares for inverse problems. Theoretical
  Foundations and Step-by-Step Guide for Applications}.
\newblock Scientific Computation. Springer, New York, 2009.
\newblock Theoretical foundations and step-by-step guide for applications.

\bibitem{chen23}
W.~Chen.
\newblock A speculative study of $2/3$-order fractional {L}aplacian modeling of
  turbulence: Some thoughts and conjectures.
\newblock {\em Chaos}, 16(2):1--11, 2006.

\bibitem{doi:10.1121/1.1646399}
W.~Chen and S.~Holm.
\newblock Fractional laplacian time-space models for linear and nonlinear lossy
  media exhibiting arbitrary frequency power-law dependency.
\newblock {\em The Journal of the Acoustical Society of America},
  115(4):1424--1430, 2004.

\bibitem{MR852507}
F.~Colonius and K.~Kunisch.
\newblock Stability for parameter estimation in two-point boundary value
  problems.
\newblock {\em J. Reine Angew. Math.}, 370:1--29, 1986.

\bibitem{Colton}
D.~Colton and R.~Kress.
\newblock {\em Inverse Acoustic and Electromagnetic Scattering Theory}.
\newblock Vol. 93 of Applied Mathematical Sciences, Springer, 2012.

\bibitem{MR3023366}
Q.~Du, M.~Gunzburger, R.B. Lehoucq, and K.~Zhou.
\newblock Analysis and approximation of nonlocal diffusion problems with volume
  constraints.
\newblock {\em SIAM Rev.}, 54(4):667--696, 2012.

\bibitem{Javier}
J.~Duoandikoetxea.
\newblock {\em Fourier analysis}, volume~29 of {\em Graduate Studies in
  Mathematics}.
\newblock American Mathematical Society, Providence, RI, 2001.
\newblock Translated and revised from the 1995 Spanish original by David
  Cruz-Uribe.

\bibitem{MR1408680}
H.~W. Engl, M.~Hanke, and A.~Neubauer.
\newblock {\em Regularization of inverse problems}, volume 375 of {\em
  Mathematics and its Applications}.
\newblock Kluwer Academic Publishers Group, Dordrecht, 1996.

\bibitem{EKN89}
H.~W. Engl, K.~Kunisch, and A.~Neubauer.
\newblock Convergence rates for {T}ikhonov regularisation of non-linear
  ill-posed problems.
\newblock {\em Inverse Problems}, 5(4):523, 1989.

\bibitem{Guermond-Ern}
A.~Ern and J.-L. Guermond.
\newblock {\em Theory and practice of finite elements}, volume 159 of {\em
  Applied Mathematical Sciences}.
\newblock Springer-Verlag, New York, 2004.

\bibitem{FKS:82}
E.B. Fabes, C.E. Kenig, and R.P. Serapioni.
\newblock The local regularity of solutions of degenerate elliptic equations.
\newblock {\em Comm. Part. Diff. Eqs.}, 7(1):77--116, 1982.

\bibitem{Folland}
G.B. Folland.
\newblock {\em Real analysis}.
\newblock Pure and Applied Mathematics (New York). John Wiley \& Sons Inc., New
  York, second edition, 1999.
\newblock Modern techniques and their applications, A Wiley-Interscience
  Publication.

\bibitem{ShinChan}
D~Fujiwara.
\newblock Concrete characterization of the domains of fractional powers of some
  elliptic differential operators of the second order.
\newblock {\em Proc. Japan Acad.}, 43:82--86, 1967.

\bibitem{Gatto2015}
P.~Gatto and J.S. Hesthaven.
\newblock Numerical approximation of the fractional laplacian via $hp$--finite
  elements, with an application to image denoising.
\newblock {\em J. Sci. Comput.}, 65(1):249--270, 2015.

\bibitem{GT}
D.~Gilbarg and N.~S. Trudinger.
\newblock {\em Elliptic partial differential equations of second order}.
\newblock Classics in Mathematics. Springer-Verlag, Berlin, 2001.
\newblock Reprint of the 1998 edition.

\bibitem{doi:10.1137/070698592}
G.~Gilboa and S.~Osher.
\newblock Nonlocal operators with applications to image processing.
\newblock {\em Multiscale Model. Simul.}, 7(3):1005--1028, 2008.

\bibitem{GU}
V.~Gol{\cprime}dshtein and A.~Ukhlov.
\newblock Weighted {S}obolev spaces and embedding theorems.
\newblock {\em Trans. Amer. Math. Soc.}, 361(7):3829--3850, 2009.

\bibitem{Grisvard}
P.~Grisvard.
\newblock {\em Elliptic problems in nonsmooth domains}, volume~69 of {\em
  Classics in Applied Mathematics}.
\newblock Society for Industrial and Applied Mathematics (SIAM), Philadelphia,
  PA, 2011.
\newblock Reprint of the 1985 original [ MR0775683], With a foreword by Susanne
  C. Brenner.

\bibitem{0266-5611-26-12-125014}
D.~N. H\`ao and T.~N.~T. Quyen.
\newblock Convergence rates for tikhonov regularization of coefficient
  identification problems in laplace-type equations.
\newblock {\em Inverse Problems}, 26(12):125014, 2010.

\bibitem{MR2885597}
D.N. H\`ao and T.N.T. Quyen.
\newblock Convergence rates for {T}ikhonov regularization of a two-coefficient
  identification problem in an elliptic boundary value problem.
\newblock {\em Numer. Math.}, 120(1):45--77, 2012.

\bibitem{hikaqu18}
M.~Hinze, B.~Kaltenbacher, and T.~N.~T. Quyen.
\newblock Identifying conductivity in electrical impedance tomography with
  total variation regularization.
\newblock {\em Numer. Math.}, 138:723--765, 2018.

\bibitem{MR2594377}
B.~Kaltenbacher and B.~Hofmann.
\newblock Convergence rates for the iteratively regularized {G}auss-{N}ewton
  method in {B}anach spaces.
\newblock {\em Inverse Problems}, 26(3):035007, 21, 2010.

\bibitem{MR3788158}
B.~Kaltenbacher and A.~Klassen.
\newblock On convergence and convergence rates for {I}vanov and {M}orozov
  regularization and application to some parameter identification problems in
  elliptic {PDE}s.
\newblock {\em Inverse Problems}, 34(5):055008, 24, 2018.

\bibitem{Kato}
T.~Kato.
\newblock {\em Perturbation theory for linear operators}.
\newblock Classics in Mathematics. Springer-Verlag, Berlin, 1995.
\newblock Reprint of the 1980 edition.

\bibitem{KSbook}
G.~Kinderlehrer, D.and~Stampacchia.
\newblock {\em An introduction to variational inequalities and their
  applications}, volume~88 of {\em Pure and Applied Mathematics}.
\newblock Academic Press, Inc. [Harcourt Brace Jovanovich, Publishers], New
  York-London, 1980.

\bibitem{Kiselev2007}
A.~Kiselev, F.~Nazarov, and A.~Volberg.
\newblock Global well-posedness for the critical 2d dissipative
  quasi-geostrophic equation.
\newblock {\em Invent. Math.}, 167(3):445--453, Mar 2007.

\bibitem{MR2536007}
A.~Kr\"oner and B.~Vexler.
\newblock A priori error estimates for elliptic optimal control problems with a
  bilinear state equation.
\newblock {\em J. Comput. Appl. Math.}, 230(2):781--802, 2009.

\bibitem{KO84}
A.~Kufner and B.~Opic.
\newblock How to define reasonably weighted {S}obolev spaces.
\newblock {\em Comment. Math. Univ. Carolin.}, 25(3):537--554, 1984.

\bibitem{MR2064019}
S.Z. Levendorski{\u\i}.
\newblock Pricing of the {A}merican put under {L}\'evy processes.
\newblock {\em Int. J. Theor. Appl. Finance}, 7(3):303--335, 2004.

\bibitem{Mclean}
W.~McLean.
\newblock {\em Strongly elliptic systems and boundary integral equations}.
\newblock Cambridge University Press, Cambridge, 2000.

\bibitem{MR3445279}
G.~Molica~Bisci, V.D. Radulescu, and R.~Servadei.
\newblock {\em Variational methods for nonlocal fractional problems}, volume
  162 of {\em Encyclopedia of Mathematics and its Applications}.
\newblock Cambridge University Press, Cambridge, 2016.
\newblock With a foreword by Jean Mawhin.

\bibitem{Muckenhoupt}
B.~Muckenhoupt.
\newblock Weighted norm inequalities for the {H}ardy maximal function.
\newblock {\em Trans. Amer. Math. Soc.}, 165:207--226, 1972.

\bibitem{doi:10.1137/S0036139901386375}
F.~Natterer, H.~Sielschott, O.~Dorn, T.~Dierkes, and V.~Palamodov.
\newblock Fréchet derivatives for some bilinear inverse problems.
\newblock {\em SIAM Journal on Applied Mathematics}, 62(6):2092--2113, 2002.

\bibitem{MR1230716}
A.~Neubauer.
\newblock Tikhonov regularization of nonlinear ill-posed problems in {H}ilbert
  scales.
\newblock {\em Appl. Anal.}, 46(1-2):59--72, 1992.

\bibitem{NOS}
R.H. Nochetto, E.~Ot\'arola, and A.J. Salgado.
\newblock A {PDE} approach to fractional diffusion in general domains: a priori
  error analysis.
\newblock {\em Found. Comput. Math.}, 15(3):733--791, 2015.

\bibitem{NOSChina}
R.H. Nochetto, E.~Ot\'arola, and A.J. Salgado.
\newblock A {PDE} approach to numerical fractional diffusion.
\newblock In {\em Proceedings of the 8th {I}nternational {C}ongress on
  {I}ndustrial and {A}pplied {M}athematics}, pages 211--236. Higher Ed. Press,
  Beijing, 2015.

\bibitem{NOS3}
R.H. Nochetto, E.~Ot{\'a}rola, and A.J. Salgado.
\newblock A {PDE} approach to space-time fractional parabolic problems.
\newblock {\em SIAM J. Numer. Anal.}, 54(2):848--873, 2016.

\bibitem{Otarola}
E.~Ot\'arola.
\newblock {\em {A PDE approach to numerical fractional diffusion}}.
\newblock PhD thesis, University of Maryland, College Park, 2014.

\bibitem{quyen18}
T.N.T. Quyen.
\newblock Variational method for multiple parameter identification in elliptic
  pdes.
\newblock {\em J. Math. Anal. Appl.}, 461(1):676--700, 2018.

\bibitem{Ramlau03}
R.~Ramlau.
\newblock {TIGRA}--an iterative algorithm for regularizing nonlinear ill-posed
  problems.
\newblock {\em Inverse Problems}, 19(2):433, 2003.

\bibitem{MR2235638}
E.~Resmerita and O.~Scherzer.
\newblock Error estimates for non-quadratic regularization and the relation to
  enhancement.
\newblock {\em Inverse Problems}, 22(3):801--814, 2006.

\bibitem{MR3014456}
T.~Roub\'\i~\v cek.
\newblock {\em Nonlinear partial differential equations with applications},
  volume 153 of {\em International Series of Numerical Mathematics}.
\newblock Birkh\"auser/Springer Basel AG, Basel, second edition, 2013.

\bibitem{MR2963507}
T.~Schuster, B.~Kaltenbacher, B.~Hofmann, and K.S. Kazimierski.
\newblock {\em Regularization methods in {B}anach spaces}, volume~10 of {\em
  Radon Series on Computational and Applied Mathematics}.
\newblock Walter de Gruyter GmbH \& Co. KG, Berlin, 2012.

\bibitem{MR3618711}
S.A. Silling.
\newblock Why peridynamics?
\newblock In {\em Handbook of peridynamic modeling}, Adv. Appl. Math. CRC
  Press, 2017.

\bibitem{ST:10}
P.R. Stinga and J.L. Torrea.
\newblock Extension problem and {H}arnack's inequality for some fractional
  operators.
\newblock {\em Comm. Partial Differential Equations}, 35(11):2092--2122, 2010.

\bibitem{doi:10.1137/1.9780898717921}
A.~Tarantola.
\newblock {\em Inverse Problem Theory and Methods for Model Parameter
  Estimation}.
\newblock SIAM, 2005.

\bibitem{Tbook}
F.~Tr{\"o}ltzsch.
\newblock {\em Optimal control of partial differential equations}, volume 112
  of {\em Graduate Studies in Mathematics}.
\newblock American Mathematical Society, Providence, RI, 2010.
\newblock Theory, methods and applications, Translated from the 2005 German
  original by J{\"u}rgen Sprekels.

\bibitem{Turesson}
B.O. Turesson.
\newblock {\em Nonlinear potential theory and weighted {S}obolev spaces},
  volume 1736 of {\em Lecture Notes in Mathematics}.
\newblock Springer-Verlag, Berlin, 2000.

\bibitem{MR3177769}
J.L. V\'azquez.
\newblock Recent progress in the theory of nonlinear diffusion with fractional
  {L}aplacian operators.
\newblock {\em Discrete Contin. Dyn. Syst. Ser. S}, 7(4):857--885, 2014.

\end{thebibliography}
\end{document}